\def \x {\mathbf{x}}
\def \a {\mathbf{a}}
\def \t {\mathbf{t}}
\def \q {\mathbf{q}}
\def \e {\mathbf{e}}
\def \f {\mathbf{f}}
\def \del {\bm{\delta}}
\def \bO {\mathbf{O}}
\def \bX {\mathbf{X}}
\def \bD {\mathbf{D}}
\def \bI {\mathbf{I}}
\def \bJ {\mathbf{J}}
\def \bB {\mathbf{B}}
\def \bA {\mathbf{A}}
\def \bZ {\mathbf{Z}}
\def \bG {\mathbf{G}}
\def \bC {\mathbf{C}}
\def \bH {\mathbf{H}}
\def \bV {\mathbf{V}}
\def \bU {\mathbf{U}}
\def \Lam {\bm{\Lambda}}
\def \cV {\mathcal{V}}
\def \cE {\mathcal{E}}
\def \cG {\mathcal{G}}
\def \cC {\mathcal{C}}
\def \cS {\mathcal{S}}
\def \cA {\mathcal{A}}
\def \cS {\mathcal{S}}
\def \cN {\mathcal{N}}
\def \R {\mathbb{R}}
\def \O {\mathbb{O}}
\def \S {\mathbb{S}}
\def \cR {\mathcal{R}}
\def \cQ {\mathcal{Q}}
\def \cT {\mathcal{T}}
\def \cS {\mathcal{S}}
\def \cO {\mathcal{O}}
\def \cX {\mathcal{X}}
\newtheorem{theorem}{Theorem}[section]
\newtheorem{proposition}[theorem]{Proposition}
\newtheorem{definition}[theorem]{Definition}
\newtheorem{problem}[theorem]{Problem}
\newtheorem{assumption}[theorem]{Assumption}
\newtheorem{conjecture}[theorem]{Conjecture}
\begin{document}

\title{On a Registration-Based Approach to Sensor Network Localization}

\author{R.~Sanyal,
        M.~Jaiswal,
        and~K.~N.~Chaudhury

\thanks{The authors were supported by a Startup Grant from IISc Bangalore and an 
EMR Grant SERB/F/6047/2016-2017 from Department of Science and  Technology, Government of India. The second author was supported by a NBHM Postdoctoral Fellowship from Department of Atomic Energy, Government of India. Address: Department of Electrical Engineering, Indian Institute of Science, India. Correspondence: \{rajatsanyal,monika,kunal\}@ee.iisc.ernet.in.
}}

\maketitle

\begin{abstract}
We consider a registration-based approach for localizing sensor networks from range measurements. 
This is based on the assumption that one can find overlapping \textit{cliques} spanning the network. 
That is, for each sensor, one can identify geometric neighbors for which all inter-sensor ranges are known.
Such cliques can be  efficiently localized using multidimensional scaling. However, since each clique is localized in some local coordinate system, we are required to \textit{register} them in a global coordinate system. In other words, our approach is based on transforming the localization problem into a problem of registration. In this context, the main contributions are as follows. First, we describe an efficient method for partitioning the network into overlapping cliques. 
Second, we study the problem of registering the localized cliques, and formulate a necessary \textit{rigidity} condition  for uniquely recovering the global sensor coordinates. 
In particular, we present a method for efficiently testing rigidity, and a proposal for augmenting the partitioned network to enforce rigidity.
A recently proposed semidefinite relaxation of global registration is used for registering the cliques. We present simulation results on random and structured sensor networks to demonstrate that the proposed method compares favourably with state-of-the-art methods in terms of run-time, accuracy, and scalability. 
\end{abstract}

\begin{IEEEkeywords}
Sensor networks, localization, scalability, rigidity, clique, multidimensional scaling, semidefinite programming.
\end{IEEEkeywords}

\section{Introduction}

Recent developments in wireless communication and micro-electro-mechanics have proliferated the deployment of wireless sensor networks (WSN) \cite{YMG2008}.
A typical WSN may consist of few tens to thousands of nodes. Each node is a low-power device equipped with transducers, power supply, memory, processor, radio transmitter, and actuators.
A global positioning system (GPS) is often installed on some of the nodes. Such nodes are referred to as \textit{anchor} nodes.
However, only a small fraction of the nodes are equipped with GPS to minimize weight and power consumption. In this paper, we will use \textit{sensor} to specifically refer to a node that does not have a GPS, while the term \textit{node} will be used for both sensors and anchors. WSNs are mostly deployed  in remote locations, and nodes have limited memory capacity, so wireless transmitters are used to transfer the sensor data to base stations. Due to power constraints, two nodes can communicate if and only if the inter-node distance is within some \textit{radio range}, which we will denote by $r$ \cite{YMG2008}. We would like to note that although GPS modules are getting cheaper, deploying them in large scale would still be costly. Moreover, GPS comes with its own limitations \cite{YMG2008}. To calculate the position of a sensor using GPS alone, at least four line-of-sights (with satellites) are required. This might not be viable in case of bad weather. Furthermore, for underwater surveys and mining applications, it is not even feasible to have line-of-sights. In fact, in applications where the position information is crucial, localization algorithms can be used to back up GPS positioning.

To meaningfully interpret the sensor data, one requires the locations of the sensors. A central problem in this regard is to estimate the sensor locations from the inter-sensor distances and the anchor locations. This problem is referred to as sensor network localization (SNL) \cite{SY2007,MFA2007}. To set up the mathematical description of SNL, we introduce some notations that will be follow throughout the paper. Assume that we have a total of $N$ sensors  and $K$ anchors. We label the sensors using $\cS=\{1,\ldots,N\}$, the anchors using $\cA=\{N+1,\ldots,N+K\}$, and $\cN = \cS  \cup \cA$ denotes the nodes in general. Let 
\begin{equation}
\label{nodes}
\cX_s=\{\bar{\x}_i : i \in \cS\} \quad \text{and} \quad \cX_a=\{\bar{\a}_k : k \in \cA\}
\end{equation}
denote the sensor and anchor locations. We assume $\bar{\x}_i$ and $\bar{\a}_k$ to be in $\mathbb{R}^d$, where $d$ is typically $2$ or $3$ \cite{YMG2008,SY2007}.  

The distance between two nodes $i$ and $j$ (that are within the radio range $r$) can be calculated using 
different techniques, such as the received signal strength or the time of arrival \cite{MFA2007}. A \textit{measurement graph} $\cG$ is used to encode the distance information \cite{SXG2015,CLS2012}. Particularly, $\cG = (\cV,\cE)$, where $\cV(\cG)=\cN$, and  $(i,j) \in \cE(\cG)$ if and only if the distance between the $i$-th and the $j$-th node is known. The problem is to compute the unknown sensor locations $\cX_s$ from the measured distances and the anchor locations $\cX_a$. We make the standard assumption that the anchor locations are noise-free \cite{SXG2015,SL2014}.
\subsection{Optimization Algorithms}

The decision version of the SNL problem is known to be computationally intractable \cite{Saxe1979}. The presence of noise makes the problem even more challenging in practice. Nonetheless, several methods have been proposed that can compute approximate solutions. A survey of the literature on SNL is beyond the scope of this paper. Instead, we will focus on some of the recent optimization methods that are related to the present work. We refer the interested reader to \cite{MFA2007} for a survey of algorithms that are not based on optimization. 

The simplest optimization framework for SNL is that of strain minimization \cite{BLTYW2006}. In this approach, the sensor locations $\x_1, \dots , \x_N$ are obtained by minimizing the \textit{strain} function
\begin{equation}
\label{SNL_strain}
\!\sum_{(i,j) \in \cE } \! \left( \lVert \x_i - \x_j \rVert^2 - d_{ij}^2 \right)^2\! +\!  \sum_{(i,k) \in \cE} \! \left( \lVert \x_i - \a_k \rVert^2 - d_{ik}^2 \right)^2.
\end{equation}
In \eqref{SNL_strain}, the indices $i,j$ are reserved for $\cS$, and the index $k$ for $\cA$. Unfortunately, it is  difficult to compute the global minimum of \eqref{SNL_strain} since it is non-convex in the variables \cite{SY2007}. In this regard, several approximation algorithms based on convex programming have been proposed, which can provably compute the global minimum under certain conditions. Based on the computing paradigm, one can broadly classify these as centralized and distributed algorithms. 

Centralized algorithms employ a server to store the transmitted range measurements, based on which the sensor locations are computed. It was observed in \cite{DPG2001} that the distance bounds in SNL can be posed as semidefinite constraints. Later, in the seminal paper \cite{BLTYW2006}, the authors showed how \eqref{SNL_strain} can be approximated using a convex semidefinite program (SDP). The main advantage of posing SNL as a convex program is that we can find the global minimizer of the problem independent of the initialization. The flip side, however, is that standard SDP solvers (e.g., SeDuMi \cite{Sturm1999}) are memory and computation intensive, and hence cannot be scaled to large-sized problems. For example, the SDP-based algorithm in \cite{BLTYW2006} can scale only up to a few hundred nodes \cite{WZYB2008}. To improve the scalability,  a further edge-based relaxation of  \cite{BLTYW2006} was proposed in \cite{WZYB2008}. While the relaxation can indeed scale up to $8000$ nodes, its performance is nevertheless inferior to that of the original SDP for medium-sized problems. 

On the other hand, distributed algorithms divide the processing over the nodes. As a result, they exhibit better scalability compared to centralized methods. The main drawback is that they suffer from error propagation \cite{MFA2007}. Moreover, distributed methods such as the ones in \cite{SXG2015,SL2014,ADPV2012,GTSC2013} can operate only in the presence of anchors (which might not be available, e.g., in indoor WSN). The distributed algorithm in \cite{ADPV2012} that can handle million sensors without any significant communication overhead. However, the localization accuracy of this method is conditioned on a good initialization. More recently, distributed methods based on convex programming have been proposed in \cite{SXG2015,SL2014,GTSC2013}. In particular, a distributed algorithm based on the alternating direction method of multipliers was proposed in \cite{SL2014}. However, as reported in \cite{SXG2015}, the approach is computationally demanding since each node is required to solve an SDP per iteration, and also the communication overhead is significant. A distributed algorithm that is cheaper and requires a smaller communication overhead was later proposed in \cite{SXG2015}. One shortcoming of the latter method is that it requires the sensors to be in the convex hull of the anchors, which is difficult to guarantee in practice. 

The present work was motivated by a class of centralized algorithms that use divide-and-conquer approaches to improve scalability \cite{CLS2012,CKS2015b,LT2009,ZLGG2010}. The general mechanism is to partition $\cG$ into overlapping subgraphs, localize each subgraph using the induced distances, and finally register the subgraphs. The idea is to construct subgraphs that are denser than the original graph. Moreover, the smaller graphs can be efficiently localized. The algorithms essentially differ on how each subproblem is solved. For example, a Cuthill-McKee-type permutation is used  in \cite{LT2009} to partition $\cG$. In \cite{CLS2012,ZLGG2010}, $\cG$ is partitioned using neighborhood subgraphs. To improve the localization, rigid subgraphs \cite{CW2009} are extracted from each neighborhood subgraph in \cite{CLS2012}. Recently, recursive spectral clustering was used in \cite{CKS2015b}. We note that the subgraphs obtained using the graph partitioning in \cite{CKS2015b,LT2009}  are not guaranteed to be rigid \cite{Laman1970,GHT2010}, and hence can result in poor localization. In fact, a few poorly localized subgraphs can adversely affect the overall registration. The algorithms in \cite{LT2009,ZLGG2010} register the subgraphs in a sequential fashion, and inevitably suffer from error propagation. Recently, a least-square method was proposed in \cite{CKS2015a} that can register the subgraphs in a globally-consistent manner. In particular, it was demonstrated in \cite{CKS2015a} that global registration can successfully operate in adversarial situations where sequential methods fail. In this regard, we note that a \textit{lateration} condition was introduced in \cite{CKS2015a} that can guarantee exact recovery in the noise-free setting. However, there is no known efficient algorithm for testing lateration.

\subsection{Contributions}

We propose a divide-and-conquer algorithm building on the ideas in \cite{CKS2015b,CKS2015a,SC2016}. In particular, we address the following issues that emerged from this line of work: testing and ensuring that each subgraph is rigid, formulating a testable condition for recovering the sensor coordinates, and developing a scalable algorithm for registering the localized subgraphs. In this context, the contributions are as follows:

(i) To bypass the rigidity issue associated with the localization of each subgraph, we propose to use cliques. Cliques are trivially rigid and can be efficiently localized using multidimensional scaling \cite{BG2003}. However, given that finding cliques in a large graph is challenging, we first partition $\cG$ into neighborhood graphs \cite{CLS2012,ZLGG2010}. We then extract a clique from each neighborhood graph using the algorithm from \cite{BG2015}. Finally, we augment the vertices of a clique to expand it into a maximal clique. We experimentally demonstrate that the complete process is fast for both random and structured geometric graphs. 

(ii) We study the problem of registering a system of localized cliques. In particular, we establish a \textit{rigidity} condition that is necessary for recovering the original sensor coordinates. The proposed condition can be efficiently tested simply by computing the maximum flow between the vertices of an appropriate graph. We present supporting examples to conjecture that the proposed rigidity condition is also sufficient for exact recovery. Moreover, we demonstrate using numerical examples that  the registration performance can be improved in the noisy setting by enforcing the rigidity condition.

We note that a registration-based approach for anchorless SNL was earlier proposed in \cite{SC2016} that uses cliques and cMDS. In the present work, we focus on anchored SNL (though the method can also be used for anchorless SNL). Moreover, we consider a different clique exploration process. Importantly, we investigate the rigidity problem associated with registration which was not discussed in \cite{SC2016}.
\subsection{Organization}

The rest of the paper is organized as follows.  In Section \ref{RP}, we propose a rigidity criteria for the registration problem, and explain how this can be tested efficiently. The proposed graph partitioning is described in Section \ref{GP} keeping the registration problem in mind. Classical multidimensional scaling is reviewed in Section \ref{cMDS}, which is used to localize the cliques. The registration algorithm is described in Section \ref{ADMM}. Experimental results and comparisons are provided in Section \ref{exp}.
\section{The Rigidity Problem}
\label{RP}

\begin{figure*}
\center
\includegraphics[width=0.7\linewidth]{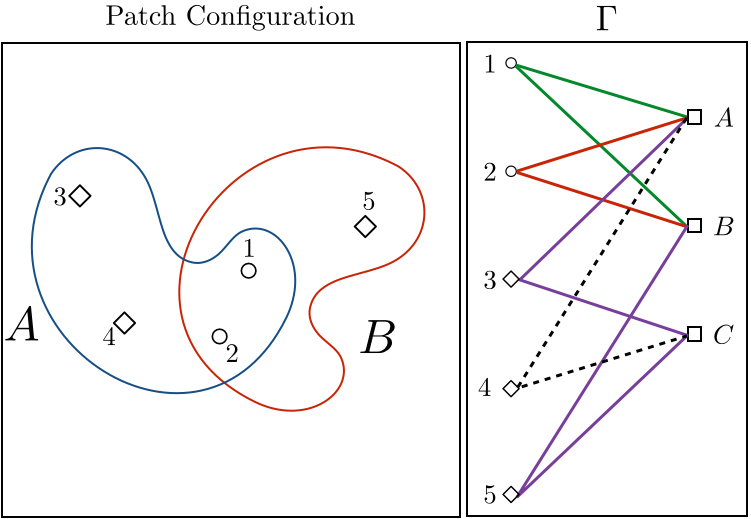}
\caption{A configuration of two patches, two sensors and three anchors (left) and its correspondence graph (right). Circles, diamonds, and squares are used to represent the sensors, anchors, and patches; note that $C$ is the anchor patch. It is clear that this configuration is rigid in two dimensions (see text for precise definitions). We have marked the edges of the three disjoint paths between patch vertices $A$ and $B$ using different colors (right). See text for comments.}
\label{PC1}
\end{figure*}

We first study the fundamental problem of \textit{rigidity} whose resolution will be useful during the graph partitioning phase in Section \ref{GP}. This problem is also relevant for other divide-and-conquer approaches \cite{CLS2012,CKS2015b,ZLGG2010,CKS2015a}, where a system of point sets are required to be registered. More precisely, consider the sensors $\cX_s$ and the anchors $\cX_a$ in \eqref{nodes}, and subsets $\cC_1,\ldots,\cC_M \subset \cN$. Following \cite{CLS2012,CKS2015b}, we will refer to each $\cC_i$ as a \textit{patch}. Moreover, we create an additional patch $\cC_{M+1}$ consisting solely of the anchors $\cA$. Assume that the points in each patch have been derived from the respective points in $\cX_s \cup \cX_a$ via a rigid transform. 
Let
\begin{equation}
 \label{LocGlob1}
\bar{\x}_k =\cR_i(\x_{k,i}) = \bO_i \x_{k,i} + \t_i  \qquad (k \in \cC_i \backslash \cA),
\end{equation}
and
\begin{equation}
 \label{LocGlob2}
\bar{\a}_l =\cR_i(\bar{\a}_l) \qquad (l \in \cC_i \cap \cA),
\end{equation}
where $\x_{k,i}$ is the coordinate of the $k$-th point in the $i$-th patch, and $\cR_i=(\bO_i,\t_i)$ is the rigid transform associated with the $i$-th patch, where the orthogonal matrix $\bO_i$ represents rotation (or reflection) and $\t_i$ is the translation component. We will refer to the $(\x_{k,i})$'s as the \textit{patch coordinates}. The patches and the patch coordinates together form a \textit{configuration}. The \textit{registration} problem is one of determining the unknown $\cX_s$ from the given configuration. 
\begin{problem}[Registration]
\label{prob1}
Find $\x_1, \dots , \x_N$ and rigid transforms $\cQ_1, \dots , \cQ_M$ such that, for $1 \leq i \leq M$,
\begin{equation*}
\x_k = \cQ_i (\x_{k,i}) \quad \text{and}  \quad \bar{\a}_l = \cQ_i (\bar{\a}_l),
\end{equation*}
where $k \in \cC_i \setminus \cA$ and $l \in \cC_i \cap \cA$.
\end{problem}
In the noiseless setting, the solution (points and transforms) sought above exists trivially, namely, the ground truth $\x_k = \bar{\x}_k$ and $\cQ_i=\cR_i$.
The rigidity problem is to determine whether the solution is unique (upto a global rigid transformation).
\begin{problem}[Uniqueness]
\label{prob2} 
Determine whether Problem \ref{prob1} have a unique solution up to a rigid transform. That is, if $\x_1, \dots , \x_N$  is a solution of Problem \ref{prob1}, then is it necessary that for some rigid transform $\cR$, $\x_k = \cR ( \bar{\x}_k)$ and $\bar{\a}_l = \cR (\bar{\a}_l)$, where $k \in \cS$ and $l \in \cA$?
\end{problem}
A set of points in $\R^d$ is said to be \textit{non-degenerate} if their affine span is $\R^d$. Clearly, the cardinality of such points must  be $d+1$ or more. For example, three points are non-degenerate in two-dimensions if and only if they are not collinear. We note that the transforms $\cQ_1, \dots , \cQ_M$ in Problem \ref{prob1} are latent variables and do not appear in the Problem \ref{prob2}.
\begin{definition}[Rigidity]
\label{def_rigidity}
A configuration is said to be \textit{rigid} in $\mathbb{R}^d$ (or simply \textit{rigid}) if the solution is unique in the sense of Problem \ref{prob2}; otherwise, the configuration is said to be \textit{flexible}. 
\end{definition}
To provide geometric insights to the rigidity problem, we consider simple instances of rigid and flexible configurations in Figures \ref{PC1} and \ref{PC3}. In particular, we wish to highlight the importance of overlaps among patches in determining rigidity. In Figure \ref{PC1}, patches $A$ and $B$ share two sensors. The patches can be reflected along the line joining sensors $1$ and $2$, but due to the presence of anchors in $A$ and $B$, reflection is ruled out. The configuration is thus rigid  in the sense of Definition \ref{def_rigidity}. On the other hand, the configuration in Figure \ref{PC3} is flexible since patches $C$ and $D$ can be reflected along the red dotted line.  
\begin{figure}
\center
\includegraphics[width= 0.7\linewidth]{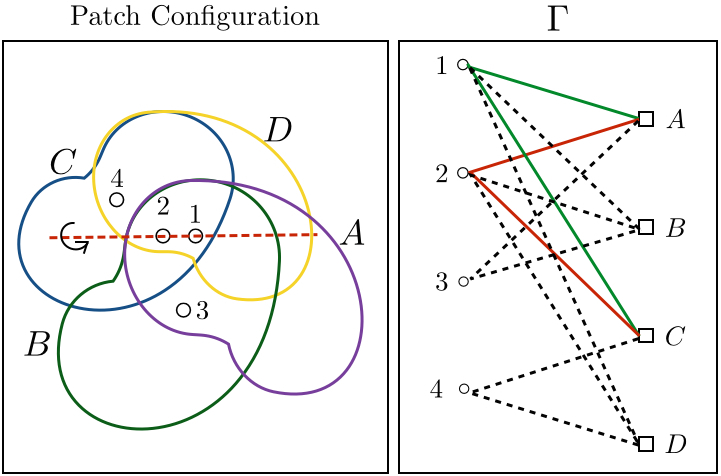}
\caption{Example of a configuration with four patches and four sensors (left). Holding patches $A$ and $B$ fixed, we can reflect patches $C$ and $D$ along the red dotted line. Thus, the configuration is not rigid. Notice that there are just two $\cV_1(\Gamma)$-disjoint paths between patch vertices $A$ and $C$; we have marked the edges of these paths with solid lines (right). See text for comments.}
\label{PC3}
\end{figure}
The above concepts and definitions were motivated by the rigidity aspects of the SNL problem \cite{SY2007}, and more generally, the distance-geometry problem \cite{Laman1970,GHT2010}. Here, the problem is to determine if the available distance measurements uniquely define the sensor locations (modulo a rigid transform which leaves the distances unchanged). A fundamental result in this regard is that, if the original sensor locations are \textit{generic} \cite{GHT2010}, then the uniqueness problem can be completely resolved using just the measurement graph \cite{CW2009,Laman1970,GHT2010}. Our present objective is to come up with similar results for Problem \ref{prob2}. At this point, we wish to emphasize that rigidity theory is solely concerned with exact measurements \cite{GHT2010,ASNF2010}. The point is that the combinatorial structure of the problem should, in principle, be able to guarantee exact recovery of the ground truth when the measurements are perfect. The design of an algorithm that can provably recover the ground truth is however a completely different topic.
We will present some representative examples in Section \ref{exp}, which suggest that rigidity can also help improve the algorithmic performance in the noisy setting. We will assume the following in the rest of the discussion.
\begin{assumption}[Non-degeneracy]
\label{assumption}
There are at least $d+1$ non-degenerate points in each patch.
\end{assumption}
Under the above assumption, we provide a necessary condition for rigidity. Before doing so, we note that a \textit{lateration} criteria was earlier proposed in \cite{CKS2015a} that can guarantee rigidity. However, it is not known if there exists an efficient test for lateration. Moreover, a path configuration can be rigid without being laterated, that is, lateration is not necessary  for rigidity. This fact is demonstrated with an example in Figure \ref{PC2}. This motivated us to look for a criteria that is both necessary and sufficient for rigidity. We propose a necessary condition for rigidity that can be tested efficiently. We present some examples where the condition is also sufficient, and conjecture that this is true in general.

Before stating the result, we set up a special bipartite graph that captures the overlap-pattern among patches. Recall that a graph is said to be bipartite if the vertex set $\cV$ can be divided into disjoint subsets $\cV_1$ and $\cV_2$ such that there are no edges between the vertices of a given $\cV_i$. 
\begin{definition}[Correspondence graph]
\label{defT}
We define the bipartite correspondence graph to be $\Gamma=(\cV_1,\cV_2,\cE)$, where $\cV_1( \Gamma)$ are the nodes, $\cV_2( \Gamma)$ are the patches, and $(k,i) \in \cE( \Gamma)$ if and only if $k \in \cC_i$. 
\end{definition}
The correspondence graph for the configuration in Figure \ref{PC1} is shown on the right. Finally, we introduce a special notion of connectivity. Recall that a \textit{path} is an ordered sequence of vertices $v_1,v_2,\ldots,v_n$ such that $(v_t,v_{t+1})$ is an edge for $1 \leq t \leq n-1$. The path is said to be between vertices $\alpha$ and $\beta$ (or the path connects $\alpha$ and $\beta$) if $v_1=\alpha$ and $v_n=\beta$. Two paths in a graph are said to be \textit{vertex-disjoint} over a set $\Theta$ if they do not share a common vertex from $\Theta$. A set of paths are said to be $\Theta$-\textit{disjoint} if any two paths are vertex-disjoint over $\Theta$.
\begin{definition}[Quasi connected]
\label{defConnected}
The correspondence graph $\Gamma$ is said to be quasi $k$-connected if any two vertices in $\cV_2 \left( \Gamma \right)$ have $k$ or more $\cV_1(\Gamma)$-disjoint paths 
between them. Moreover, there exist two vertices in $\cV_2 \left( \Gamma \right)$ that are connected by exactly $k$ paths that are $\cV_1(\Gamma)$-disjoint.
\end{definition}
For latter reference, we record the following characterization of quasi $k$-connectivity. The equivalence can be derived by adapting the proof of Menger's theorem \cite[Theorem 3.3.1]{Diestel2005}. 
\begin{proposition}[]
\label{Menger}
The following are equivalent.\\
(a) The correspondence graph $\Gamma$ is quasi $k$-connected.\\
(b) $\cE(\Gamma)$ can be divided into two disjoint subsets $E_1$ and $E_2$ such that the edges from $E_1$ and that from $E_2$ are\\
\indent (i)  incident on at least $k$ common vertices from $\cV_1(\Gamma)$, and\\
\indent (ii) not incident on any common vertex from $\cV_2(\Gamma)$.
\end{proposition}
In Figure \ref{PC3}, notice that there are $3$ paths between any pair of vertices in $\cV_2(\Gamma)$, but at most two paths are $\cV_1(\Gamma)$-disjoint. In this case, the configuration is not rigid. In fact, we have the following result (cf. supplementary material for the proof). 
\begin{theorem}[Necessary condition]
\label{THEOREM}
Under Assumption \ref{assumption}, if a configuration is rigid in $\mathbb{R}^d$, then its correspondence graph must be quasi $(d\!+\!1)$-connected. 
\end{theorem}
Moreover, we see from the example in Figure \ref{PC3} that, if $\Gamma$ fails to be quasi $( d +  1)$-connected, then the configuration is not rigid. We are yet to find a counter-example where the configuration is flexible yet $\Gamma$ is quasi $( d +  1)$-connected. Based on empirical evidences, we make the following conjecture.
\begin{conjecture}
Suppose Assumption \ref{assumption} holds, and that any $k \geq d +1$ points in $\cX_s \cup \cX_a$ are non-degenerate. Then a configuration is rigid in $\mathbb{R}^d$ if and only if its correspondence graph is quasi $\left( d \! + \! 1 \right)$-connected. 
\end{conjecture}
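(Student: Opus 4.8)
The forward implication needs no new work: if a configuration is rigid then its correspondence graph is quasi $(d+1)$-connected by Theorem~\ref{THEOREM}, whose hypothesis (Assumption~\ref{assumption}) is subsumed by the stronger non-degeneracy assumed in the conjecture. The plan is therefore to establish the converse, that quasi $(d+1)$-connectivity forces rigidity. I would first recast Problem~\ref{prob2} in terms of the patch transforms. Given any solution $(\x_k,\cQ_i)$ of Problem~\ref{prob1}, set $\cT_i:=\cQ_i\circ\cR_i^{-1}$ for $1\le i\le M$ and $\cT_{M+1}:=\mathrm{Id}$ for the anchor patch, and write $\bar{p}_k$ for the ground-truth position of node $k$ (so $\bar{p}_k=\bar{\x}_k$ if $k\in\cS$ and $\bar{p}_k=\bar{\a}_k$ if $k\in\cA$). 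Using \eqref{LocGlob1}--\eqref{LocGlob2} one checks that $\x_k=\cT_i(\bar{p}_k)$ for every $k\in\cC_i$ and that $\cT_i$ fixes every anchor of $\cC_i$; consequently, whenever patches $i$ and $j$ share a node $k$ (i.e. $k\in\cC_i\cap\cC_j$), the transforms $\cT_i$ and $\cT_j$ agree at $\bar{p}_k$. Since the anchor patch carries all anchors and has $\cT_{M+1}=\mathrm{Id}$, the anchor constraints are exactly the shared-node constraints with the vertex $M+1$. In this language the configuration is rigid precisely when every assignment $(\cT_i)$ of rigid transforms obeying these agreement constraints is forced to be constant (and hence equal to $\mathrm{Id}$), so the converse reduces to showing that quasi $(d+1)$-connectivity of $\Gamma$ admits only the constant assignment.

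The engine of the argument is the fact that two distinct rigid transforms of $\R^d$ agree only on a proper affine subspace: if $\cT\ne\cT'$ then their agreement locus $\{x:\cT(x)=\cT'(x)\}$ is the fixed-point set of the nontrivial map $(\cT')^{-1}\circ\cT$, hence affine of dimension at most $d-1$. Under the conjecture's hypothesis that any $d+1$ points of $\cX_s\cup\cX_a$ are non-degenerate, no $d+1$ ground-truth positions lie in such a subspace, so any such locus contains at most $d$ of the $\bar{p}_k$. I would first dispatch the case where $(\cT_i)$ takes only two values, $\mathrm{Id}$ on a nonempty set of patches $S$ and a single $\cT'\ne\mathrm{Id}$ on the nonempty complement $S'$. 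Put $\Phi:=\mathrm{Fix}(\cT')$ and let $C:=\{k:\bar{p}_k\in\Phi\}$, so $|C|\le d$. Any path in $\Gamma$ from a patch of $S$ to a patch of $S'$ must make a first crossing across a node shared by a patch of $S$ and a patch of $S'$; at that node the two transforms $\mathrm{Id}$ and $\cT'$ agree, so the node lies in $\Phi$, hence in $C$. Therefore $C$ is a $\cV_1(\Gamma)$-cut separating some pair of patches in $\cV_2(\Gamma)$, and by Menger's theorem for $\cV_1(\Gamma)$-disjoint paths (the per-pair version underlying Proposition~\ref{Menger}) that pair has at most $|C|\le d$ disjoint paths, contradicting quasi $(d+1)$-connectivity.

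The main obstacle is the general case, in which $(\cT_i)$ may take arbitrarily many values; this is exactly the gap that keeps the statement conjectural. The clean cut above fails because a path between two patches can traverse intermediate patches carrying yet other transforms, whose transition nodes lie in different agreement subspaces whose union may be far larger than $d$. My plan would be to induct on the number of patches via the partition characterization of Proposition~\ref{Menger}(b): group patches into monochromatic classes by transform value, observe that any two classes share at most $d$ nodes by the subspace bound, pass to the quotient graph on these classes in which every inter-class adjacency has capacity at most $d$, and attempt to show that a nontrivial such quotient cannot arise from a quasi $(d+1)$-connected $\Gamma$. The hard part is that a single class may border many others, so its total boundary can exceed $d$ even though each pairwise boundary is at most $d$; bounding this global boundary---or, equivalently, exhibiting some pair of patches (not necessarily involving the anchor patch) across which the count of $\cV_1(\Gamma)$-disjoint paths provably drops below $d+1$---is the crux, and is where a complete proof beyond the confirming examples reported in the paper remains to be supplied.
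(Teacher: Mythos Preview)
This statement is presented in the paper as a conjecture, supported only by empirical evidence; the paper gives no proof of the converse implication. There is therefore no proof in the paper to compare against.

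Your forward direction is correct and is precisely Theorem~\ref{THEOREM}. Your recasting of the converse in terms of $\cT_i=\cQ_i\circ\cR_i^{-1}$ mirrors the equivalence (Proposition~\ref{prob5}) that the paper uses in its supplementary proof of that theorem. The two-value case you treat is correct and already goes beyond anything the paper establishes: the agreement locus of two distinct rigid motions of $\R^d$ is a proper affine flat, the genericity hypothesis then caps the crossing set $C$ at $d$ nodes, and Menger's theorem applied to any pair $(i,j)\in S\times S'$ yields at most $d$ paths that are $\cV_1(\Gamma)$-disjoint, contradicting quasi $(d+1)$-connectivity.

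You also correctly identify the multi-value case as the genuine obstruction, and your diagnosis is accurate: when $(\cT_i)$ takes three or more values, a single colour class may border several others, each contributing up to $d$ shared nodes, so no global $\cV_1$-cut of size at most $d$ is forced by the pairwise agreement-locus bound alone. The quotient-graph reduction you sketch is a natural line of attack, but, as you acknowledge, it does not close the gap. The paper offers nothing further here; the sufficiency direction remains open.
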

The second assumption appears somewhat stringent at first sight. The relevance of this assumption is somewhat clear from the example in Figure \ref{PC1}. Namely, if sensors $1,2$ and anchor $5$ are concurrent, then one can reflect patch $B$ about the line joining these points. We note that the use of some form of non-degeneracy assumption is standard in rigidity theory \cite{GHT2010}. 

Based on Definitions \ref{defT} and \ref{defConnected}, it is not difficult to establish a relation between quasi connectivity and the maximum flow between the vertices of $\cV_2(\Gamma)$ \cite{CLRS2001}. In this context, recall that a vertex is said to have \textit{capacity} $\kappa$ if the incoming and outgoing flows for the vertex are at most $\kappa$ \cite{CLRS2001}. 
\begin{proposition}[Connectivity using flow]
\label{connect_cut}
Assume that each vertex in $\cV_1(\Gamma)$ is assigned unit capacity while computing the flow. Then the maximum flow between the vertices of $\cV_2(\Gamma)$ is at least $k$ if and only if $\Gamma$ is quasi $k$-connected. 
\end{proposition}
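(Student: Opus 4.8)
The plan is to reduce Proposition~\ref{connect_cut} to the integral max-flow--min-cut theorem by the standard \emph{vertex-splitting} construction, and then to read off quasi $k$-connectivity through the edge-partition characterization in Proposition~\ref{Menger}. Fix two patch vertices $\alpha,\beta\in\cV_2(\Gamma)$ and build an auxiliary directed network $\Gamma'$ as follows: replace each node $v\in\cV_1(\Gamma)$ by an ``in'' copy $v^{-}$ and an ``out'' copy $v^{+}$ joined by an arc $v^{-}\!\to\!v^{+}$ of capacity $1$; turn each edge $(v,i)\in\cE(\Gamma)$ (with $v\in\cV_1$, $i\in\cV_2$) into the pair of arcs $i\!\to\!v^{-}$ and $v^{+}\!\to\!i$, each of capacity $+\infty$ (any integer larger than $|\cV_1(\Gamma)|$ works); take $\alpha$ as source and $\beta$ as sink. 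Since $\Gamma$ is bipartite, every $\alpha$--$\beta$ walk alternates between $\cV_2$ and $\cV_1$, so it lifts to a directed $\alpha$--$\beta$ path in $\Gamma'$ whose only capacitated arcs are the split-arcs of the $\cV_1$-vertices it meets --- precisely the set over which disjointness is measured in Definition~\ref{defConnected}.

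The core step is the two-way correspondence ``integral $\alpha$--$\beta$ flow of value $f$ in $\Gamma'$'' $\Longleftrightarrow$ ``$f$ pairwise $\cV_1(\Gamma)$-disjoint $\alpha$--$\beta$ paths in $\Gamma$''. For the forward direction, apply flow decomposition to an integral flow: since each split-arc $v^{-}\!\to\!v^{+}$ has capacity $1$, the resulting $f$ path-flows never reuse a vertex of $\cV_1(\Gamma)$, although they may share vertices of $\cV_2(\Gamma)$, which Definition~\ref{defConnected} allows. For the converse, routing one unit along each of $f$ given $\cV_1$-disjoint paths is feasible because distinct paths occupy distinct split-arcs and the $\cV_2$-incident arcs are uncapacitated. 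Hence the maximum $\alpha$--$\beta$ flow in $\Gamma'$ equals the maximum number of $\cV_1(\Gamma)$-disjoint $\alpha$--$\beta$ paths in $\Gamma$; minimizing over all pairs $\alpha,\beta\in\cV_2(\Gamma)$ and comparing with Definition~\ref{defConnected} gives the asserted equivalence between ``max flow $\geq k$'' and quasi $k$-connectivity. Alternatively, one can argue through cuts: by the integral max-flow--min-cut theorem \cite{CLRS2001} a finite minimum cut of $\Gamma'$ cannot use any $\infty$-capacity arc, so it consists only of split-arcs and thus names a set of $\cV_1$-vertices whose removal separates $\alpha$ from $\beta$, whose size is exactly the integer $k$ of Proposition~\ref{Menger}.

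The step I expect to require the most care is the bipartite bookkeeping around cuts: one must check that a minimum cut of $\Gamma'$ really does project onto an honest $\cV_1(\Gamma)$-separator (rather than a mixed vertex set) and that, when it is matched against clauses (i)--(ii) of Proposition~\ref{Menger}, no vertex of $\cV_2(\Gamma)$ is inadvertently counted among the $k$ common vertices. This is settled by taking the separator to be $\{\,v\in\cV_1(\Gamma) : v^{-}\!\to\!v^{+}\text{ crosses a fixed minimum }\alpha\text{--}\beta\text{ cut}\,\}$ and verifying that the source side and sink side of that cut induce exactly the edge partition $(E_1,E_2)$ of Proposition~\ref{Menger}. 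Granting this routine verification, the proposition follows at once from flow decomposition and the integrality of max-flow--min-cut.
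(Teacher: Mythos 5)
The paper gives no standalone proof of this proposition---it is presented as a routine consequence of max-flow--min-cut with unit vertex capacities (citing \cite{CLRS2001})---and your vertex-splitting construction is exactly the standard formalization of that reduction: the flow-decomposition correspondence between integral $\alpha$--$\beta$ flows in $\Gamma'$ and $\cV_1(\Gamma)$-disjoint paths in $\Gamma$ is correct (every path through a split vertex must use its unit-capacity arc, and $\cV_1$-disjoint paths share no edges since every edge of the bipartite $\Gamma$ meets $\cV_1$), as is the observation that a finite minimum cut consists solely of split arcs because $\alpha,\beta\in\cV_2(\Gamma)$ are never adjacent. The only caveat is inherited from the paper's own phrasing: since Definition~\ref{defConnected} additionally requires some pair of patch vertices to have \emph{exactly} $k$ disjoint paths, what your argument literally establishes is that the minimum over pairs of the maximum flow equals the quasi-connectivity parameter, which is the intended reading of the proposition.
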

The key point is that one can efficiently check if, under the assumption that the vertices in $\cV_1(\Gamma)$ have unit capacity, the maximum flow between the vertices of $\cV_2(\Gamma)$ is at least $k$. This can be done using the Ford-Fulkerson algorithm \cite{CLRS2001}. Note that we do not need to check the maximum flow for all pairs of vertices in $\cV_2(\Gamma)$. We can simply fix a vertex and check the maximum flow between this vertex and the remaining vertices.

\begin{figure}
\center
\includegraphics[width= 0.7\linewidth]{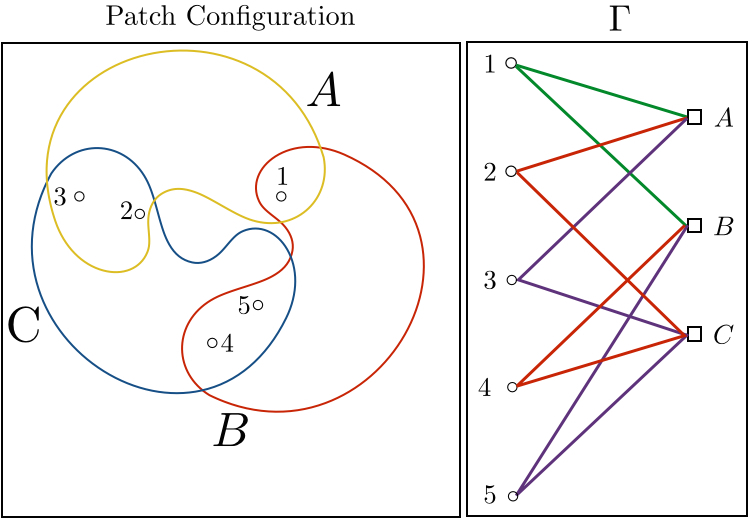}
\caption{Example of a rigid configuration (left) in two dimensions \cite{CKS2015a}. The configuration is not laterated, but the correspondence graph (right) is quasi $3$-connected. The edges of the three disjoint paths between patch vertices $A$ and $B$ are marked in solid.}
\label{PC2}
\end{figure}
\section{Partitioning}
\label{GP}

We now describe a heuristic for partitioning $\cG$ into overlapping patches such that the corresponding  $\Gamma$ is quasi $(d \! + \! 1)$-connected. In this relation, we note that divide-and-conquer approaches have been proposed in \cite{CKS2015b,CLS2012,LT2009,ZLGG2010}, where $\cG$ is partitioned into overlapping patches. The difficulty with the approaches in \cite{CKS2015b, LT2009} is that the patches and the resulting patch configuration are not guaranteed to be rigid.

We propose to bypass the former rigidity issue by using \textit{cliques}, that is, complete subgraphs of the measurement graph. In other words, each patch is a clique in our approach. This is precisely why we choose to denote the patches as $\cC_i$ in Section \ref{RP}. Cliques are trivially rigid \cite{Laman1970,GHT2010}, and can be localized using multidimensional scaling \cite{BG2003}. In particular, for each $i \in \cV(\cG)$, we  extract a maximal clique containing $i$. The system of cliques forms a clique-cover. We recall that a clique is said to be maximal if it is not contained in a strictly larger clique. By targeting maximal cliques, we wish to minimize the number of cliques that are required to be registered in the final phase.

Let $\cG_i$ denote the \textit{neighborhood} graph of some $i \in \cV(\cG)$. Namely, $\cG_i$ is the subgraph of $\cG$ induced by $i$ and its one-hop neighbors. For each vertex $i$, we want to find a maximal clique $\cC \subset \cV(\cG)$ containing $i$. In this regard, we note that it suffices to restrict the search to  $\cG_i$. 
\begin{proposition}
\label{clq1}
Let $\cC$ be a maximal clique. Then $i \in \cC$ if and only if $\cC \subset \cV(\cG_i)$. 
\end{proposition}
\begin{proof}
Let $\cC$ be a maximal clique containing $i$. Then, for any $j \in \cC$, we have $(i,j) \in \cE(\cG)$. Hence, $j \in \cV(\cG_i)$. In the other direction, suppose that $\cC \subset \cV(\cG_i)$, but $i$ does not belong to $\cC$. Then, by appending vertex $i$ to $\cC$, we obtain a clique that strictly contains $\cC$, which contradicts the maximality of $\cC$.
\end{proof}
Unfortunately, finding cliques is generally intractable \cite{BG2015}. Based on Proposition \ref{clq1}, we first extract a clique from a given subgraph $\cG_i$ using the algorithm in \cite{BG2015}. In this work, the combinatorial problem of finding maximal cliques is relaxed into a continuous optimization problem. The stationary points of the latter are computed using projected gradient descent. The key result of the paper is that one can provably locate a clique by running the  gradient-descent  for sufficient number of iterations and rounding the output \cite[Theorem 7, Corollary 3]{BG2015}. The authors empirically noticed that the clique retuned by the algorithm is often maximal. Since the subgraphs $\cG_i$ are typically small  for practical values of  $r$, we found the algorithm in \cite{BG2015} to be quite efficient for our purpose.

The clique located within a given $\cG_i$ using the above clique-finding algorithm may not contain vertex $i$. In this case, we can in fact obtain a larger clique simply by appending vertex $i$ to the found clique. Generally, since the subgraphs are small, one can efficiently test for maximality, and keep appending nodes until the maximal clique is found. In practice, we noticed that the cliques returned by the algorithm in \cite{BG2015} are often maximal or near-maximal. As a result, the combined process of appending vertices and testing for maximality is quite fast. We note that an extracted clique can belong to two or more subgraphs. We discard the redundant cliques during the clique-finding process. At the end, suppose that we have located, say, $m$ maximal cliques, $\cC_1, \dots ,$ $\cC_m$, that cover the vertices of $\cG$. We next test the rigidity of the patch configuration. To do so, we append to the existing cliques an additional clique $\cC_{m+1}$ containing the anchors, and test if $\Gamma$ is quasi $( d+1)$-connected. If so, we set $M=m$, and proceed to the localization phase in Section \ref{cMDS}.

If $\Gamma$ fails the test, we proceed as follows. Following Proposition \ref{connect_cut}, we know that there exist $s,t \in \cV_2(\Gamma)$ for which the maximum $s\text{-}t$ flow is $k \leq d$, where recall that the vertices in $\cV_1(\Gamma)$ are assigned unit capacity. In fact, the Ford-Fulkerson algorithm returns the value $k$ and the corresponding minimum cut $C=(S,T)$, where $s \in S$ and $t \in T$ \cite{CLRS2001}. Let $I_S$ and $I_T$ be the indices of the cliques in $S$ and $T$, that is, $I_S = S \cap \cV_2(\Gamma)$ and $I_T = T \cap \cV_2(\Gamma)$. 
\begin{proposition}
\label{cut1}
Let $A = \cup_{\alpha \in I_S} \cC_{\alpha}$ and $B = \cup_{\beta \in I_T} \cC_{\beta}$. Then $\lvert A \cap B \rvert = k$.
\end{proposition}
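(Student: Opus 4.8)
The plan is to recognize $A\cap B$ as (the node set underlying) a minimum $s$-$t$ vertex cut in $\Gamma$ and then appeal to the max-flow--min-cut theorem. First I would set up the standard reduction used to compute a vertex-capacitated flow: split each node-vertex $v\in\cV_1(\Gamma)$ into an in-copy $v_{\mathrm{in}}$ and an out-copy $v_{\mathrm{out}}$ joined by a unit-capacity arc, model every undirected edge $\{i,v\}$ of $\Gamma$ (with $i\in\cV_2(\Gamma)$) by the two infinite-capacity arcs $i\to v_{\mathrm{in}}$ and $v_{\mathrm{out}}\to i$, leave the patch-vertices $\cV_2(\Gamma)$ unsplit, and run Ford--Fulkerson between $s$ and $t$. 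Because only the arcs $v_{\mathrm{in}}\to v_{\mathrm{out}}$ carry finite capacity, the returned minimum cut $C=(S,T)$ severs exactly the arcs indexed by $W:=\{v\in\cV_1(\Gamma): v_{\mathrm{in}}\in S,\ v_{\mathrm{out}}\in T\}$, and $|W|=k$.

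Next I would prove $A\cap B=W$ after identifying a network node with its split pair. For $A\cap B\subseteq W$: if $v\in\cC_\alpha\cap\cC_\beta$ with $\alpha\in I_S$ and $\beta\in I_T$, then the infinite-capacity arc $\alpha\to v_{\mathrm{in}}$ cannot be cut, so $v_{\mathrm{in}}\in S$; similarly $v_{\mathrm{out}}\to\beta$ forces $v_{\mathrm{out}}\in T$, hence $v\in W$. For $W\subseteq A\cap B$: if $v\in W$ had no neighbour in $I_S$, I would move $v_{\mathrm{in}}$ from $S$ to $T$; this removes the cut arc $v_{\mathrm{in}}\to v_{\mathrm{out}}$ and creates no new cut arc (the in-arcs of $v_{\mathrm{in}}$ come only from neighbours of $v$, all of which lie in $T$ by assumption, and $s$ is not a neighbour of $v$), contradicting the minimality of $C$; therefore $v$ has a neighbour $\alpha\in I_S$, so $v\in A$, and symmetrically $v\in B$. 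Combining the inclusions gives $|A\cap B|=|W|=k$.

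I expect the reverse inclusion $W\subseteq A\cap B$ to be the crux: it can fail for an arbitrary partition $(S,T)$ realizing the min-cut value, and it genuinely needs the minimality / local-exchange argument above (equivalently, one may invoke the ``innermost'' minimum cut). The only other point requiring care is the orientation bookkeeping for the undirected graph $\Gamma$, namely that each undirected edge must be replaced by a pair of oppositely directed infinite-capacity arcs, so that no edge of $\Gamma$ can ever lie in a finite cut; once this is in place the rest is routine. It is worth remarking that this statement is essentially the vertex form of Menger's theorem, consistent with Propositions \ref{Menger} and \ref{connect_cut}.
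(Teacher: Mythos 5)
Your proof is correct and takes essentially the same route as the paper's: both rest on the max-flow--min-cut theorem together with the unit capacities assigned to $\cV_1(\Gamma)$, which force the minimum $s$-$t$ cut to consist of exactly $k$ node-vertices that are then identified with $A \cap B$. The paper's version is far terser (it simply asserts $\lvert A\cap B\rvert \le k$ and rules out $\lvert A\cap B\rvert < k$ by the unit-capacity observation), whereas you make the vertex-splitting reduction explicit and justify the harder inclusion $W \subseteq A\cap B$ by a local exchange argument using minimality of the cut --- a more rigorous rendering of the same idea, not a different approach.
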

\begin{proof}
From the max-flow-min-cut theorem \cite{CLRS2001}, $\lvert A \cap B \rvert \leq k$. If $\lvert A \cap B \rvert$ is less than $k$, then there would be a vertex in $\cV_1(\Gamma)$ with more than one edge in the cut set \cite{CLRS2001}. However, since each vertex in $\cV_1(\Gamma)$ has unit capacity, this is not possible. 
\end{proof}
We wish to increase the maximum flow by extracting a clique and appending it to the existing configuration. In particular, the appended clique must contain some $i \in A\setminus B$ and $j \in B \setminus A$. Our task is to find a maximal clique containing $i$ and $j$. Define $\cG_{ij}$ be the common subgraph of $\cG_i$ and $\cG_j$, that is, $\cG_{ij}$ is the subgraph of $\cG$ induced by the vertices $\cV(\cG_i) \cap \cV(\cG_j)$. Similar to Proposition \ref{clq1}, we note the following. 
\begin{proposition} 
Let $\cC_0$ be a maximal clique. Then $i,j \in \cC_0$ if and only if $\cC_0 \subset \cV(\cG_{ij})$.
\end{proposition}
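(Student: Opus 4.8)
The plan is to mimic exactly the argument used for Proposition \ref{clq1}, since $\cG_{ij}$ plays the role for the pair $\{i,j\}$ that $\cG_i$ played for the single vertex $i$. The statement to prove is a biconditional, so I would split it into the two implications and dispatch each with a short direct argument.

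\emph{Forward direction.} Suppose $\cC_0$ is a maximal clique with $i,j\in\cC_0$. I would show $\cC_0\subset\cV(\cG_{ij})$, which by definition of $\cG_{ij}$ means showing $\cC_0\subset\cV(\cG_i)$ and $\cC_0\subset\cV(\cG_j)$. But each of these is precisely the content of Proposition \ref{clq1}: since $\cC_0$ is a maximal clique containing $i$, Proposition \ref{clq1} gives $\cC_0\subset\cV(\cG_i)$, and since it also contains $j$, the same proposition gives $\cC_0\subset\cV(\cG_j)$. Intersecting, $\cC_0\subset\cV(\cG_i)\cap\cV(\cG_j)=\cV(\cG_{ij})$.

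\emph{Reverse direction.} Suppose $\cC_0$ is a maximal clique with $\cC_0\subset\cV(\cG_{ij})$. I want $i,j\in\cC_0$. The cleanest route is by contradiction: if, say, $i\notin\cC_0$, then I claim $\cC_0\cup\{i\}$ is still a clique. Indeed, every vertex $v\in\cC_0$ lies in $\cV(\cG_{ij})\subset\cV(\cG_i)$, so by the definition of the neighborhood graph $\cG_i$ either $v=i$ or $(i,v)\in\cE(\cG)$; since we assumed $i\notin\cC_0$, the first case is excluded, so $(i,v)\in\cE(\cG)$ for all $v\in\cC_0$. Hence $\cC_0\cup\{i\}$ is a clique strictly containing $\cC_0$, contradicting maximality. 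Therefore $i\in\cC_0$, and by the symmetric argument (using $\cV(\cG_{ij})\subset\cV(\cG_j)$) also $j\in\cC_0$.

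There is essentially no obstacle here; the only point to keep straight is the definition of $\cG_i$ as the subgraph \emph{induced by $i$ together with its one-hop neighbors}, so that a vertex of $\cV(\cG_i)$ is either $i$ itself or adjacent to $i$ — this is what makes the "append vertex $i$" step go through, just as in Proposition \ref{clq1}. One should also note the mild implicit hypothesis that $\cG_{ij}$ is nonempty (equivalently $(i,j)\in\cE(\cG)$ and they have no common neighbor is not required, only that they are within two hops); if $\cV(\cG_i)\cap\cV(\cG_j)=\varnothing$ the statement is vacuously consistent since then no maximal clique can sit inside it, but in the intended application $i$ and $j$ are chosen precisely so that a common clique exists.
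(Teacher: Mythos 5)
Your proof is correct and follows essentially the same route the paper intends: the paper gives no separate argument for this proposition, stating only that it is ``similar to Proposition~\ref{clq1}'', and your forward direction (apply Proposition~\ref{clq1} to $i$ and to $j$ and intersect) together with the append-and-contradict-maximality argument for the reverse direction is exactly that adaptation. The remark about the degenerate case $\cV(\cG_i)\cap\cV(\cG_j)=\varnothing$ is a reasonable aside but not needed for the statement itself.
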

After appending $\cC_0$ to the existing cliques, we obtain a new configuration. Accordingly, we update $\cV_2(\Gamma)$ and $\cE(\Gamma)$, and increase $m$ by $1$. In particular, we reorder the indices of the cliques so that $\cC_{m+1}$ continues to be the anchor clique. For the updated $\Gamma$, we recompute $I_S$ or $I_T$, and note that $m$ belongs to either of these. As a result, we conclude the following.
\begin{proposition}
\label{NoNewMinCut}
For the updated $\Gamma$, $\lvert A \cap B \rvert > k$.
\end{proposition}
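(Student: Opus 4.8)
The plan is to bookkeep exactly how the minimum cut and the sets $A,B$ change when $\cC_0$ is appended, and then to exhibit one node that lies in the updated $A\cap B$ but not in the old one. Write $\Gamma_0$ for the correspondence graph before appending $\cC_0$; let $(S,T)$ be the minimum $s$-$t$ cut of value $k\le d$ returned by Ford--Fulkerson on $\Gamma_0$ (with the vertices of $\cV_1(\Gamma_0)$ carrying unit capacity), and set $I_S^0=S\cap\cV_2(\Gamma_0)$, $I_T^0=T\cap\cV_2(\Gamma_0)$, $A_0=\cup_{\alpha\in I_S^0}\cC_\alpha$, $B_0=\cup_{\beta\in I_T^0}\cC_\beta$. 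By Proposition \ref{cut1}, $\lvert A_0\cap B_0\rvert=k$. First I would record that appending $\cC_0$ introduces a single new vertex $v_0$ into $\cV_2(\Gamma)$, joined by edges to exactly the nodes of $\cC_0$; since $\cC_0\subseteq\cV(\cG_{ij})\subseteq\cV(\cG)$, every such node already lies in $\cV_1(\Gamma_0)=\cV_1(\Gamma)$, so no vertex of $\cV_1$ and no edge among the old vertices is created. Consequently the cut $(S,T)$ extends to a cut of the updated $\Gamma$ simply by assigning $v_0$ to one side; equivalently, the new patch index $m$ lands in $I_S$ or in $I_T$, and the sets $A,B$ of Proposition \ref{NoNewMinCut} are the correspondingly enlarged unions.

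With this in place the argument is a short case split. If $m\in I_S$, then $A=A_0\cup\cC_0$ and $B=B_0$. Recall that $\cC_0$ was chosen to contain a node $j\in B_0\setminus A_0$. Then $j\in\cC_0\cap B_0\subseteq(A_0\cup\cC_0)\cap B_0=A\cap B$, while $j\notin A_0$ forces $j\notin A_0\cap B_0$; combined with $A_0\cap B_0\subseteq A\cap B$ and $\lvert A_0\cap B_0\rvert=k$, this yields $\lvert A\cap B\rvert\ge k+1$. The case $m\in I_T$ is symmetric: now $A=A_0$ and $B=B_0\cup\cC_0$, and the node $i\in A_0\setminus B_0$ that $\cC_0$ was chosen to contain plays the role of $j$, again giving $\lvert A\cap B\rvert\ge k+1$. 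In both cases $\lvert A\cap B\rvert>k$, which is the assertion.

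I do not expect a genuine obstacle here: the statement is elementary set arithmetic once the effect of appending $\cC_0$ on $\Gamma$, on $(S,T)$, and on $I_S,I_T$ is pinned down. The two points deserving care are (a) the re-indexing step — after increasing $m$ by one and relabelling so that the anchor clique keeps index $m+1$, one must check that the vertex carrying index $m$ is precisely the appended clique $\cC_0$, which is exactly what the discussion preceding the proposition records; and (b) the choice of side for $v_0$, handled by verifying that the symmetric case split covers both options and gives the same bound. I would also note, as context, that this augmentation is meant to be applied iteratively: Proposition \ref{NoNewMinCut} guarantees that the particular offending cut is no longer tight, so repeating the procedure drives the correspondence graph toward the quasi $(d\!+\!1)$-connectivity certified via the max-flow test of Proposition \ref{connect_cut}.
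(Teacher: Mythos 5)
Your argument is correct and follows essentially the same route the paper sketches: the appended clique's vertex $m$ joins either $I_S$ or $I_T$, and since $\cC_0$ contains both $i \in A\setminus B$ and $j \in B\setminus A$, one of these nodes is added to $A \cap B$ while the old intersection (of size $k$ by Proposition \ref{cut1}) is preserved, giving $\lvert A \cap B\rvert \ge k+1$. Your explicit case split and bookkeeping merely spell out the one-sentence justification the paper gives before the proposition.
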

Moreover, if the maximum flow is uniquely achieved for the vertices identified by the Ford-Fulkerson algorithm, then appending $\cC_0$ actually increases the maximum flow. We continue this process, in which we alternately augment the configuration and test rigidity, until we attain the maximum flow of $d+1$. It is possible that the process is prematurely terminated if we are unable to find a clique of size at least $d +1$. This typically happens in adversarial settings where $r$ is small, making $\cG$ extremely sparse. At the end of the process, assume that we have $M+1$ cliques, $\cC_1\ldots,\cC_{M+1}$, where $\cC_{M+1}$ is the anchor clique. For the simulations in Section \ref{exp}, we found that $M$ is typically about $30\%$ of the total number of nodes.

\section{Localization}
\label{cMDS}

Having partitioned the measurement graph into a system of overlapping cliques, we now localize them in parallel. Since all the inter-node distances are available in a clique, we can efficiently localize a clique using multidimensional scaling \cite{BG2003}. However, there are two types of cliques, namely, cliques without anchors and those with anchors. For the former, we can directly use classical multidimensional scaling (cMDS). In particular, suppose that the clique has $n$ sensors, and the distances are $\{d_{ij} : 1 \leq i,j \leq n\}$. Consider the $n \times n$ matrices $\bD$ and $\bB$ given by
\[ \bD_{ij} = \begin{cases} 
d_{ij}^2 & \text{ if } \ i \neq j, \\
0 & \text{ otherwise,} \\
\end{cases}
\]
and 
\begin{equation*}
\bB = -\frac{1}{2} \Big(\bI_n-\frac{1}{n}\textbf{uu}^{\top}\Big) \mathbf{D} \Big(\bI_n-\frac{1}{n}\textbf{uu}^{\top} \Big),
\end{equation*}
where $\textbf{u}$ is the all-ones vector of length $n$, and $\bI_n$ is the $n \times n$ identity matrix. Since $\bB$ is symmetric, it has real eigenvalues and a full set of orthonormal eigenvectors. Let $\lambda_1 \geq  \dots \geq \lambda_n$ be the sorted eigenvalues, and $\q_1,\ldots,\q_n$ be the corresponding eigenvectors. 
\begin{theorem}[Multidimensional Scaling, \cite{BG2003}]
Suppose that the available distances are exact, that is, $d_{ij}=\lVert \x_i - \x_j \rVert$ for some $\x_1,\ldots,\x_n$. Then $\bB \succeq \mathbf{0}$ and $\mathrm{rank}(\bB) \leq d$. The sensor locations can be taken to be 
\begin{equation}
\label{embedding}
\x_i = \big( \sqrt{\lambda_1} \q_1(i), \dots, \sqrt{\lambda_d} \q_d(i) \big)^{\top} \quad (1\leq i \leq n).
\end{equation}
\end{theorem}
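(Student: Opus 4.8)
The plan is to recognize $\bB$ as the Gram matrix of a recentered copy of the configuration $\x_1,\ldots,\x_n$; positive semidefiniteness and the rank bound are then immediate, and the embedding \eqref{embedding} is obtained by extracting a ``square root'' of $\bB$ from its spectral decomposition.

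First I would collect the points into the $n \times d$ matrix $\bX$ whose $i$-th row is $\x_i^{\top}$, and use the elementary identity $\lVert \x_i - \x_j \rVert^2 = \lVert \x_i \rVert^2 + \lVert \x_j \rVert^2 - 2\,\x_i^{\top}\x_j$, together with $\bD_{ii}=0$, to write $\bD = \textbf{c}\textbf{u}^{\top} + \textbf{u}\textbf{c}^{\top} - 2\,\bX\bX^{\top}$, where $\textbf{c}$ is the vector with entries $\lVert \x_i \rVert^2$. Writing $\bJ = \bI_n - \tfrac1n \textbf{u}\textbf{u}^{\top}$ for the centering matrix and using $\bJ\textbf{u} = \mathbf{0}$ and $\textbf{u}^{\top}\bJ = \mathbf{0}^{\top}$, the two rank-one terms are annihilated under the congruence $\bJ(\cdot)\bJ$, so that
\begin{equation*}
\bB = -\tfrac12\,\bJ\bD\bJ = \bJ\bX\bX^{\top}\bJ = (\bJ\bX)(\bJ\bX)^{\top}.
\end{equation*}
Setting $\bY = \bJ\bX$, whose rows $\mathbf{y}_i = \x_i - \bar{\x}$ are the points recentered at the centroid $\bar{\x} = \tfrac1n\sum_i \x_i$, we get $\bB = \bY\bY^{\top}$, which is manifestly symmetric and positive semidefinite, with $\mathrm{rank}(\bB) = \mathrm{rank}(\bY) \leq \mathrm{rank}(\bX) \leq d$.

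For the embedding, since $\bB \succeq \mathbf{0}$ has at most $d$ nonzero eigenvalues, we have $\lambda_1 \geq \cdots \geq \lambda_d \geq 0$ and $\lambda_{d+1} = \cdots = \lambda_n = 0$, so the spectral decomposition collapses to $\bB = \sum_{i=1}^{d}\lambda_i\,\q_i\q_i^{\top} = \hat{\bX}\hat{\bX}^{\top}$ with $\hat{\bX} = \big[\sqrt{\lambda_1}\,\q_1,\ldots,\sqrt{\lambda_d}\,\q_d\big]$; the rows of $\hat{\bX}$ are precisely the candidate locations in \eqref{embedding}. It remains to verify that these points reproduce the input distances: for the $\x_i$ of \eqref{embedding} one has $\lVert \x_i - \x_j \rVert^2 = \bB_{ii} + \bB_{jj} - 2\bB_{ij}$, and substituting $\bB_{ij} = \mathbf{y}_i^{\top}\mathbf{y}_j$ this becomes $\lVert \mathbf{y}_i - \mathbf{y}_j \rVert^2 = \lVert \x_i - \x_j \rVert^2 = d_{ij}^2$, since recentering does not change pairwise distances.

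The argument is largely routine once the factorization $\bB = (\bJ\bX)(\bJ\bX)^{\top}$ is established; the one spot that deserves care is the double-centering step, where one must use both $\bJ\textbf{u}=\mathbf{0}$ and $\textbf{u}^{\top}\bJ = \mathbf{0}^{\top}$ so that $\bJ(\textbf{c}\textbf{u}^{\top}+\textbf{u}\textbf{c}^{\top})\bJ$ vanishes identically, and the closing check that the eigen-embedding --- which a priori only matches the inner products of the \emph{centered} configuration --- also matches the original pairwise distances. I would additionally note, in keeping with the rigidity discussion of Section \ref{RP}, that the recovered configuration is automatically centered at the origin and is determined only up to an orthogonal transformation.
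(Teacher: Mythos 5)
Your argument is correct: the double-centering identity $\bB = (\bJ\bX)(\bJ\bX)^{\top}$ immediately gives $\bB \succeq \mathbf{0}$ and $\mathrm{rank}(\bB)\leq d$, and your closing check that the eigen-embedding reproduces the pairwise distances (via $\bB_{ii}+\bB_{jj}-2\bB_{ij} = \lVert \mathbf{y}_i-\mathbf{y}_j\rVert^2 = d_{ij}^2$) is exactly the step that justifies \eqref{embedding}. The paper does not prove this theorem at all --- it is quoted from the cited multidimensional-scaling reference --- and what you have written is the standard derivation of that result, so there is nothing in the paper to diverge from; only a cosmetic caution that your centering matrix $\bJ$ reuses a symbol the paper later assigns a different meaning in its registration section.
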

If the distances are noisy, $\bB$ can have negative eigenvalues and its rank can be greater than $d$. In this case, it is customary to use the positive eigenvalues and the corresponding eigenvectors in \eqref{embedding}. The resulting inter-sensor distances are an approximation to the available distances, where the approximation error is determined by the rank of $\bB$, and the number and magnitude of the negative eigenvalues \cite{BG2003}. Perturbation analysis of cMDS is a well-researched topic and the method is known to be stable under deformations \cite{Sibson1979}.

If a clique has one or more anchors, we have to take into consideration the stipulated anchor locations. More precisely, we have a constrained problem, where we need to reconstruct the sensor locations keeping the anchor variables fixed. In such scenarios, we can use cMDS followed by an alignment. Assume that, of the $n$ nodes, the first $k$ are anchors and the remaining are sensors. We first localize the $n$ nodes using cMDS, regardless of the anchor locations. Then we align the reconstructed anchors with the original anchors via a rigid transformation. In particular, if there is just one anchor, and if the reconstructed and original locations are $\x$ and $\bar{\a}$, then we translate the reconstructed nodes by $\bar{\a}-\x$. If there are more than one anchor, then we perform an optimal alignment using least-square fitting:
\begin{equation}
\label{Arun}
\min_{\bO \in \O\left(d\right), \t \in \R^d} \ \sum_{i=1}^{k} \lVert \bO \x_{i}+\t  - \bar{\a}_i \rVert^2,
\end{equation}
where $\x_i$ and $\bar{\a}_i$ are the reconstructed and the original anchor locations. As is well-known, the minimum of \eqref{Arun} has a simple closed-form solution \cite{AHB1987}. In particular, the optimal transform is given by $\bO^{\star}=\bV \bU^{\top}$ and $\t^{\star} = \bm{\mu} -\bO^{\star}\bm{\nu}$, where 
\begin{equation*}
\bm{\mu}= \frac{1}{k} \sum_{i=1}^{k} \x_i \quad \text{and} \quad\bm{\nu} = \frac{1}{k} \sum_{i=1}^k \bar{\a}_i,
\end{equation*}
and $\bC=\bU \Sigma \bV^{\top}$ is the SVD of $\bC = \sum_{i=1}^{k} (\x_i - \bm{\mu})(\bar{\a}_i -\bm{\nu})^{\top}$. We apply the transform $\x \mapsto \bO^{\star}\x+\t^{\star}$ on the reconstructed sensors, and place the anchors in their stipulated locations. Finally, we refine the localization by minimizing the stress function using a gradient-based method \cite{BLTYW2006}. The refinement is particularly effective when the distances are noisy.

\section{Registration}
\label{ADMM}

As a final step, we need to register the localized cliques in a global coordinate system. While the least-square formulation of the registration problem has a closed-form solution for two cliques \cite{AHB1987}, the problem is generally intractable when there are three or more cliques \cite{CKS2015a}. Recently, it was demonstrated in \cite{CKS2015a} that the least-square optimization can be approximated using a semidefinite program (SDP). Later, a scalable ADMM-based solver for this SDP was proposed in \cite{SC2016}.
For completeness, we review the SDP relaxation and the ADMM solver. 

In the absence of noise, the relation between the local and global coordinates are given by \eqref{LocGlob1} and \eqref{LocGlob2}. Since these are not expected to hold exactly in the presence of noise, the authors in \cite{CKS2015b} proposed to minimize the least-square objective
\begin{equation}
\label{LS}
\sum_{i=1}^{M} \Big(\sum_{k \in \cC_i \setminus \cA} \alpha_{k,i}^2  + \lambda \sum_{l \in \cC_i \cap \cA}   \beta_{l,i}^2 \Big),
\end{equation}
where 
\begin{equation*}
\alpha_{k,i}= \lVert \x_k - \bO_i \x_{k,i} - \t_i\rVert   \ \ \text{and} \ \ \beta_{l,i}=\lVert \bO_{M+1}\bar{\a}_l - \bO_i \bar{\a}_l - \t_i \rVert
\end{equation*}
are the registration errors for sensors and anchors. The scale $\lambda > 0$ is used to combine the gross errors. The variables are the sensor coordinates $\x_k$ and the rigid transformations $(\bO_i, \t_i)$. The dummy variable $\bO_{M+1}$ is introduced to make the objective homogenous \cite{CKS2015b}. In terms of the matrix variables
\begin{equation*}
\bZ = \left[ \x_1 \cdots \x_{N} \ \t_1 \dots \t_{M} \right] \quad \text{and} \quad \bO = \left[ \bO_1 \cdots \bO_{M+1} \right],
\end{equation*}
we can express \eqref{LS} as
\begin{equation}
\label{LS2}
\text{Trace} \left( \begin{bmatrix} \bZ & \bO \end{bmatrix} \begin{bmatrix} \bJ & -\bB^{\top} \\ -\bB & \bD  \end{bmatrix}  \begin{bmatrix} \bZ^{\top} \\ \bO^{\top} \end{bmatrix} \right),
\end{equation}
where
\begin{equation*}
\begin{aligned}
& \bJ = \sum_{i = 1}^{M} \Big[ \sum_{k \in \cC_i \setminus \cA}  \!\! \e_{k,i} \e_{k,i}^{\top} + \lambda \sum_{l \in \cC_i \cap \cA} \!\! \del_{N + i}^{N + M} {\del_{N + i}^{N + M}}^{\top} \Big], \\
& \bB = \sum_{i = 1}^{M+1} \Big[ \sum_{k \in \cC_i \setminus \cA} \!\! \left( \del_{i}^{M+1} \otimes \bI_d \right)\x_{k,i} \e_{k,i}^{\top} \\
& \quad + \lambda \sum_{l \in \cC_i \cap \cA} \!\! \left( \f_{i} \otimes \bI_d \right)\bar{\a}_l {\del_{N+i}^{N+M}}^{\top} \Big],\\
& \bD = \sum_{i = 1}^{M+1} \Big[ \sum_{k \in \cC_i \setminus \cA}  \!\!\left( \del_{i}^{M+1} \otimes \bI_d \right)\x_{k,i} \x_{k,i}^{\top} \left( \del_{i}^{M+1} \otimes \bI_d \right)^{\top}\\
& \quad + \lambda \sum_{l \in \cC_i \cap \cA} \!\!\left( \f_{i} \otimes \bI_d \right)\bar{\a}_l \bar{\a}_l^{\top} \left( \f_{i} \otimes \bI_d \right)^{\top} \Big].
\end{aligned}
\end{equation*}
Here, $\otimes$ is the Kronecker product, $\del_i^{L}$ is the all-zero vector of length $L$ with unity at the $i$-th position, 
\begin{equation*}
\e_{k,i} =  \del_{k}^{N+M}-\del_{N+i}^{N+M} \quad \text{and} \quad \f_i =\del_{M+1}^{M+1}-\del_{i}^{M+1}.
\end{equation*}
The minimum of \eqref{LS2} over $\bZ$ is attained when $\bZ^{\star} = \bO \bB \bJ^{-1}$. On substituting $\bZ^{\star}$ in \eqref{LS2}, we get the following problem:
\begin{equation}
\label{GRET}
\begin{aligned}
& \underset{\bG \succeq 0}{\text{min}} \quad \text{Trace} \left( \bC \bG \right) \\
& \text{s.t.} \quad \left[\bG\right]_{ii} =\bI_d \  (i = 1, \ldots, M\!+\!1), \ \text{rank}\left( \bG \right)=d.
\end{aligned}
\end{equation}
where $\bC = \bD - \bB \bJ^{-1} \bB^{\top}, \bG = \bO^{\top} \bO,$ and the $d \times d$ matrix $\left[\bG\right]_{ii}$ denotes the $i$-th diagonal block of $\bG$. It was observed in \cite{CKS2015a} that the objective and the constraints in \eqref{GRET} are convex, except for the rank condition. By dropping the rank constraint, the authors arrived at the following SDP relaxation:
\begin{equation}
\label{GRET-SDP}
 \min_{\bG \succeq 0} \ \mathrm{Trace}\left(\bC \bG \right) \ \text{s.t.} \ \left[\bG \right]_{ii}=\bI_d \ \ (i = 1, \ldots, M\!+\!1). 
\end{equation}
The global minimum of  \eqref{GRET-SDP} can be computed for small or even medium-sized problems using an interior-point solver \cite{Sturm1999}. However, such solvers are both memory and computation intensive. In particular, the cost of approximating the global minimum of \eqref{GRET-SDP} within a given accuracy is $\cO(\left(M d\right)^{4.5})$ \cite{CKS2015a}. Since $M$ is of the order $\cO(|\cN|)$ in our case, the size of the SDP variables can be few hundreds or thousands. Interior-point solvers run out of memory for such large problems. To achieve scalability, an iterative solver based on ADMM was proposed in \cite{SC2016}. The ADMM updates are summarized in Algorithm \ref{solver}, where 
$\S_+$ is the set of symmetric positive semidefinite matrices of size $L=(M+1)d$, $\Omega$ is the set of symmetric matrices of size $L$ whose $d \times d$ diagonal blocks are identity, and 
$\Pi_\cS (\bA)$ denotes the projection of $\bA$ onto a convex set $\cS$. Notice that the only non-trivial computation is determining $\Pi_{\S_+} (\bA)$. This is obtained by computing the eigendecomposition of $\bA$ and setting the negative eigenvalues to zero. The projection $\Pi_\Omega (\bA)$ amounts to setting the $d \times d$ diagonal blocks of $\bA$ to $\bI_d$, while keeping the non-diagonal blocks unchanged. We initialize $\bH$ using the spectral algorithm in \cite{CKS2015a}. The Lagrange multiplier $\Lam$ is initially set to be the zero matrix. We use a condition from \cite{BPCPE2011} to terminate the iterations. 

\begin{algorithm}
\label{solver}
\DontPrintSemicolon
\KwIn{$\bC, \rho > 0$}
\KwOut{$\bG$.}
Initialize $\bH$ and $\Lam.$\;
\While{stopping criteria is not met}
{
$\bG \longleftarrow \Pi_{\S_+} \left[\bH - \rho^{-1} \left(\bC-\Lam \right) \right]$.\; \label{Gupdate}
$\bH \longleftarrow \Pi_{\Omega} \left[\bG - \rho^{-1} \Lam \right]$.\; \label{Hupdate}
$\Lam \longleftarrow \Lam + \rho(\bH-\bG)$.\;
}
\caption{ADMM Solver.}
\end{algorithm}

We can establish the convergence of Algorithm \ref{solver} using the analysis in \cite{BPCPE2011}. In particular, we have the following result.
\begin{theorem}
\label{thmADMM}
Starting with $\bH^0$ and $\Lam^0$, let  $(\bG^{k},\bH^{k},\Lam^{k})_{k \geq 1}$ be the variables generated by Algorithm \ref{solver}. Then
\begin{itemize}
\item Objective convergence: If $F^*$ is the optimum of \eqref{GRET-SDP}, then
\begin{equation*}
\label{conv1}
\lim_{k \rightarrow \infty} \ \mathrm{Trace}(\bC \bG^k) = F^*.
\end{equation*}
\item Asymptotic feasibility: For $1 \leq i \leq M$,
\begin{equation*}
\label{conv2}
\lim_{k \rightarrow \infty} \ [\bG^k]_{ii} = \bI_d.
\end{equation*}
That is, $(\bG^k)$ approaches the feasible set in \eqref{GRET-SDP}.
\end{itemize}
\end{theorem}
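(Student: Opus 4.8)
The plan is to recognize Algorithm \ref{solver} as the classical two-block ADMM applied to a consensus reformulation of \eqref{GRET-SDP}, and then to invoke the standard ADMM convergence theorem from \cite{BPCPE2011}.

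First I would rewrite \eqref{GRET-SDP} as
\[
\min_{\bG,\bH} \ f(\bG) + g(\bH) \quad \text{s.t.} \quad \bG = \bH,
\]
where $f(\bG) = \mathrm{Trace}(\bC\bG) + \iota_{\S_+}(\bG)$, $g(\bH) = \iota_{\Omega}(\bH)$, and $\iota_{\mathcal{S}}$ denotes the indicator of a set $\mathcal{S}$. Since $\bC$ is symmetric, $\mathrm{Trace}(\bC\,\cdot\,)$ is linear, and $\S_+$, $\Omega$ are nonempty closed convex sets, the functions $f$ and $g$ are closed, proper, and convex. I would then check that Lines~\ref{Gupdate}--\ref{Hupdate} are precisely the ADMM updates for this splitting: forming the augmented Lagrangian for the constraint $\bH - \bG = \mathbf{0}$ with multiplier $\Lam$, the $\bG$-subproblem $\min_{\bG} \mathrm{Trace}(\bC\bG) + \iota_{\S_+}(\bG) + \tfrac{\rho}{2}\lVert \bH - \bG + \rho^{-1}\Lam \rVert^2$ reduces, after completing the square, to $\bG \leftarrow \Pi_{\S_+}[\bH - \rho^{-1}(\bC - \Lam)]$; the $\bH$-subproblem reduces to $\bH \leftarrow \Pi_{\Omega}[\bG - \rho^{-1}\Lam]$; and the last line is the usual dual-ascent step. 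This identifies Algorithm \ref{solver} with ADMM.

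Next I would verify the only nontrivial hypothesis of the ADMM convergence theorem, namely that the \emph{unaugmented} Lagrangian has a saddle point, equivalently that \eqref{GRET-SDP} admits a primal--dual optimal pair. For the primal part, $\bG \succeq 0$ together with $[\bG]_{ii} = \bI_d$ forces $\mathrm{Trace}(\bG) = (M+1)d$, so the feasible set is nonempty (it contains $\bI_L$ with $L=(M+1)d$) and compact, and the linear objective attains its minimum $F^*$ at some $\bG^\star = \bH^\star$. For the dual part I would appeal to strict feasibility: $\bI_L$ lies in the interior of $\S_+$ and in $\Omega$, so the relative interiors of the two constraint sets intersect; the subdifferential calculus then applies at $\bG^\star$ and yields a multiplier $\Lam^\star$ with $-\Lam^\star \in \partial f(\bG^\star)$ and $\Lam^\star \in \partial g(\bH^\star)$. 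The triple $(\bG^\star,\bH^\star,\Lam^\star)$ is the required saddle point.

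With both hypotheses established, the ADMM convergence theorem of \cite{BPCPE2011} gives residual convergence $\bG^k - \bH^k \to \mathbf{0}$ and objective convergence $f(\bG^k) + g(\bH^k) \to F^*$. By construction of the projections, every iterate satisfies $\bG^k \in \S_+$ and $\bH^k \in \Omega$, so $g(\bH^k) = 0$ and $f(\bG^k) = \mathrm{Trace}(\bC\bG^k)$; hence $\mathrm{Trace}(\bC\bG^k) \to F^*$, which is the first conclusion. For asymptotic feasibility, $\bH^k \in \Omega$ means $[\bH^k]_{ii} = \bI_d$ exactly, so $\lVert [\bG^k]_{ii} - \bI_d \rVert_F = \lVert [\bG^k - \bH^k]_{ii} \rVert_F \leq \lVert \bG^k - \bH^k \rVert_F \to 0$, giving the second conclusion. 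The main obstacle lies not in the convergence argument itself, which is a direct transcription of the cited theorem, but in the two verifications that feed it: matching the projection steps of Algorithm \ref{solver} to the ADMM subproblems (careful bookkeeping of the dual-variable sign convention in the augmented Lagrangian) and certifying the constraint qualification for existence of a dual optimal multiplier; both are handled by the compactness of the feasible set and the strict feasibility of $\bI_L$.
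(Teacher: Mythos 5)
Your proposal is correct and follows exactly the route the paper itself indicates: recasting Algorithm~\ref{solver} as two-block ADMM on the splitting $f(\bG)=\mathrm{Trace}(\bC\bG)+\iota_{\S_+}(\bG)$, $g(\bH)=\iota_{\Omega}(\bH)$ with constraint $\bG=\bH$, and invoking the convergence theorem of \cite{BPCPE2011} (the paper defers the details to that reference and to \cite{JSC2016} rather than writing them out). Your added verifications---compactness of the feasible set, strict feasibility at $\bI_L$ for saddle-point existence, and deducing $[\bG^k]_{ii}\to\bI_d$ from the vanishing residual $\bG^k-\bH^k$---are sound and simply make explicit what the paper leaves implicit.
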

In fact, since updates \ref{Gupdate} and \ref{Hupdate} in Algorithm \ref{solver} are convex projections, we can establish the Theorem \ref{thmADMM} using elementary results from convex analysis. This and other technical results will be reported separately \cite{JSC2016}.

Having approximated the optimal $\bG$ using Algorithm \ref{solver}, we compute  $\bO =[ \bO_1 \cdots  \bO_{M+1}]$ using the rounding in \cite{CKS2015a}. The first $N$ columns of $\bZ = \bO \bB \bJ^{-1}$ are taken to be the estimated sensor locations. As a final step, we refine the locations using stress minimization \cite{BLTYW2006} and denote the result as $\widehat{\x}_1, \dots , \widehat{\x}_N$.

\begin{figure}
	\centering
	\subfloat[$\text{ANE}= 1.9\mbox{e-}1$.]{\includegraphics[width= 0.48 \linewidth]{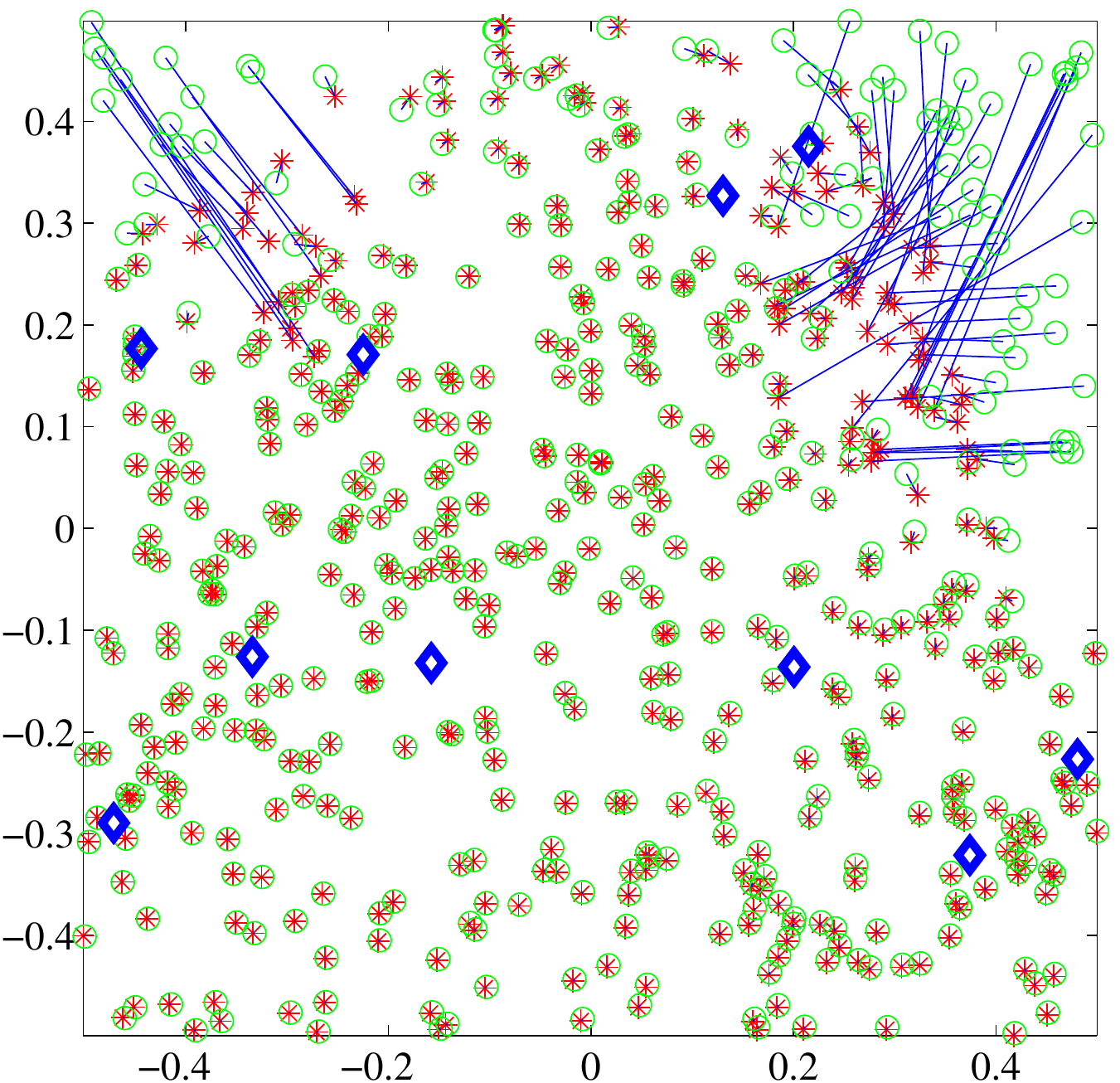}} \hspace{-0.5mm}
	\subfloat[$\text{ANE}=7.1\mbox{e-}12$.]{\includegraphics[width= 0.48 \linewidth]{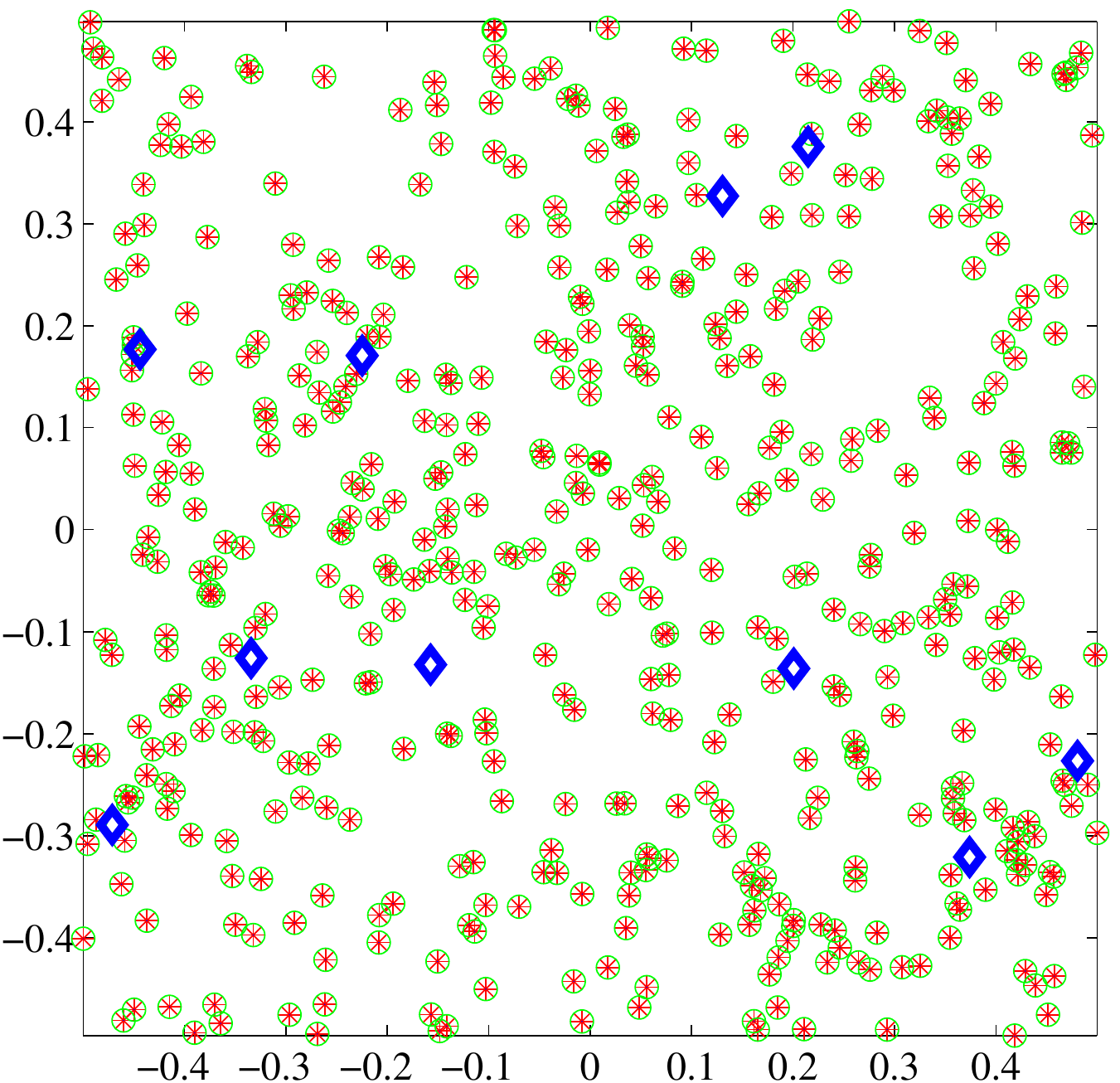}} \vspace{0.2mm}
	\subfloat[$\text{ANE}=8.6\mbox{e-}2$.]{\includegraphics[width= 0.48 \linewidth]{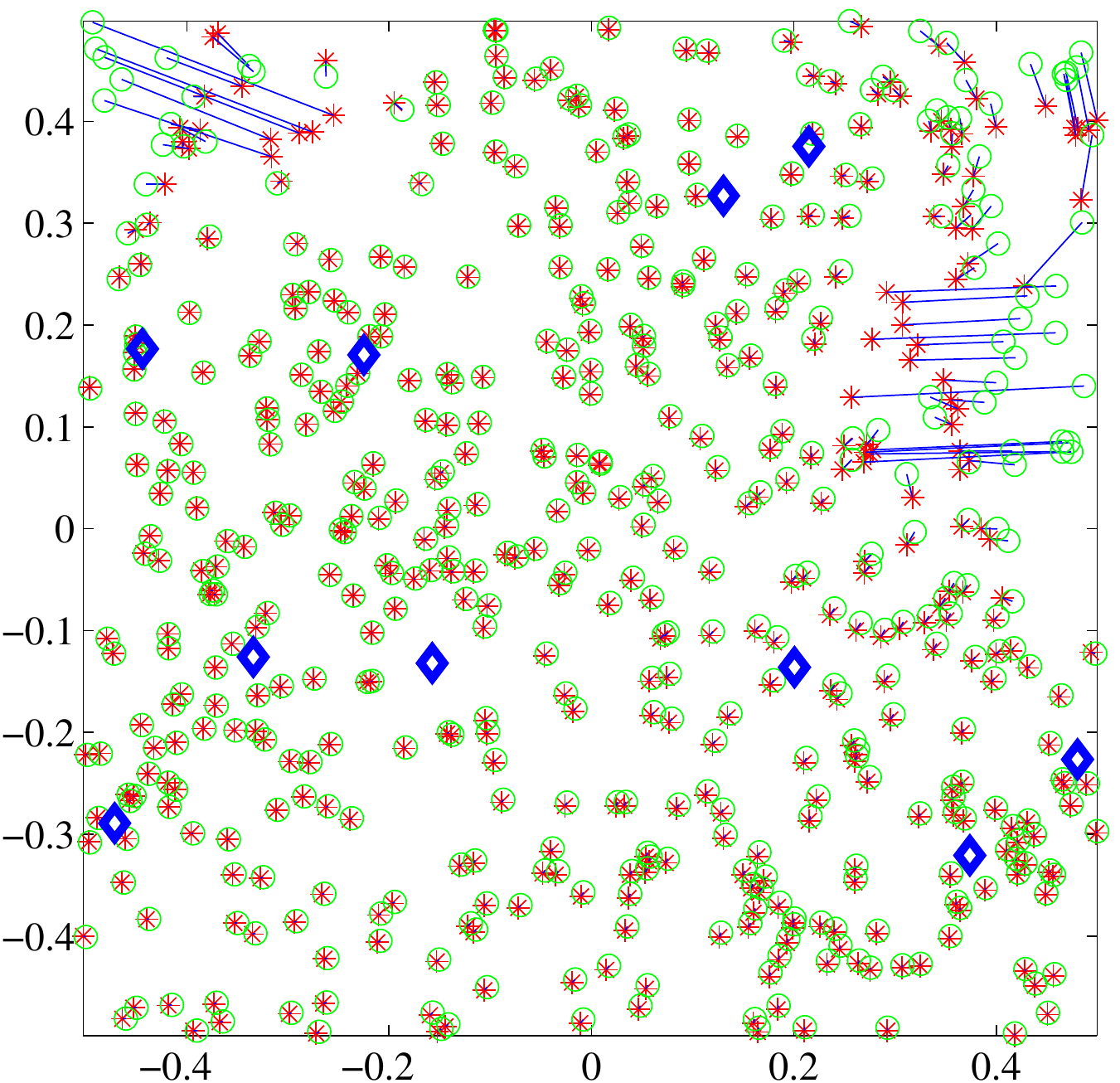}} \hspace{-0.5mm}
	\subfloat[$\text{ANE}=6.1\mbox{e-}3$.]{\includegraphics[width= 0.48 \linewidth]{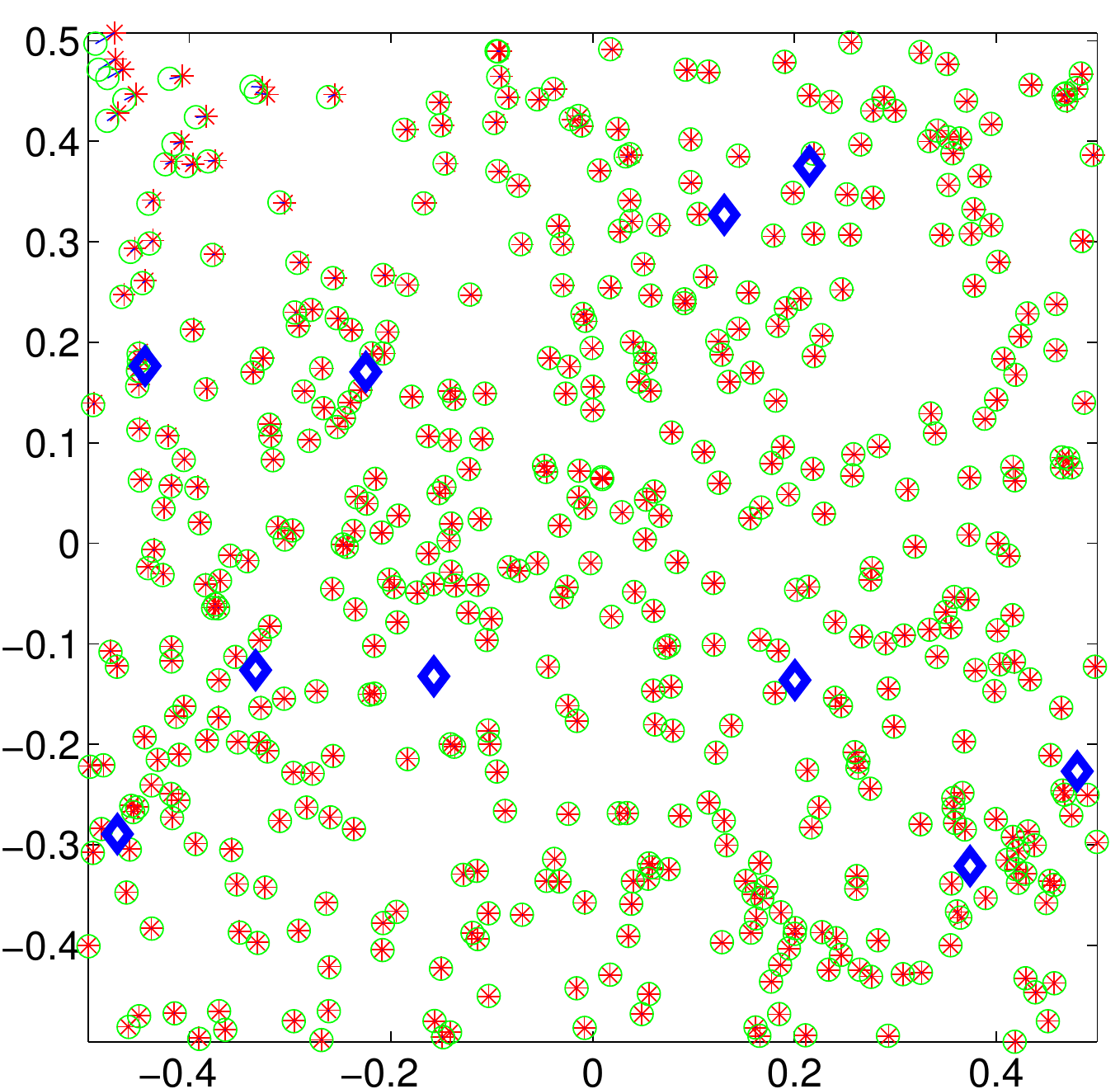}}
	\caption{Illustration of the impact of rigidity on the registration accuracy. The parameters for the RGG are $N = 500, K =10,$ and $r=0.17$. The top and bottom rows correspond to the noise levels $\eta = 0$ and $\eta = 0.01$. For a fixed $\eta$, the figure on the left corresponds to the situation where $\Gamma$ fails to be quasi $3$-connected, while that on the right corresponds to the situation where $\Gamma$ is modified to ensure quasi $3$-connectivity (using the procedure described in the text). Green circles (\textcolor{green}{$\circ$}) denote original sensor locations, red stars (\textcolor{red}{$\star$}) denote estimated locations, and blue diamonds (\textcolor{blue}{$\Diamond$}) denote anchor locations.}
\label{GammaRigid}
\end{figure}

\section{Numerical Experiments}
\label{exp}

In this section, we conduct numerical simulations to demonstrate the performance of the proposed method. In particular, we illustrate the impact of rigidity on the performance of the registration algorithm, and study the timing of different phases of the proposed method and the scaling of the localization error with the noise level. We then compare with some of the state-of-the-art algorithms \cite{SL2014,BLTYW2006,WZYB2008,SXG2015} in terms of accuracy, run-time, and scalability. The comparisons are performed on planar networks, namely,  the random geometric graph (RGG) \cite{BLTYW2006,WZYB2008,CKS2015b}, and the structured PACM logo \cite{CLS2012}. The diameter (maximum distance between any two points) of the logo is $16.92$. We consider the following noise model that was used in \cite{BLTYW2006, WZYB2008, CKS2015b}:
\begin{equation*}
d_{ij} =\lvert 1+\epsilon_{ij} \rvert \cdot \lVert \bar{\x}_i-\bar{\x}_j \rVert \qquad (i,j \in \cS),
\end{equation*}
and
\begin{equation*}
d_{ik} = \lvert 1+\epsilon_{ik}\rvert \cdot \lVert \bar{\x}_i - \bar{\a}_k \rVert \qquad (i \in \cS, k \in \cA),
\end{equation*}
where $\epsilon_{ij}$ and $\epsilon_{ik}$ are i.i.d. Gaussians with mean zero and standard deviation $\eta$. As mentioned earlier, we enforce symmetry by replacing $d_{ij}$ and $d_{ji}$ with their average \cite{SHCJ2010}. For a quantitative comparison of the localization accuracies, we use the average normalized error (ANE) \cite{CLS2012} given by
\begin{equation*}
	\text{ANE}=\left\{\frac{\sum_{i=1}^{N} \lVert \widehat{\x}_i - \bar{\x}_i \rVert^2}{\sum_{i=1}^{N} \lVert \bar{\x}_i - \bar{\x}_c \rVert^2} \right\}^{1/2},
\end{equation*}
where $\bar{\x}_c$ is the centroid of the original sensor locations. Of course, we assume that the reconstruction has been optimally aligned with the ground truth before computing the ANE \cite{AHB1987}. We also present visual comparison of the localizations obtained using different methods. For all experiments, we have used $\lambda=1$ in \eqref{LS} and $\rho=0.01$ for the augmented Lagrangian.

\begin{table}[!htb]
	\centering
	\caption{Run-times (in seconds) of different phases of the algorithm -- partitioning $(t_1)$, localization $(t_2)$ and registration $(t_3)$.}
	\label{SelfCompare}
		\begin{tabular}{|c|c|c|c|c|c|c|}
			\hline
			$N$                 & $K$                & $r$                   & $\eta$ & $t_1$ & $t_2 $ & $t_3$ \\ \hline
			\multirow{2}{*}{100}  & \multirow{2}{*}{10}  & \multirow{2}{*}{0.4}  & 0        & 0.79   & 0.04  & 0.02 \\ \cline{4-7} 
			&                      &                       & 0.1      & 0.73   & 0.06  & 0.92   \\ \hline
			\multirow{2}{*}{500}  & \multirow{2}{*}{50}  & \multirow{2}{*}{0.18} & 0        & 3.9   & 0.07   & 0.21 \\ \cline{4-7} 
			&                      &                       & 0.1      & 3.9   & 0.1   & 12.8  \\ \hline
			\multirow{2}{*}{800}  & \multirow{2}{*}{80}  & \multirow{2}{*}{0.14} & 0        & 7.2   & 0.1   & 0.8  \\ \cline{4-7} 
			&                      &                       & 0.1      & 7.5   & 0.2   & 70.9 \\ \hline
			\multirow{2}{*}{1000} & \multirow{2}{*}{100} & \multirow{2}{*}{0.12} & 0        & 9.2   & 0.1    & 1.7   \\ \cline{4-7} 
			&                      &                       & 0.1      & 8.1  & 0.2     & 1.5 \\ \hline
		\end{tabular}
\end{table}

\begin{figure}
   \centering
	\includegraphics[width= 0.5\linewidth]{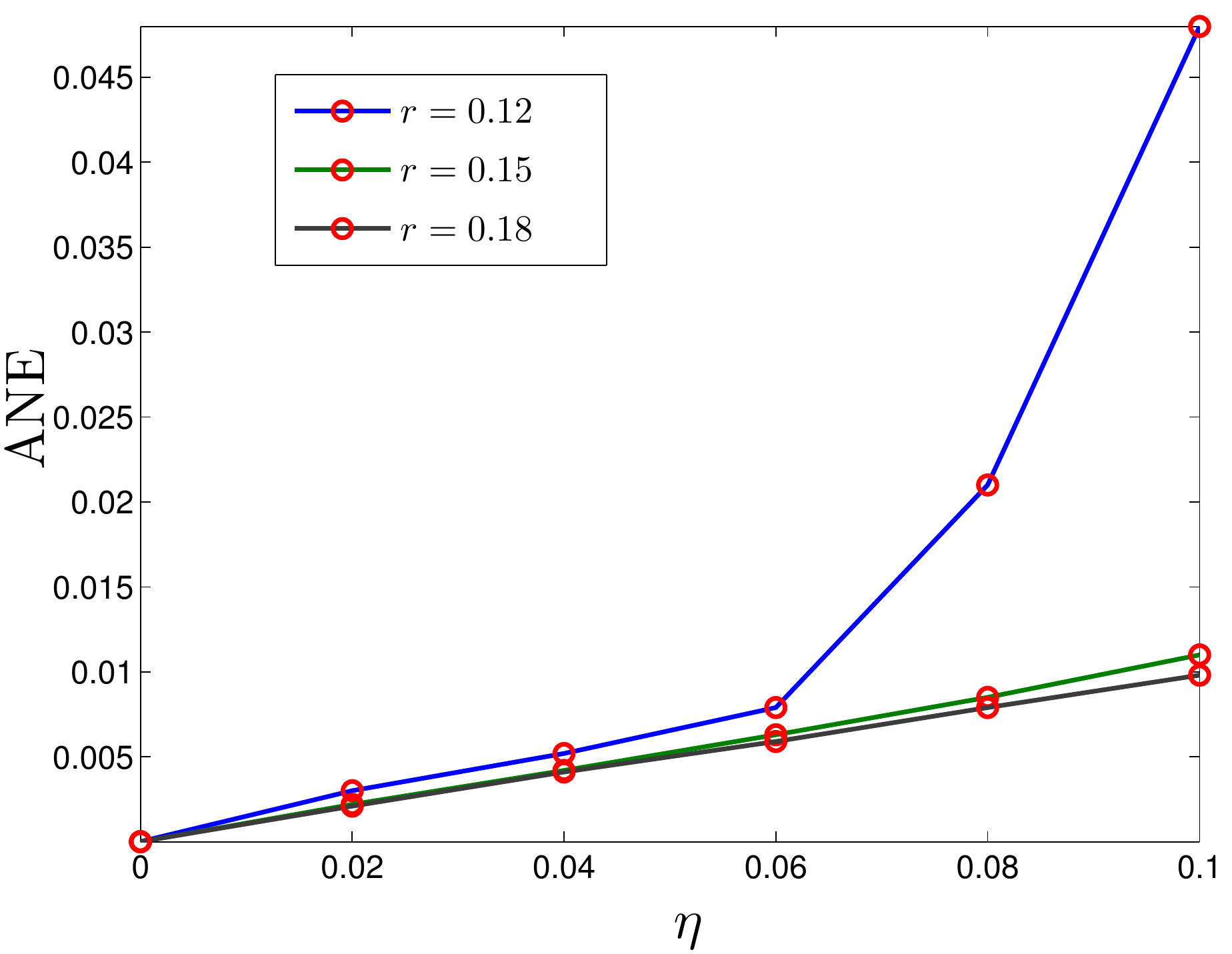}
	\caption{ANE versus noise level $\eta$ for a RGG consisting of $500$ sensors and $50$ anchors. We consider the radio ranges $r=0.12,0.15,\text{and } 0.18$.}
	\label{Self_perfor}
\end{figure}

\subsection{Performance Analysis}

\begin{table*}[!htbp]
\centering
\caption{Comparison of the run-time of the proposed method with that of \texttt{SNLSDP} \cite{BLTYW2006},  \texttt{ESDP} \cite{WZYB2008} and the localization accuracy of the proposed method with \texttt{E-ML} \cite{SL2014}, \texttt{SNLSDP} \cite{BLTYW2006},  \texttt{ESDP} \cite{WZYB2008} and \texttt{SNLDR} \cite{SXG2015}, for random geometric graphs on the unit-square. The run-time and ANE were averaged over $10$ realizations of the random graph and the measured distances. We have used `$-$' to mark those instances where the algorithm took indefinite time to solve the problem. The instances where the interior point solver ran out of memory are marked with $\star$.}
\label{Compare}

\begin{tabular}{cccc|c|c|c||c|c|c|c|c|}
\cline{5-12}   &                                             &                                              &          & \multicolumn{3}{c||}{Time}                                                                      & \multicolumn{5}{c|}{Accuracy (ANE)}                                                 \\ \hline
\multicolumn{1}{|c|}{$N$}                   & \multicolumn{1}{c|}{$K$}                  & \multicolumn{1}{c|}{$r$}                     & $\eta$ & Proposed & \texttt{ESDP} \cite{WZYB2008} & \texttt{SNLSDP} \cite{BLTYW2006} & Proposed & \texttt{ESDP} \cite{WZYB2008} & \texttt{SNLDR} \cite{SXG2015} & \texttt{SNLSDP} \cite{BLTYW2006} & \texttt{E-ML} \cite{SL2014} \\ \hline
\multicolumn{1}{|c|}{\multirow{2}{*}{$10$}}   & \multicolumn{1}{c|}{\multirow{2}{*}{$5$}}   & \multicolumn{1}{c|}{\multirow{2}{*}{$1.25$}} & $0$      & $0.1$sec           & $0.2$sec          & $0.3$sec                    & $3.9\mbox{e-}16$ & $7.9\mbox{e-}8$   & $1.3\mbox{e-}4$  & $1.3\mbox{e-}9$    & $1.4\mbox{e-}3$ \\ \cline{4-12} 
\multicolumn{1}{|c|}{}                        & \multicolumn{1}{c|}{}                       & \multicolumn{1}{c|}{}                        & $0.1$    & $0.2$sec           & $0.3$sec          & $0.4$sec                    & $9.6\mbox{e-}2$  & $9.5\mbox{e-}2$   & $2\mbox{e-}1$    & $9.6\mbox{e-}2$    & $1.3\mbox{e-}1$ \\ \hline
\multicolumn{1}{|c|}{\multirow{2}{*}{$20$}}   & \multicolumn{1}{c|}{\multirow{2}{*}{$6$}}   & \multicolumn{1}{c|}{\multirow{2}{*}{$0.88$}} & $0$      & $0.3$sec           & $0.6$sec          & $0.5$sec                    & $1.3\mbox{e-}15$ & $1.1\mbox{e-}8$   & $1\mbox{e-}4$  & $1.1\mbox{e-}8$    & $2.5\mbox{e-}3$ \\ \cline{4-12} 
\multicolumn{1}{|c|}{}                        & \multicolumn{1}{c|}{}                       & \multicolumn{1}{c|}{}                        & $0.1$    & $0.6$sec           & $0.6$sec          & $1$sec                      & $6.4\mbox{e-}2$  & $8.8\mbox{e-}2$   & $1.6\mbox{e-}1$    & $6.4\mbox{e-}2$    & $9.2\mbox{e-}2$ \\ \hline
\multicolumn{1}{|c|}{\multirow{2}{*}{$40$}}   & \multicolumn{1}{c|}{\multirow{2}{*}{$8$}}   & \multicolumn{1}{c|}{\multirow{2}{*}{$0.63$}} & $0$      & $0.5$sec           & $3.3$sec          & $0.6$sec                    & $2.3\mbox{e-}15$ & $4.1\mbox{e-}8$   & $1\mbox{e-}2$  & $1.2\mbox{e-}9$    & $2.3\mbox{e-}3$ \\ \cline{4-12} 
\multicolumn{1}{|c|}{}                        & \multicolumn{1}{c|}{}                       & \multicolumn{1}{c|}{}                        & $0.1$    & $1.2$sec           & $1.2$sec          & $0.8$sec                    & $4\mbox{e-}2$  & $7.3\mbox{e-}2$   & $1.5\mbox{e-}1$    & $4\mbox{e-}2$    & $9.6\mbox{e-}2$ \\ \hline
\multicolumn{1}{|c|}{\multirow{2}{*}{$200$}}  & \multicolumn{1}{c|}{\multirow{2}{*}{$24$}}  & \multicolumn{1}{c|}{\multirow{2}{*}{$0.28$}} & $0$      & $2$sec             & $30$sec      & $19$sec                          & $4\mbox{e-}14$   & $3.1\mbox{e-}7$   & $1.4\mbox{e-}2$  & $2.5\mbox{e-}9$    & $-$             \\ \cline{4-12} 
\multicolumn{1}{|c|}{}                        & \multicolumn{1}{c|}{}                       & \multicolumn{1}{c|}{}                        & $0.1$    & $4$sec             & $7$sec       & $4$sec                           & $1.7\mbox{e-}2$  & $3.1\mbox{e-}2$   & $1.1\mbox{e-}1$  & $1.7\mbox{e-}2$    & $-$             \\ \hline
\multicolumn{1}{|c|}{\multirow{2}{*}{$500$}}  & \multicolumn{1}{c|}{\multirow{2}{*}{$54$}}  & \multicolumn{1}{c|}{\multirow{2}{*}{$0.18$}} & $0$      & $5$sec             & $1.5$min     & $5.8$min                         & $4.7\mbox{e-}14$ & $1.5\mbox{e-}7$   & $1.6\mbox{e-}2$  & $8.8\mbox{e-}7$    & $-$             \\ \cline{4-12} 
\multicolumn{1}{|c|}{}                        & \multicolumn{1}{c|}{}                       & \multicolumn{1}{c|}{}                        & $0.1$    & $1$min       & $25$sec            & $7.6$min                         & $1\mbox{e-}2$    & $2\mbox{e-}2$      & $7.2\mbox{e-}2$  & $1\mbox{e-}2$      & $-$             \\ \hline
\multicolumn{1}{|c|}{\multirow{2}{*}{$1000$}} & \multicolumn{1}{c|}{\multirow{2}{*}{$104$}} & \multicolumn{1}{c|}{\multirow{2}{*}{$0.12$}} & $0$      & $10$sec            & $2.6$min     & $26$min                          & $1.3\mbox{e-}13$ & $9.9\mbox{e-}7$   & $5.9\mbox{e-}3$  & $3.6\mbox{e-}2$    & $-$             \\ \cline{4-12} 
\multicolumn{1}{|c|}{}                        & \multicolumn{1}{c|}{}                       & \multicolumn{1}{c|}{}                        & $0.1$    & $5.4$min      & $1.2$min          & $26$min                          & $7\mbox{e-}3$    & $1.3\mbox{e-}2$   & $4.6\mbox{e-}2$  & $4.1\mbox{e-}2$   & $-$             \\ \hline
\multicolumn{1}{|c|}{\multirow{2}{*}{$4000$}} & \multicolumn{1}{c|}{\multirow{2}{*}{$404$}} & \multicolumn{1}{c|}{\multirow{2}{*}{$0.06$}} & $0$      & $3.2$min      & $32.8$min         & $\star$                          & $5.6\mbox{e-}13$ & $2.1\mbox{e-}7$   & $-$             & $-$                & $-$             \\ \cline{4-12} 
\multicolumn{1}{|c|}{}                        & \multicolumn{1}{c|}{}                       & \multicolumn{1}{c|}{}                        & $0.05$   & $4.2$min      & $32.8$min         & $\star$                          & $1.7\mbox{e-}3$  & $3.2\mbox{e-}3$   & $-$              & $-$                & $-$             \\ \hline
\multicolumn{1}{|c|}{\multirow{2}{*}{$6000$}} & \multicolumn{1}{c|}{\multirow{2}{*}{$604$}} & \multicolumn{1}{c|}{\multirow{2}{*}{$0.05$}} & $0$      & $8.5$min      & $1$hr             & $\star$                          & $7.6\mbox{e-}13$ & $2.3\mbox{e-}3$   & $-$             & $-$                & $-$             \\ \cline{4-12} 
\multicolumn{1}{|c|}{}                        & \multicolumn{1}{c|}{}                       & \multicolumn{1}{c|}{}                        & $0.05$   & $6.8$min      & $42.2$min         & $\star$                          & $1.3\mbox{e-}3$  & $3.4\mbox{e-}3$   & $-$             & $-$                & $-$             \\ \hline
\multicolumn{1}{|c|}{\multirow{2}{*}{$8000$}} & \multicolumn{1}{c|}{\multirow{2}{*}{$804$}} & \multicolumn{1}{c|}{\multirow{2}{*}{$0.04$}} & $0$      & $15.3$min     & $1.4$hr           & $\star$                          & $2\mbox{e-}12$   & $1.1\mbox{e-}5$   & $-$             & $-$                & $-$             \\ \cline{4-12} 
\multicolumn{1}{|c|}{}                        & \multicolumn{1}{c|}{}                       & \multicolumn{1}{c|}{}                        & $0.01$   & $20$min          & $1.4$hr        & $\star$                          & $2.5\mbox{e-}4$  & $4.3\mbox{e-}4$   & $-$             & $-$                & $-$             \\ \hline
\end{tabular}
\end{table*}

To assess the performance of our method, we present simulation results on RGGs over the unit square \cite{BLTYW2006,WZYB2008,CKS2015b,CLS2012}. To generate a RGG, we  uniformly sample $N$ points on the unit square $\left[-0.5,0.5\right]^2$ and fix them to be the sensors. We additionally pick $K$ points at random from the square (distinct from the sensors) and fix them to be the anchors. We assume that the distance between two sensors, or between a sensor and an anchor, is known if it is at most $r$. 

\noindent \textbf{\underline{Experiment 1}:} To understand the importance of rigidity, a network consisting of $500$ sensors and $10$ anchors is considered where $r$ is taken to be $0.17$. We generate several instances of random graphs with the above parameters until we have an instance where the corresponding $\Gamma$ fails to be quasi $3$-connected. We run our algorithm at noise levels $\eta = 0$ and $0.1$ on these instances and record the localization results. We then augment the existing clique system to ensure that $\Gamma$ is quasi $3$-connected. This is done using the heuristics proposed in Section \ref{GP}. We again run our algorithm at noise levels $\eta = 0$ and $0.1$ and record the results. A particular instance is reported in Figure \ref{GammaRigid}. We notice that the proposed algorithm performs poorly if $\Gamma$ is not quasi $3$-connected. In particular, notice that the registration mechanism fails in specific regions of the network. This can be attributed to the ``fold-over'' phenomena associated with patches that are loosely connected to the rest of the patch system \cite{ZLGG2010}. However, when $\Gamma$ is forced to be quasi $3$-connected, we notice that the registration output improves significantly for both the noiseless and noisy cases. In fact, we achieve almost machine-level precision for the noiseless case.

\noindent \textbf{\underline{Experiment 2}:} In Table \ref{SelfCompare}, we report the run-times of the three phases of the algorithm for networks of different sizes (we round $K$ to $10\% $ of $N$ in each case). The experiments were performed using MATLAB $8.2$ on a $4$-core workstation with $3.4$ GHz processor and $32$ GB memory. Notice that the timing of the localization and the partitioning phase increases almost linearly with the number of sensors. Interestingly, the timing does not vary much with the noise level for a fixed $N$. However, the timing of the final registration phase appears to depend heavily on the noise level. An explanation for this is that we use the solution of the spectral relaxation of \eqref{GRET-SDP} as an initialization for the ADMM solver \cite{CKS2015a}. If the spectral relaxation turns out to be a good approximation of the optimal solution, then the ADMM solver converges in few iterations. Else, a large number of ADMM iterations are required. 

\noindent \textbf{\underline{Experiment 3}:} As a final analysis, we study the scaling of ANE with the noise level, when $r =  0.12,0.15, \text{ and } 0.18$. A fixed network consisting of $500$ sensors and $50$ anchors was used. For a particular $\eta$ and $r$, we averaged the ANEs obtained over $10$ realizations of the random graph and the measured distances. The results from a typical experiment are plotted in Figure \ref{Self_perfor}. We notice that the ANE increases almost linearly with $\eta$ when $r =  0.12$ and $0.15$. However, when $r=0.12$, the ANE tends to increase abruptly at large noise levels. The reason for this is that the registration process can fail when $r$ is low and $\eta$ is large (low signal-to-noise ratio scenario). 

\subsection{Comparison}

We now compare the proposed method with the following optimization-based methods: Edge-based Maximum Likelihood (\texttt{E-ML}) relaxation \cite{SL2014}, SNL using SDP (\texttt{SNLSDP}) \cite{BLTYW2006}, Edge-based SDP (\texttt{ESDP}) relaxation \cite{WZYB2008}, SNL using Disk Relaxation (\texttt{SNLDR}) \cite{SXG2015}. \texttt{E-ML} uses distributed optimization to minimize a surrogate of the ML estimator for the distance measurements in SNL. \texttt{SNLDR} uses distributed optimization for a novel convex relaxation of the SNL problem. On the other hand, \texttt{SNLSDP} is a centralized algorithm which is based on an SDP-based relaxation of the SNL problem. \texttt{ESDP} is a further relaxation of \texttt{SNLSDP} that can handle large networks.

\noindent \textbf{\underline{Experiment 4}:} The proposed method is compared with \texttt{E-ML}, \texttt{SNLSDP}, \texttt{ESDP} and \texttt{SNLDR} on random geometric graphs. The results are reported in Table \ref{Compare}. For a fair comparison with \texttt{SNLDR}, we additionally placed an anchor at each of the four corners of the unit square to ensure that the sensors are in the convex hull of the anchors \cite{SXG2015}. The localization obtained using our method is comparable with that obtained from \texttt{SNLSDP} for small networks ($N \leq 500$). The performance of \texttt{SNLSDP} starts degrading when $N > 500$, and it cannot handle large networks ($N>1000$). On the other hand, the proposed method is able to maintain its performance across networks of all sizes. Notice that, though \texttt{ESDP} can scale up to networks of size $8000$, its performance  falls off abruptly when $N>4000$.  The proposed method is generally faster than \texttt{ESDP} and \texttt{SNLSDP}.

\begin{table*}[!htbp]
\centering
\caption{Visual comparison of  the proposed algorithm with \texttt{SNLSDP} \cite{BLTYW2006}, \texttt{ESDP} \cite{WZYB2008} and \texttt{SNLDR} \cite{SXG2015} for a random geometric graph on the unit square consisting of $100$ sensors and $14$ anchors. The radio-range used is $r=0.4$. See Figure \ref{GammaRigid} for a description of the symbols \textcolor{green}{$\circ$}, \textcolor{red}{$\star$}, and \textcolor{blue}{$\Diamond$}. The (ANE, Run-time) are reported in the caption.}
\label{my-labe1}
\begin{tabular}{|c|m{3.8cm}|m{3.8cm}|m{3.8cm}|m{3.8cm}|}
\cline{1-5}
$\eta$        & \hspace{1.4cm} Proposed & \hspace{1.4cm} \texttt{SNLSDP} \cite{BLTYW2006} & \hspace{1.4cm} \texttt{ESDP} \cite{WZYB2008} & \hspace{1.4cm} \texttt{SNLDR} \cite{SXG2015} \\ \hline
 $0$  &    \raisebox{-\totalheight}{\includegraphics[width= \linewidth]{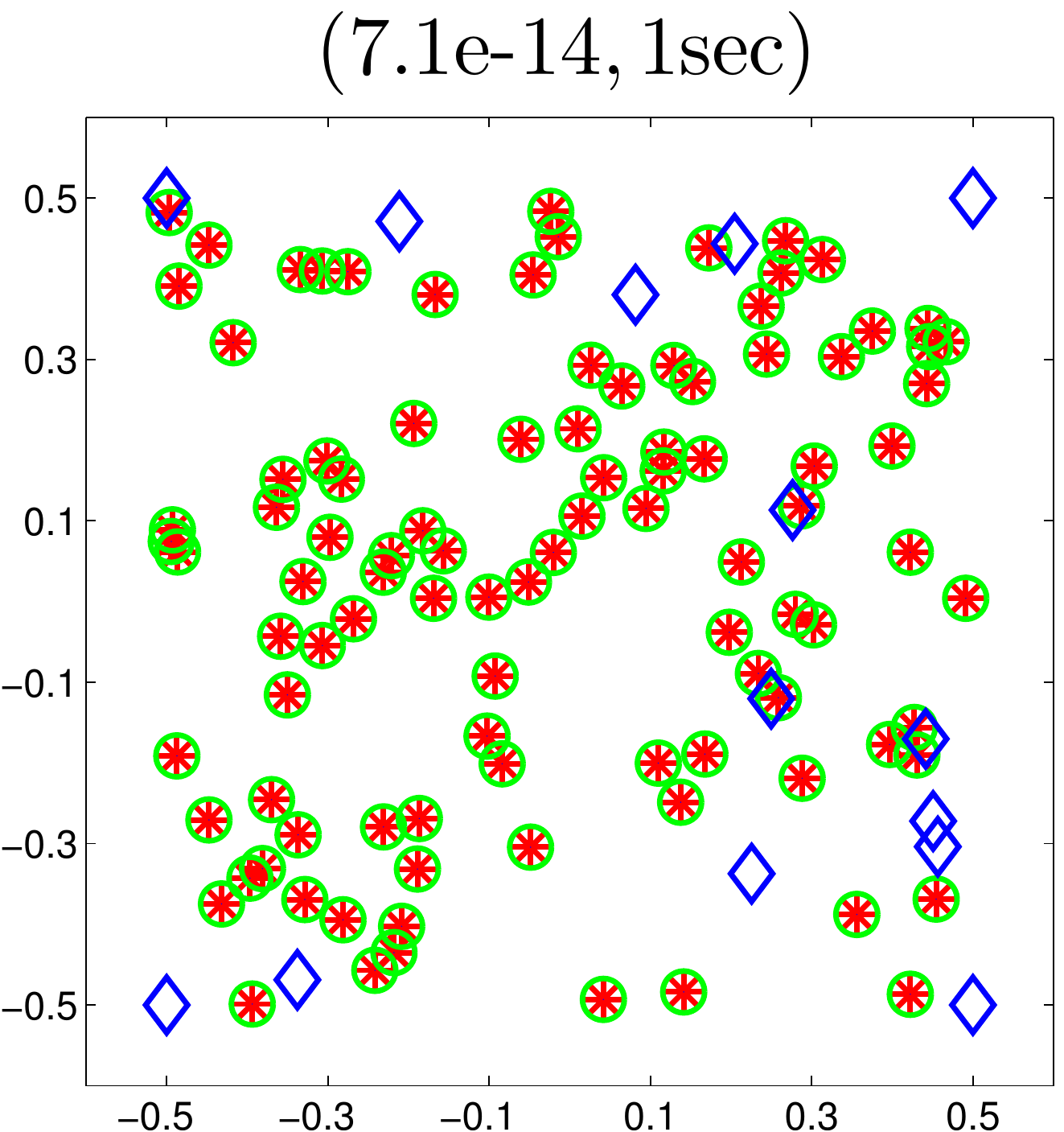}} \vspace{0.5mm}&     \raisebox{-\totalheight}{\includegraphics[width= \linewidth]{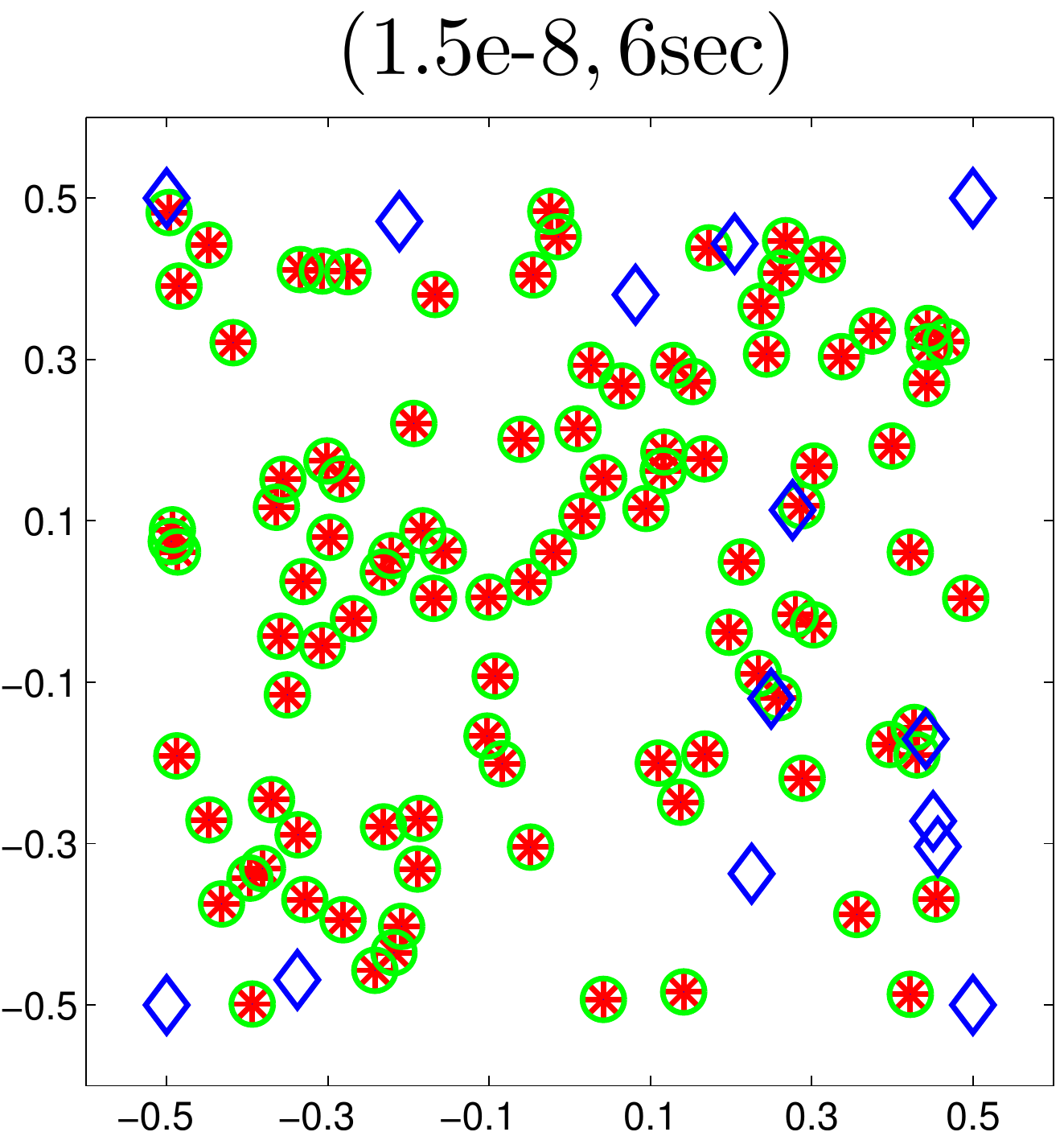}} \vspace{0.5mm} &                                  \raisebox{-\totalheight}{\includegraphics[width= \linewidth]{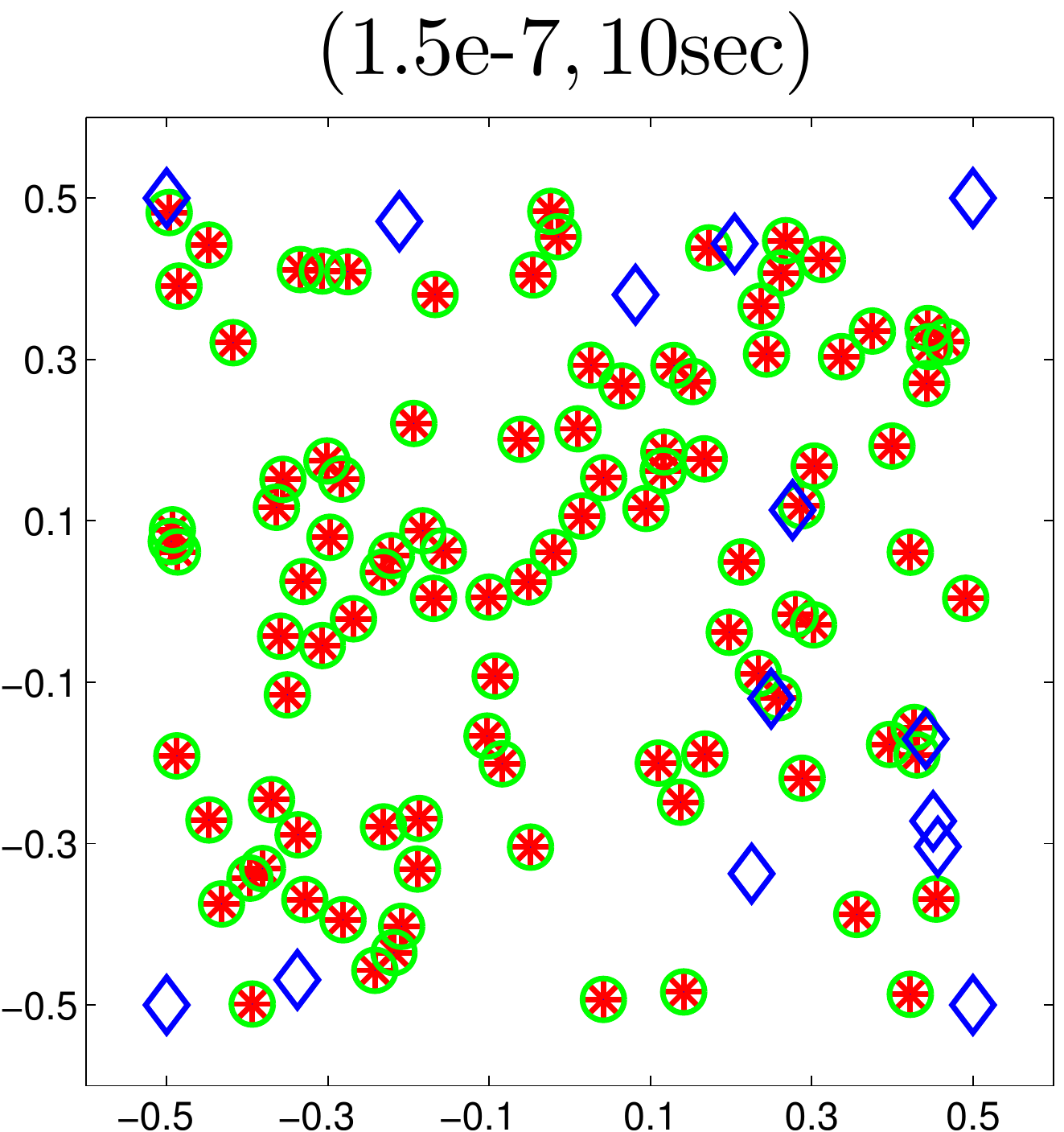}}  \vspace{0.5mm} & \raisebox{-\totalheight}{\includegraphics[width= \linewidth]{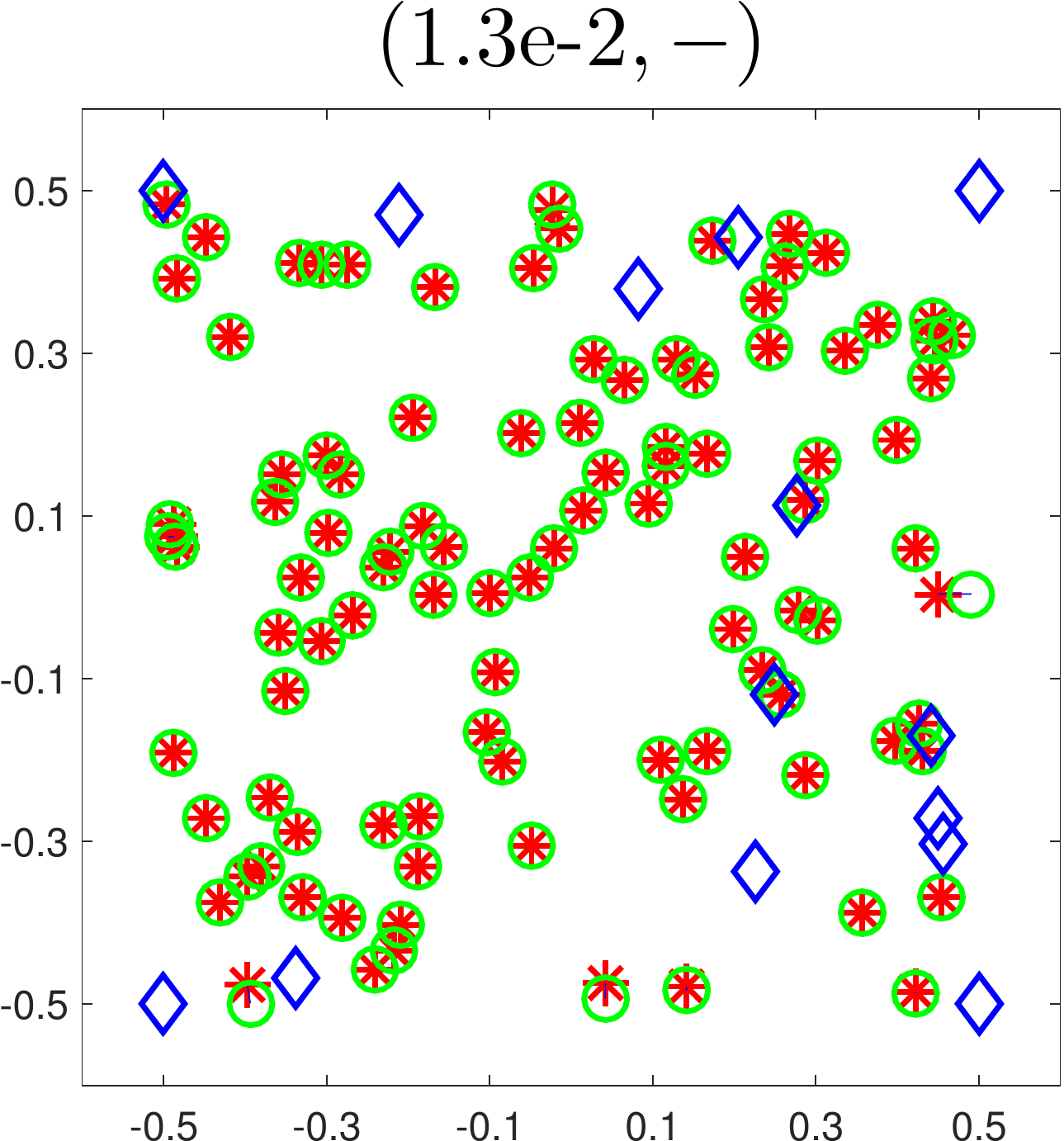}} \vspace{0.5mm} \\ \cline{1-5} 
 $0.1$  &    \raisebox{-\totalheight}{\includegraphics[width= \linewidth]{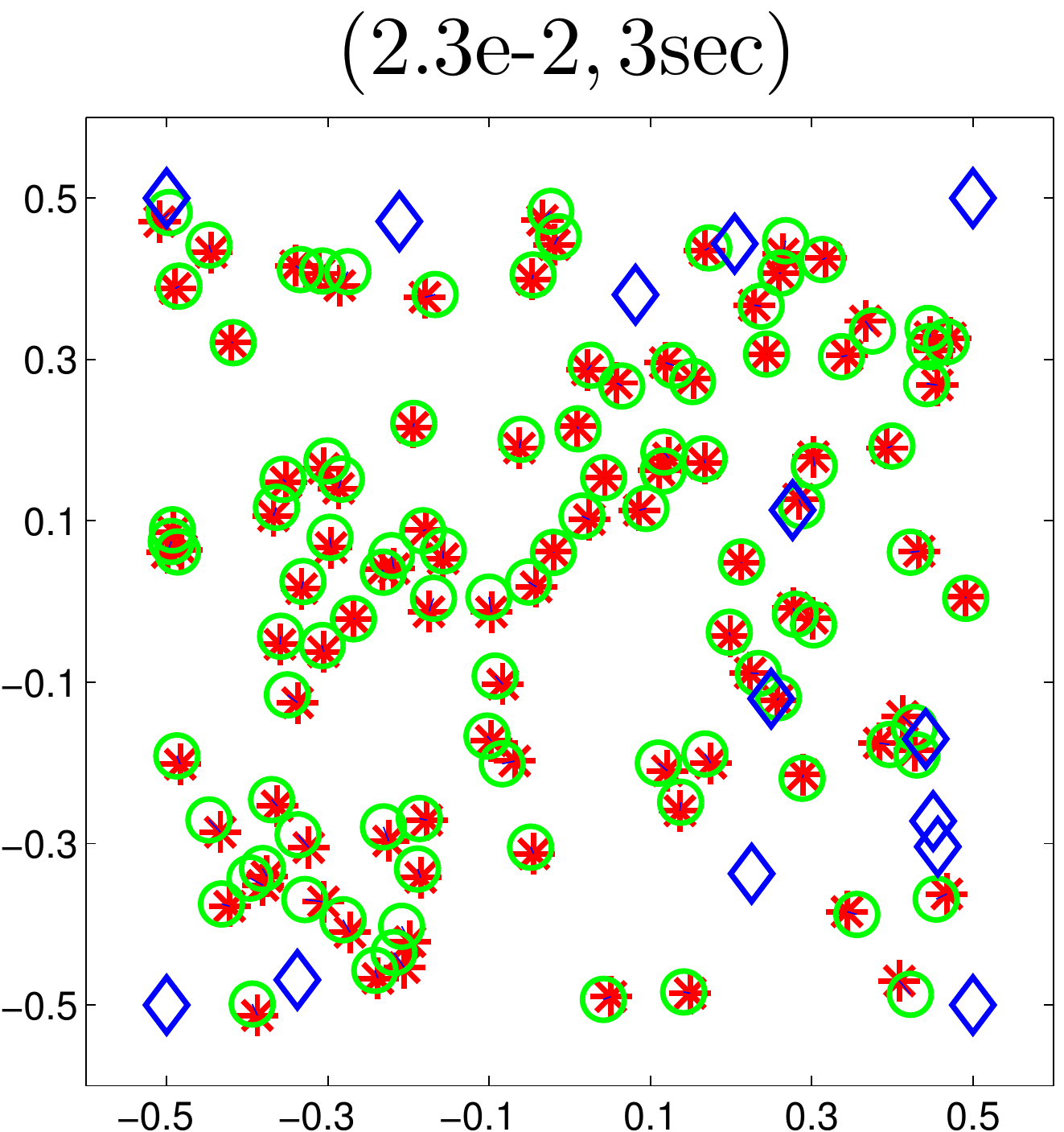}} \vspace{0.5mm} &     \raisebox{-\totalheight}{\includegraphics[width= \linewidth]{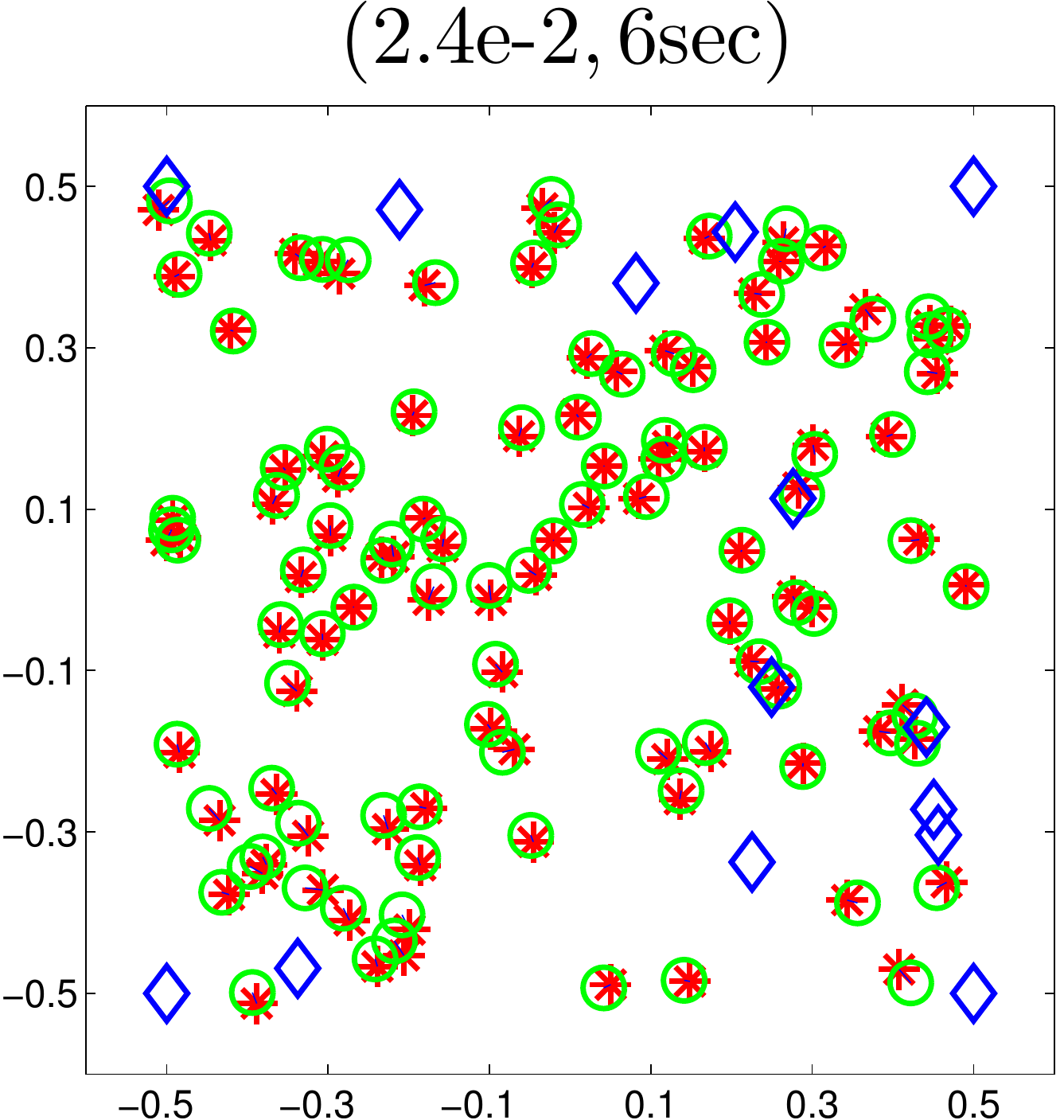}} \vspace{0.5mm}    &                                  \raisebox{-\totalheight}{\includegraphics[width= \linewidth]{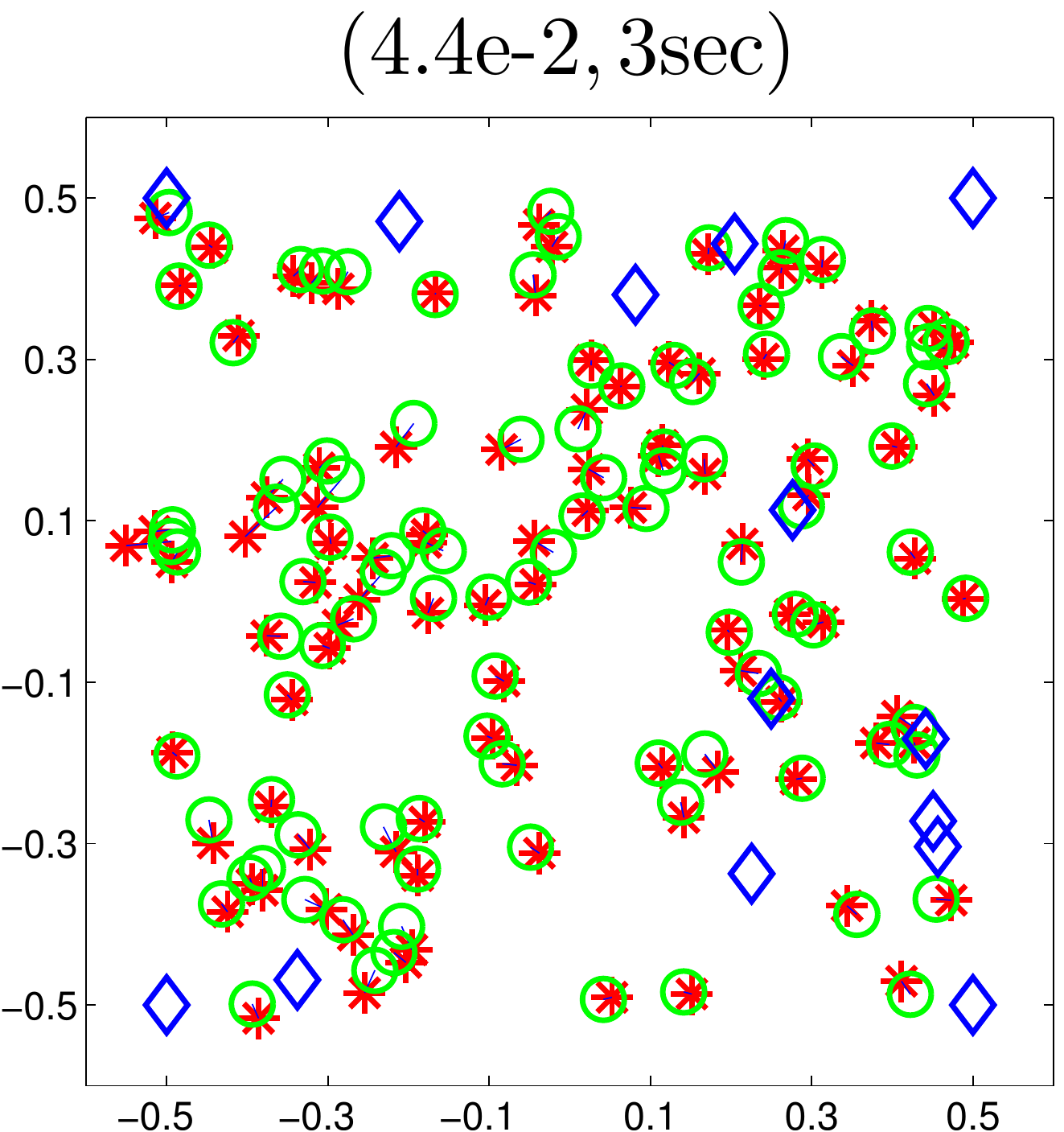}} \vspace{0.5mm} &        \raisebox{-\totalheight}{\includegraphics[width= \linewidth]{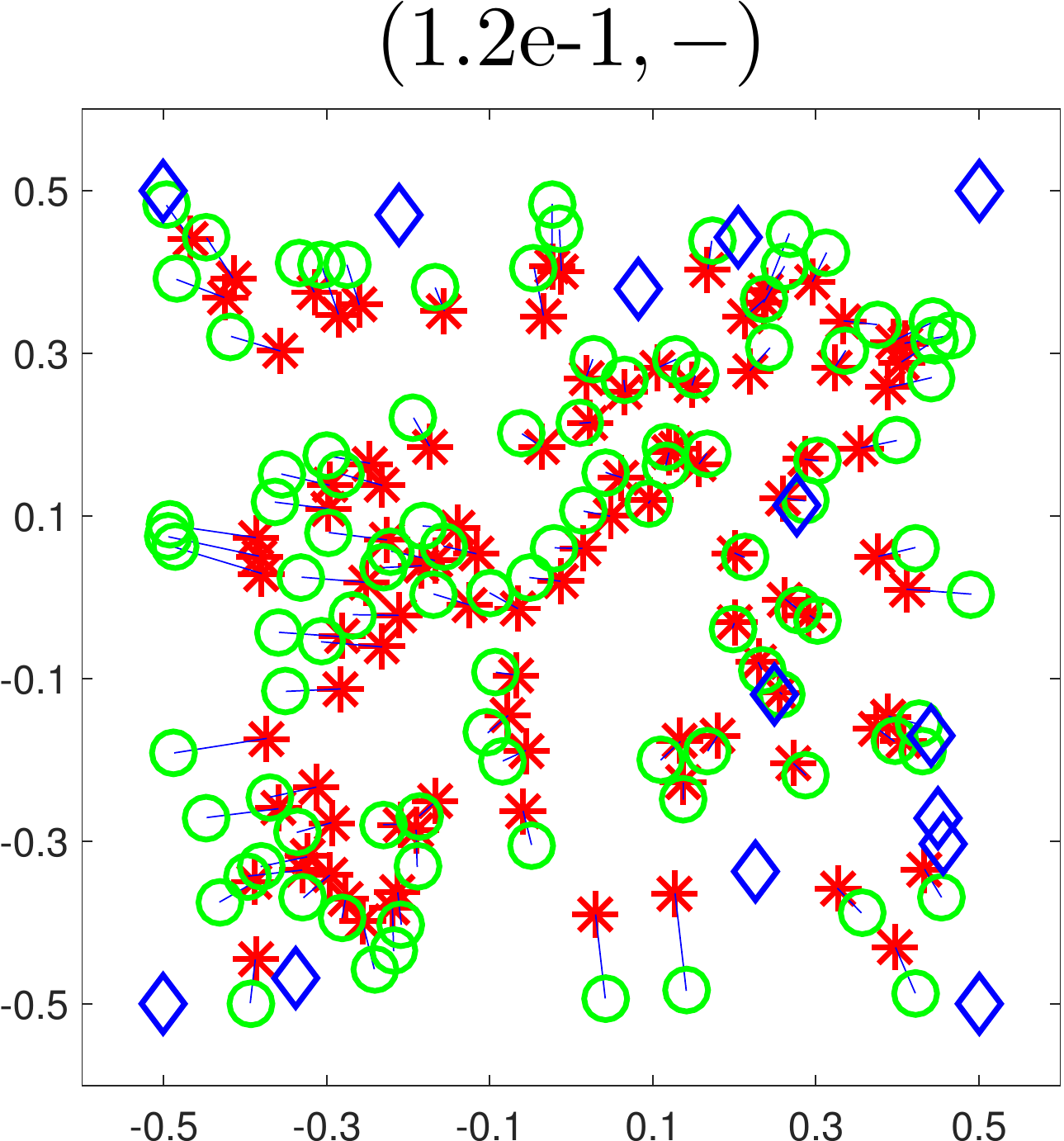}} \vspace{0.5mm} \\ \hline
\end{tabular}
\end{table*}

\begin{table*}[!htbp]
\centering
\caption{Comparison of localization results for the PACM logo \cite{CLS2012}. The parameters are $N=382,K=43$, and $r=1.9$. Blue circles (\textcolor{blue}{$\circ$}) denote original (column 1) and reconstructed (columns 2-5) sensor locations, and red diamonds (\textcolor{red}{$\Diamond$}) denote anchor locations.}
\label{my-labe2}
\begin{tabular}{|c|m{3cm}|m{3cm}|m{3cm}|m{3cm}|m{3cm}|}
\cline{1-6}
$\eta$     & \hspace{0.8cm} Original   & \hspace{0.8cm} Proposed  & \hspace{0.8cm} \texttt{SNLSDP} \cite{BLTYW2006} & \hspace{0.8cm} \texttt{ESDP} \cite{WZYB2008} & \hspace{0.8cm} \texttt{SNLDR} \cite{SXG2015} \\ \hline
 $0$    & \raisebox{-\totalheight}{\includegraphics[width= \linewidth]{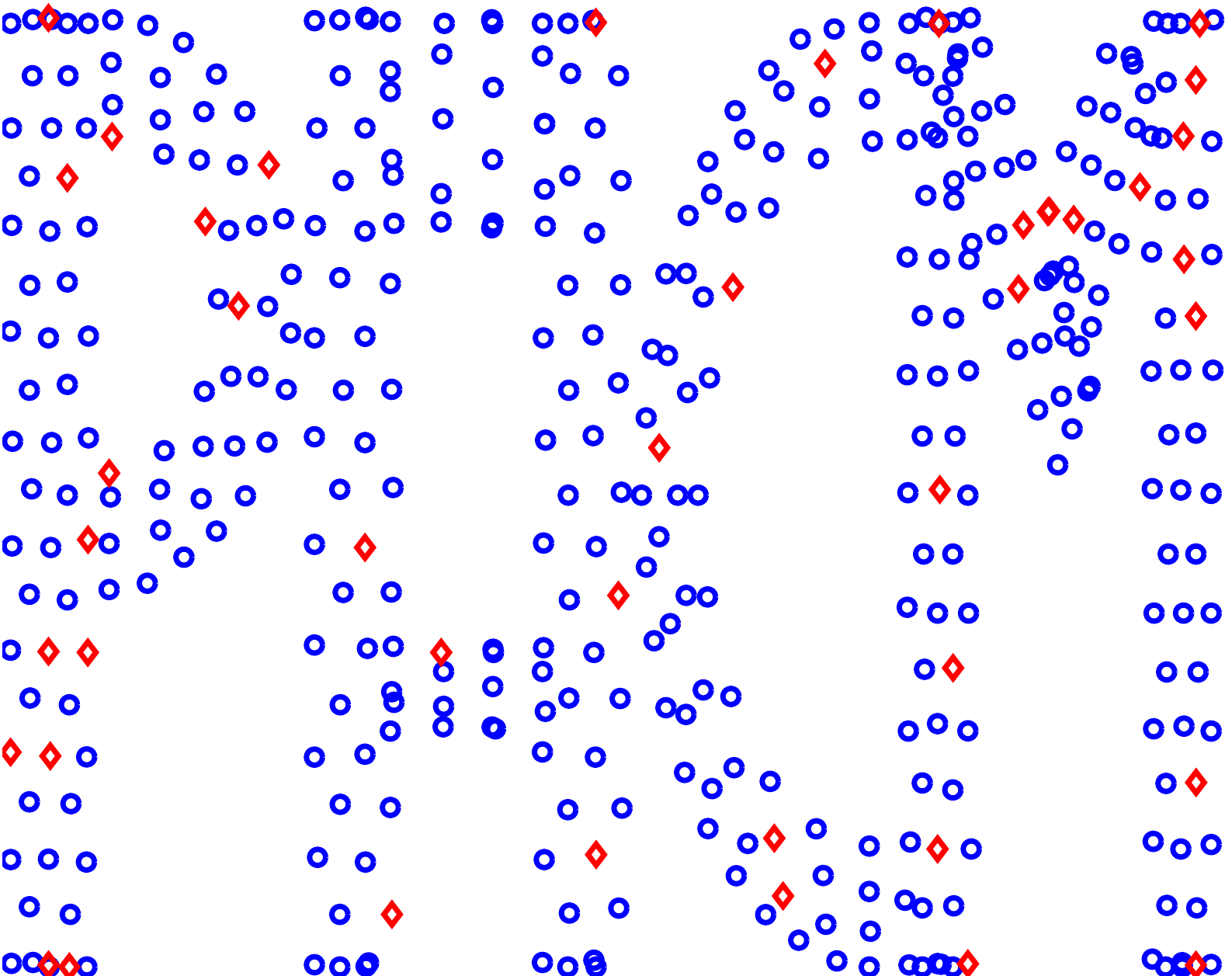}} \vspace{2mm}    &   \raisebox{-\totalheight}{\includegraphics[width= \linewidth]{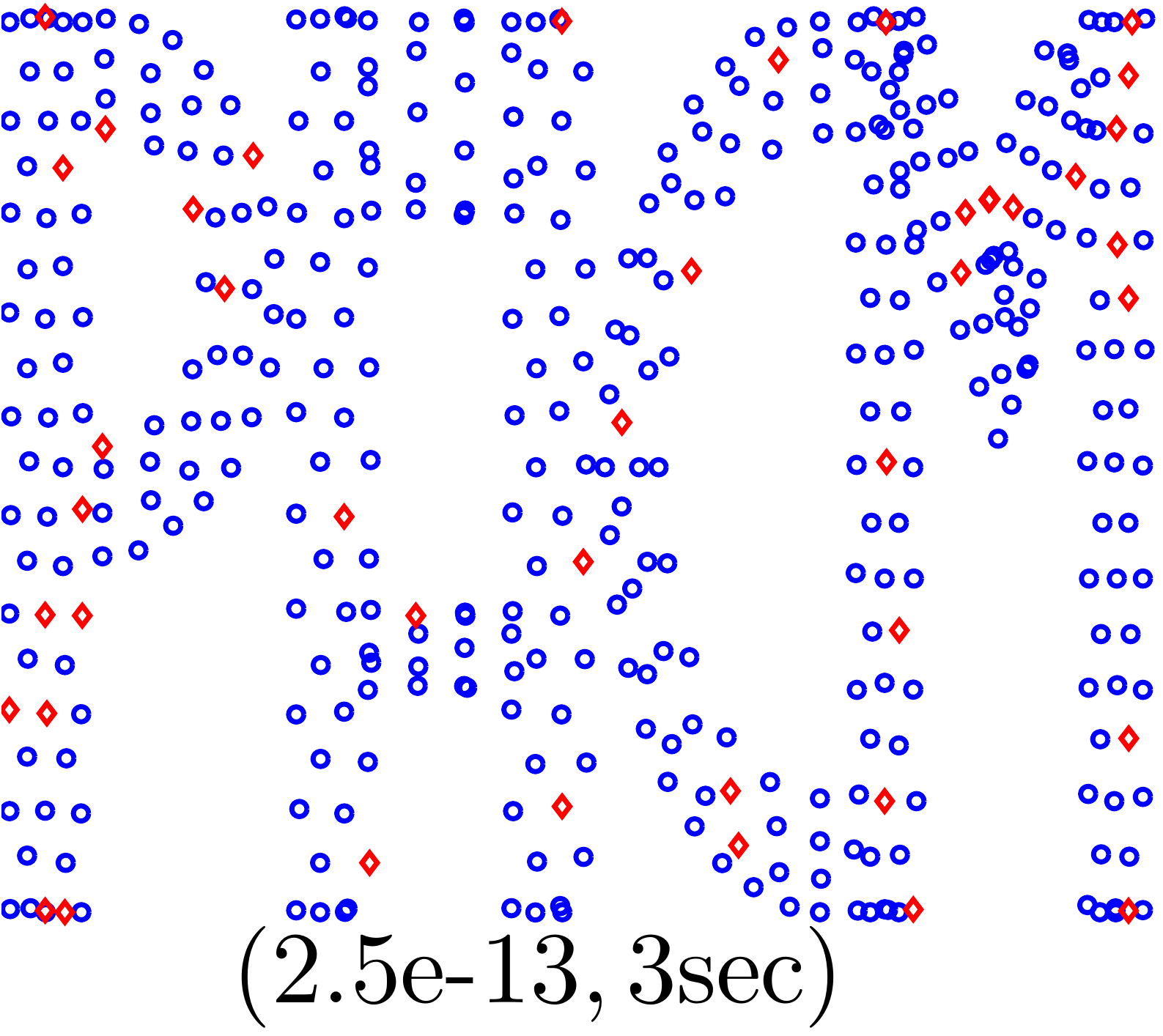}}              &                                     \raisebox{-\totalheight}{\includegraphics[width= \linewidth]{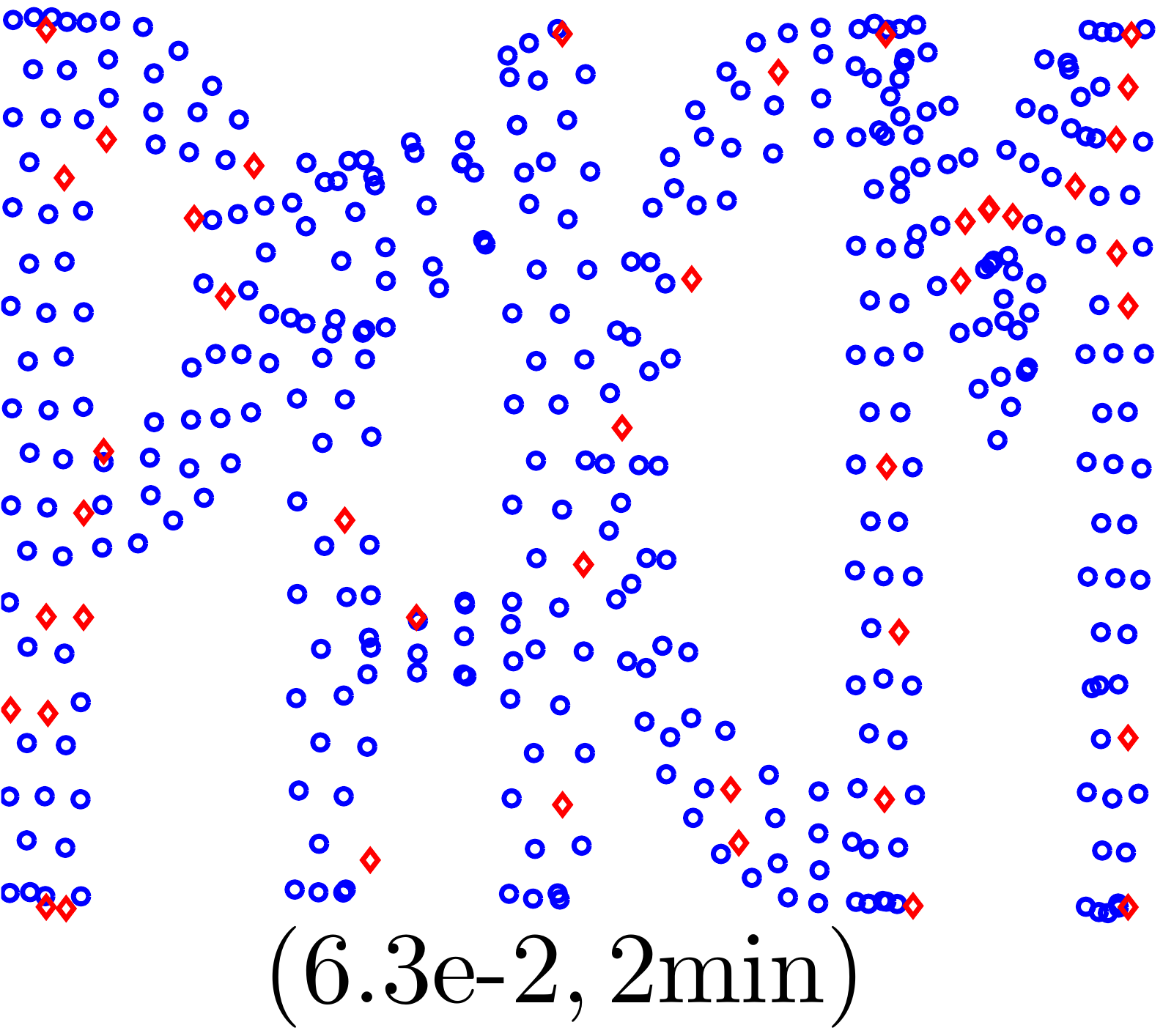}}   &    \raisebox{-\totalheight}{\includegraphics[width= \linewidth]{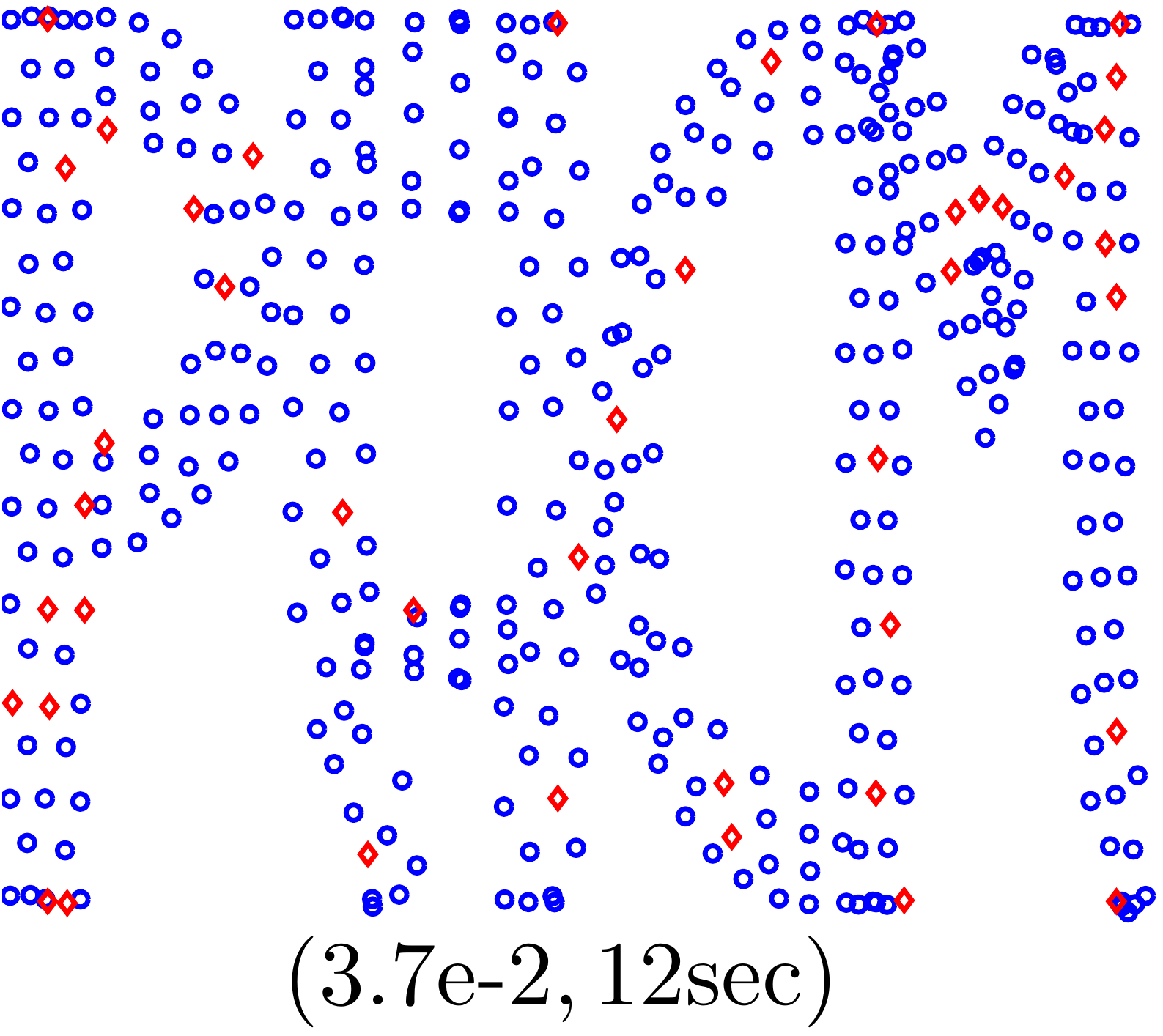}}                                &          \raisebox{-\totalheight}{\includegraphics[width= \linewidth]{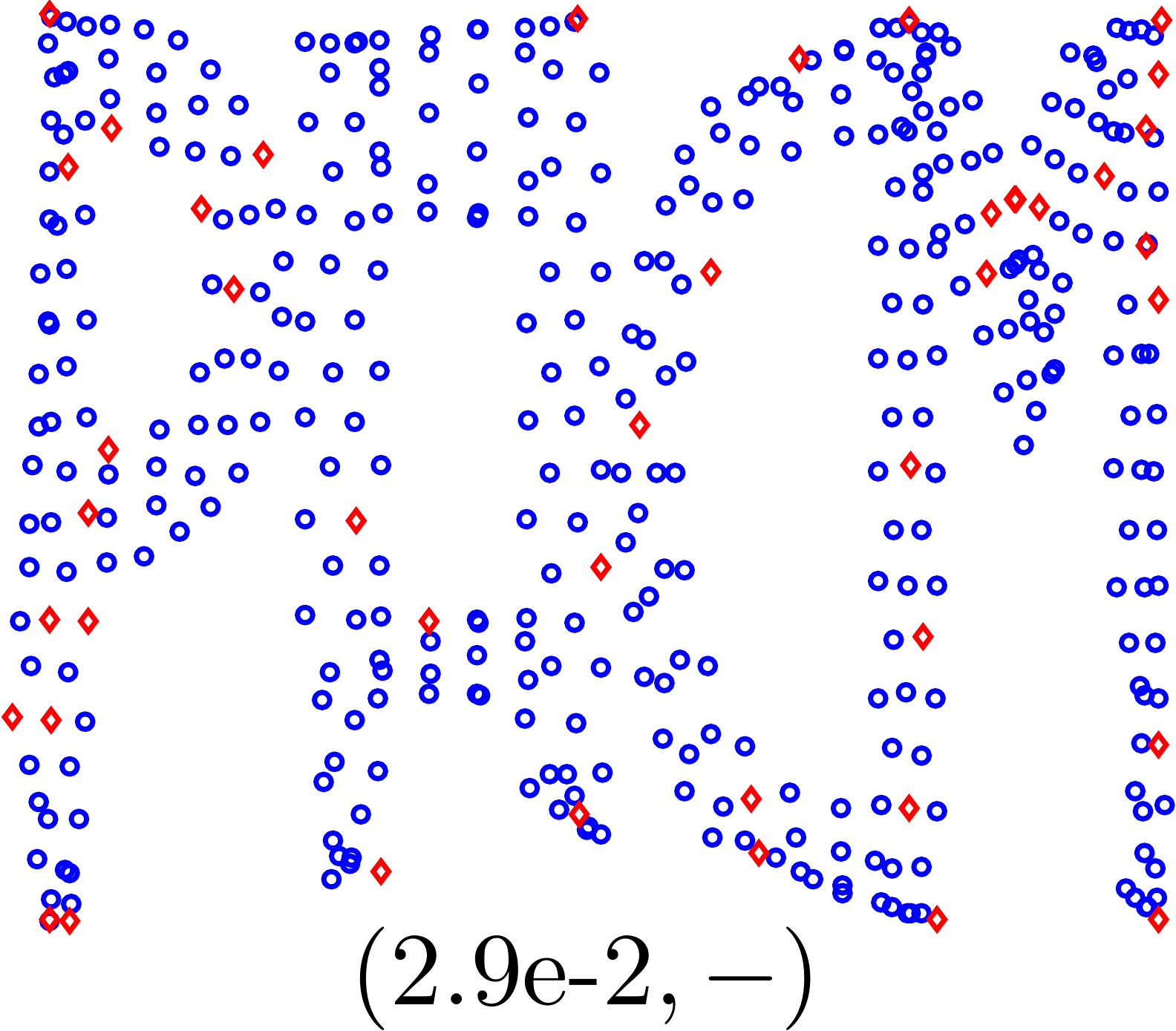}}                         \\ \cline{1-6} 
 $0.5$ &    \raisebox{-\totalheight}{\includegraphics[width= \linewidth]{figures/PACM_Orig-eps-converted-to.pdf}} \vspace{2mm}    &   \raisebox{-\totalheight}{\includegraphics[width= \linewidth]{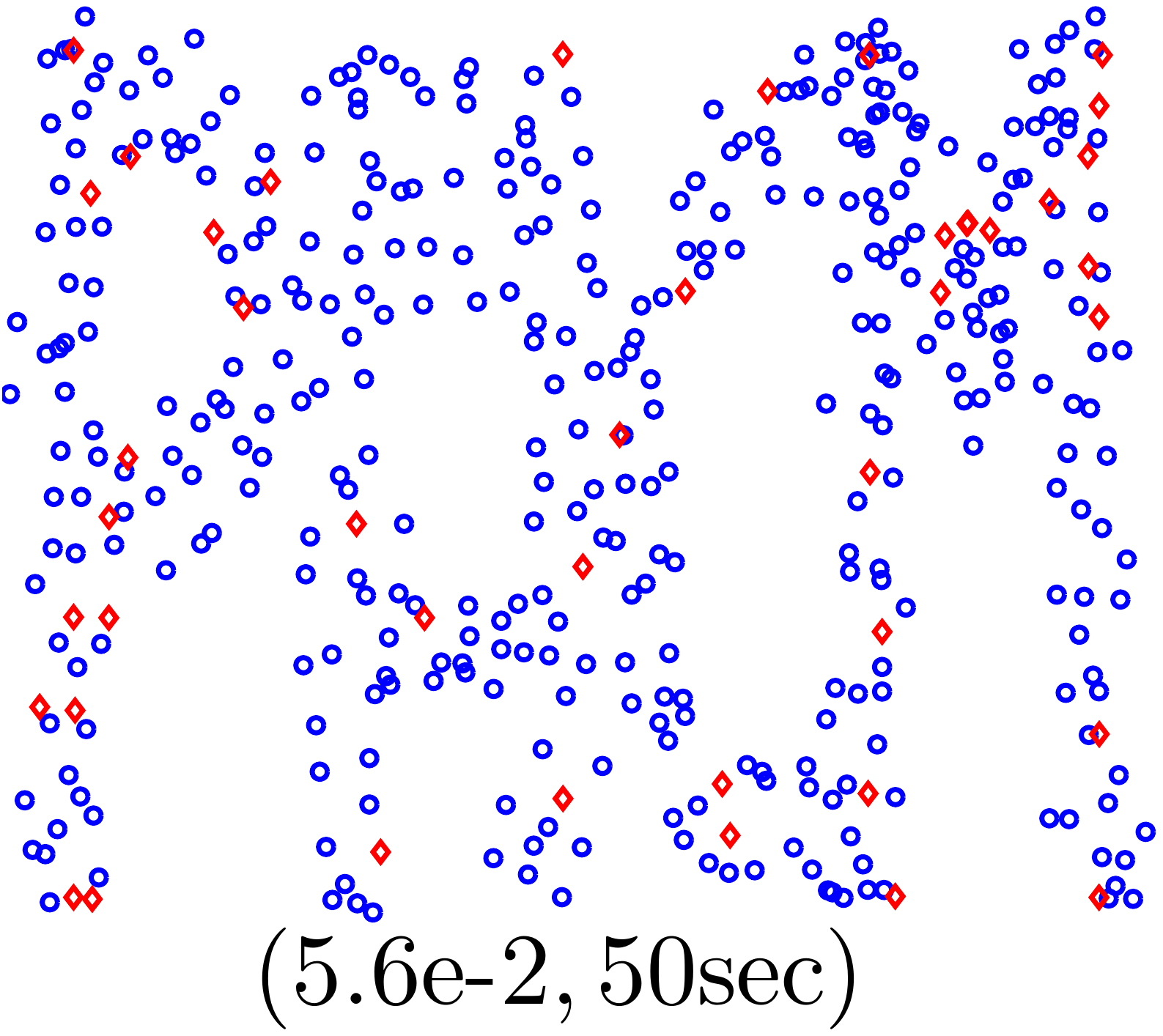}}  &                                     \raisebox{-\totalheight}{\includegraphics[width= \linewidth]{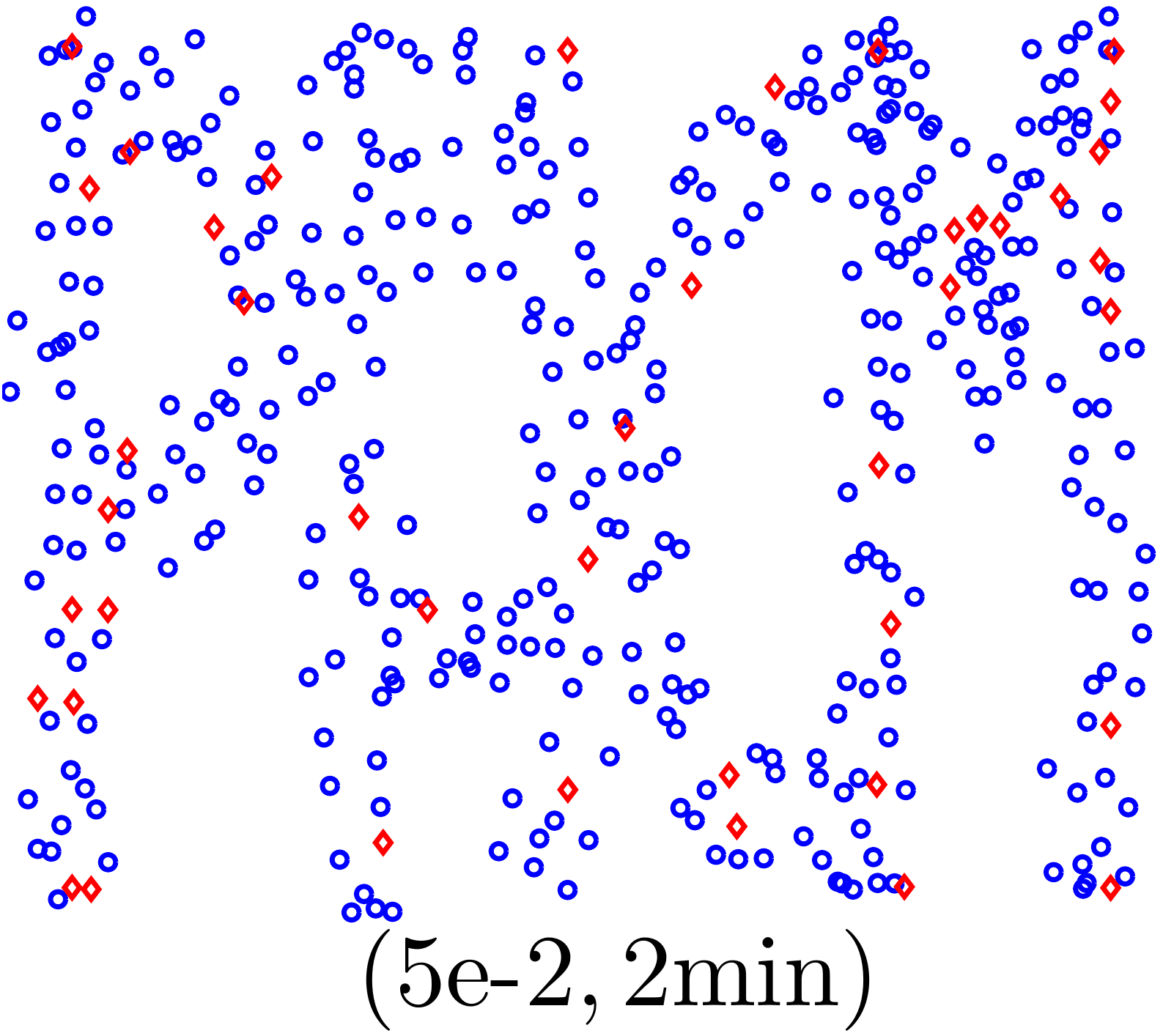}} &    \raisebox{-\totalheight}{\includegraphics[width= \linewidth]{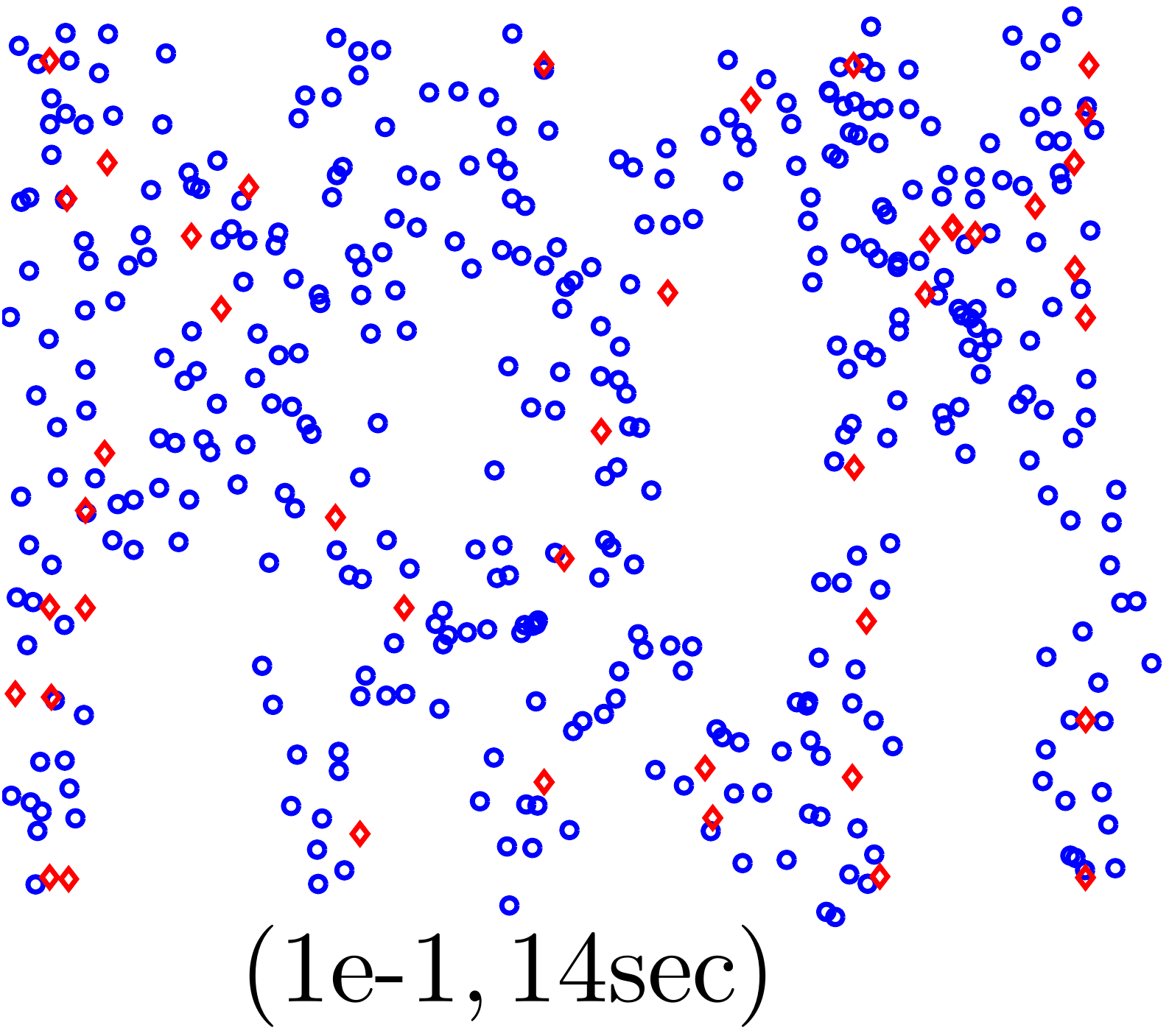}}  &          \raisebox{-\totalheight}{\includegraphics[width= \linewidth]{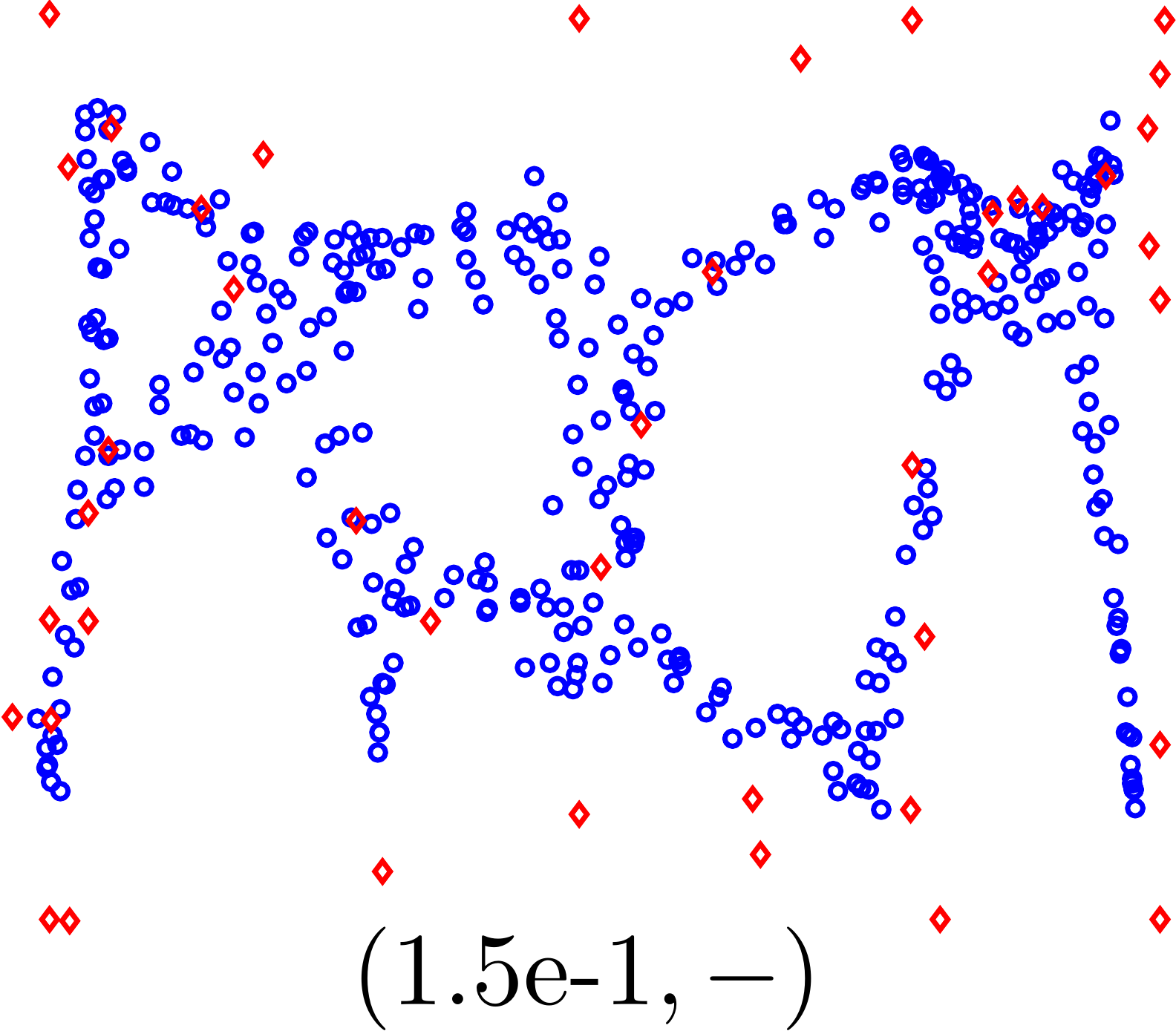}}  \\ \hline
\end{tabular}
\end{table*}

\noindent \textbf{\underline{Experiment 5}:} We provide some visual comparison in Figures \ref{my-labe1} and \ref{my-labe2} for RGGs and the PACM logo \cite{CLS2012}. The latter consists of $425$ points sampled from the logo. We randomly set $43$ points as anchors. Notice that the reconstruction from the proposed method is visibly superior to the competing methods in either case, which is also reflected by the ANE. The accuracy is competitive with \texttt{SNLSDP}, but consistently better than the other methods. In particular, notice the poor localizations obtained using \texttt{SNLDR} when $\eta=0.5$.

Additional comparisons with \cite{SXG2015}, \cite{BLTYW2006},  \cite{WZYB2008},  and \cite{ADPV2012} are provided in the supplementary material. The MATLAB code of our algorithm is publicly available \cite{SJC2017}.

\section{Conclusion}
\label{conclusion}

We demonstrated that by transforming the localization problem into a registration problem, one can achieve scalability without compromising the localization accuracy. In particular, the convex relaxation of the registration problem appears to be better behaved in terms of scalability and approximation quality compared to the convex relaxations of the localization problem. For example, the proposed algorithm can localize a  network of $8000$ nodes in $15$ minutes with almost machine-precision accuracy of  $1\mbox{e-}12$. In contrast, the convex relaxation in \cite{BLTYW2006} cannot be scaled beyond $1000$ nodes. An exception in this regard is \texttt{ESDP}, which can be scaled to networks with thousands of nodes. However, its localization accuracy starts falling off with the increase in network size. A key contribution of the paper is that we formulated and analysed the rigidity problem associated with multi-patch registration. An open question that emerged from this analysis is whether quasi-connectivity is sufficient for the patch configuration to be rigid. Another relevant question that remains unaddressed is the impact of rigidity on the performance of the registration algorithm, both in terms of tightness and stability. These  will be investigated in future work.

\section{Supplementary}

\subsection{Proof of Theorem II.8}
\label{ProofTheorem}
In this section, we prove Theorem II.8. First, we recall a basic assumption that was made in this regard.
\begin{assumption}[Non-degeneracy]
\label{assumption}
There are at least $d+1$ non-degenerate points in each patch.
\end{assumption}
We now restate Theorem II.8.
\begin{theorem}[Necessary condition]
\label{THEOREM}
Under Assumption \ref{assumption}, if a configuration is rigid in $\mathbb{R}^d$, then its correspondence graph must be quasi $(d\!+\!1)$-connected. 
\end{theorem}
To prove Theorem \ref{THEOREM}, we will need the following proposition:
\begin{proposition}[]
\label{Menger}
The following are equivalent.\\
(a) The correspondence graph $\Gamma$ is quasi $k$-connected.\\
(b) $\cE(\Gamma)$ can be divided into two disjoint subsets $E_1$ and $E_2$ such that the edges from $E_1$ and that from $E_2$ are\\
\indent (i)  incident on at least $k$ common vertices from $\cV_1(\Gamma)$, and\\
\indent (ii) not incident on any common vertex from $\cV_2(\Gamma)$.
\end{proposition}
For completeness, we recall Problems II.1 and II.2 from the main manuscript. 
\begin{problem}[Registration]
\label{prob1}
Find $\x_1, \dots , \x_N$ and rigid transforms $\cQ_1, \dots , \cQ_M$ such that, for $1 \leq i \leq M$,
\begin{equation}
\label{prob6}
\x_k = \cQ_i (\x_{k,i}) \quad \text{and}  \quad \bar{\a}_l = \cQ_i (\bar{\a}_l),
\end{equation}
where $k \in \cC_i \setminus \cA$ and $l \in \cC_i \cap \cA$.
\end{problem}
\begin{problem}[Uniqueness]
\label{prob2} 
Determine whether Problem \ref{prob1} have a unique solution up to a rigid transform. That is, if $\x_1, \dots , \x_N$  is a solution of Problem \ref{prob1}, then is it necessary that for some rigid transform $\cR$,
\begin{equation*}
\label{des_reg_out1}
\x_k = \cR ( \bar{\x}_k) \quad \text{ and } \quad \bar{\a}_l = \cR (\bar{\a}_l),
\end{equation*}
where $k \in \cS$ and $l \in \cA$?
\end{problem}
Moreover, we assume that the points in each patch have been derived from the respective points in $\cX_s \cup \cX_a$ via a rigid transform. 
Let
\begin{equation}
 \label{LocGlob1}
\bar{\x}_k =\cR_i(\x_{k,i}) = \bO_i \x_{k,i} + \t_i  \qquad (k \in \cC_i \backslash \cA),
\end{equation}
and
\begin{equation}
 \label{LocGlob2}
\bar{\a}_l =\cR_i(\bar{\a}_l) \qquad (l \in \cC_i \cap \cA),
\end{equation}
We consider a different registration problem where the patch coordinates are replaced by the original coordinates.
\begin{problem}[Registration]
\label{prob4} Find $\x_1, \dots , \x_N$ and rigid transforms $\cT_1, \dots , \cT_M$ such that, for $1 \leq i \leq M$,
\begin{equation}
\label{prob7}
	\x_k = \cT_i (\bar{\x}_{k}) \quad \text{and} \quad \bar{\a}_l = \cT_i (\bar{\a}_l),
\end{equation}
where $k \in \cC_i \setminus \cA$ and $l \in \cC_i \cap \cA$.
\end{problem}
A trivial solution is $\x_k = \bar{\x}_{k}$ and $\cT_i= (\bI_d, \mathbf{0})$. As with Problem \ref{prob2}, we can ask whether this is the only solution. It turns out that the  questions are related.
\begin{proposition}[Equivalence]
\label{prob5}
Problem \ref{prob1} has an unique solution if and only if Problem \ref{prob4} has an unique solution.
\end{proposition}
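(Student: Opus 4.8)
The plan is to exhibit an explicit correspondence between solutions of the two registration problems that leaves the point configuration $(\x_1,\dots,\x_N)$ unchanged and only relabels the latent transforms. Recall from \eqref{LocGlob1} that $\x_{k,i}=\cR_i^{-1}(\bar{\x}_k)$ for $k\in\cC_i\setminus\cA$, and from \eqref{LocGlob2} that $\cR_i(\bar{\a}_l)=\bar{\a}_l$, hence $\cR_i^{-1}(\bar{\a}_l)=\bar{\a}_l$, for $l\in\cC_i\cap\cA$. The key observation is that, since each $\cR_i$ is a fixed rigid transform and rigid transforms form a group under composition, the map $\cQ_i\mapsto\cQ_i\circ\cR_i^{-1}$ is a bijection on this group, with inverse $\cT_i\mapsto\cT_i\circ\cR_i$.

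First I would prove the forward implication. Suppose $(\x_1,\dots,\x_N)$ together with rigid transforms $\cQ_1,\dots,\cQ_M$ solves Problem \ref{prob1}. Put $\cT_i:=\cQ_i\circ\cR_i^{-1}$. Then for $k\in\cC_i\setminus\cA$, $\cT_i(\bar{\x}_k)=\cQ_i(\cR_i^{-1}(\bar{\x}_k))=\cQ_i(\x_{k,i})=\x_k$; and for $l\in\cC_i\cap\cA$, $\cT_i(\bar{\a}_l)=\cQ_i(\cR_i^{-1}(\bar{\a}_l))=\cQ_i(\bar{\a}_l)=\bar{\a}_l$. Hence the very same configuration $(\x_1,\dots,\x_N)$, now with transforms $\cT_i$, solves Problem \ref{prob4}. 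The reverse implication is symmetric: given a solution of Problem \ref{prob4} with transforms $\cT_i$, the choice $\cQ_i:=\cT_i\circ\cR_i$ gives $\cQ_i(\x_{k,i})=\cT_i(\cR_i(\x_{k,i}))=\cT_i(\bar{\x}_k)=\x_k$ and $\cQ_i(\bar{\a}_l)=\cT_i(\cR_i(\bar{\a}_l))=\cT_i(\bar{\a}_l)=\bar{\a}_l$, so the configuration solves Problem \ref{prob1}.

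Consequently the set of admissible point configurations $(\x_1,\dots,\x_N)$ is literally the same for both problems; only the latent transforms differ, by composition with the fixed $\cR_i$'s. Since ``uniqueness'' in both statements means uniqueness up to a single global rigid transform (in the sense of Problem \ref{prob2}), and this is a property of the set of point configurations alone, Problem \ref{prob1} has a unique solution if and only if Problem \ref{prob4} does. The only step requiring care is the anchor bookkeeping: one must invoke \eqref{LocGlob2} to guarantee that $\cR_i^{-1}$ fixes every anchor of patch $i$, so that the anchor-fixing constraints of the two problems match up under the correspondence. I expect this to be the only place where the specific structure of the configuration enters rather than pure group theory, and it is routine; there is no substantial obstacle.
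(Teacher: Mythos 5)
Your proof is correct and follows essentially the same route as the paper's: both rest on the correspondence $\cQ_i=\cT_i\circ\cR_i$ (equivalently $\cT_i=\cQ_i\circ\cR_i^{-1}$) between the latent transforms, which leaves the point configuration unchanged and matches the anchor constraints via \eqref{LocGlob2}. You merely spell out the two directions and the bijection more explicitly than the paper does, so no further changes are needed.
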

\begin{proof}
Combining \eqref{LocGlob1}, \eqref{LocGlob2} and \eqref{prob7}, we can write
\begin{equation}
\label{prob8}
\x_k = (\cT_i \circ \cR_i) (\x_{k,i})  \quad \text{and} \quad \bar{\a}_l =  (\cT_i \circ \cR_i) ( \bar{\a}_l ).
\end{equation}
Comparing \eqref{prob8} with \eqref{prob6}, we have $\cQ_i=\cT_i \circ \cR_i$. It follows that the $\cT_i$'s are unique if and only if the $\cQ_i$'s are  unique. Moreover, the uniqueness of the $\x_k$'s follows from the uniqueness of the transforms and relations \eqref{prob6} and \eqref{prob7}.
\end{proof}
We also make the observation concerning Problem \ref{prob4} that $\cT_1, \dots , \cT_M$ satisfying \eqref{prob7} are unique, i.e., $\cT_i=\cR(\bI_d, \mathbf{0})$ for some rigid transform $\cR$, if and only if the corresponding $\x_1,\ldots,\x_N$ are related to $\bar{\x}_1,\ldots,\bar{\x}_N$ via a rigid transform. If $\cT_i=\cR(\bI_d, \mathbf{0})$, then it follows from \eqref{prob7} that $\x_k =\cR (\bar{\x}_k)$. Conversely, if  $\x_k =\cR (\bar{\x}_k)$ for some rigid transform $\cR$, then the corresponding $\cT_i$ should necessarily be of the form $\cT_i=\cR(\bI_d, \mathbf{0})$. Indeed, if some $\cT_i \neq \cR(\bI_d, \mathbf{0})$, then we can construct a solution that is not related to $\bar{\x}_1,\ldots,\bar{\x}_N$ via a rigid transform, and this would lead to a contradiction.  

To complete the proof of Theorem \ref{THEOREM}, it remains to show that, if the solution of  Problem \ref{prob4} is unique, then $\Gamma$ must be quasi $(d+1)$-connected. We will prove this by contradiction. As a first step, we note that  $\Gamma$ is at least quasi-$1$ connected.
\begin{proposition}
\label{conn}
If Problem \ref{prob4} has a unique solution, then $\Gamma$ must be quasi-$k$ connected for some $k \geq 1$.
\end{proposition}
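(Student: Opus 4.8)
The plan is to argue by contradiction, leaning on the observation recorded just above the proposition: the solution of Problem~\ref{prob4} is unique (up to a rigid transform) precisely when every admissible family $\cT_1,\dots,\cT_M$ consists of a single common rigid transform. So suppose, for contradiction, that $\Gamma$ fails to be quasi-$k$ connected for \emph{every} $k\ge 1$. By Definition~\ref{defConnected} this is equivalent to saying that some two vertices of $\cV_2(\Gamma)$ admit no path between them in $\Gamma$; that is, the patch vertices of $\Gamma$ do not all lie in a single connected component. (Note also that uniqueness forces every sensor to appear in at least one patch, since an uncovered sensor would have a completely unconstrained coordinate, contradicting uniqueness even modulo a rigid transform; hence the patches cover $\cS$.)

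Next I would pick a connected component of $\Gamma$ that does not contain the anchor patch $\cC_{M+1}$; this exists because there are at least two components and $\cC_{M+1}$ belongs to exactly one. Let $P\subseteq\{1,\dots,M\}$ index the patches of this component and set $U=\bigcup_{i\in P}\cC_i$. The key point is that $U$ contains no anchor: if $\cC_i$ contained an anchor $l$, then since $\cC_{M+1}=\cA$, the vertex $l$ would join $\cC_i$ to $\cC_{M+1}$, putting $\cC_i$ in $\cC_{M+1}$'s component. Thus $U\subseteq\cS$, and $U$ is disjoint from $\bigcup_{i\notin P}\cC_i$ because a shared node would merge the components. Since every patch has at least two points (Assumption~\ref{assumption}) and $P\neq\emptyset$, also $U\neq\emptyset$.

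Now fix any nontrivial rigid transform $\cR$ (say a nonzero translation, which moves every point) and define
\begin{equation*}
\x_k=\begin{cases}\cR(\bar{\x}_k), & k\in U,\\ \bar{\x}_k, & k\in\cS\setminus U,\end{cases}\qquad
\cT_i=\begin{cases}\cR, & i\in P,\\ (\bI_d,\mathbf{0}), & i\in\{1,\dots,M\}\setminus P.\end{cases}
\end{equation*}
I would then verify this is a bona fide solution of Problem~\ref{prob4}: for $i\in P$ every $k\in\cC_i$ is a sensor in $U$, so $\x_k=\cR(\bar{\x}_k)=\cT_i(\bar{\x}_k)$; for $i\notin P$ every $k\in\cC_i\setminus\cA$ lies in $\cS\setminus U$, so $\x_k=\bar{\x}_k=\cT_i(\bar{\x}_k)$, while every $l\in\cC_i\cap\cA$ satisfies $\bar{\a}_l=(\bI_d,\mathbf{0})(\bar{\a}_l)=\cT_i(\bar{\a}_l)$; the definition of $(\x_k)$ is unambiguous since $U$ and $\cS\setminus U$ partition $\cS$. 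Since $P$ is nonempty and (outside a degenerate sub-case discussed below) does not exhaust $\{1,\dots,M\}$, the family $(\cT_i)$ takes the two distinct values $\cR$ and $(\bI_d,\mathbf{0})$, hence is not a single common rigid transform; by the observation above, this solution is then \emph{not} equivalent to the ground truth up to a rigid transform, contradicting uniqueness.

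The main obstacle I anticipate is the bookkeeping around the anchor patch: one must be certain the chosen component is anchor-free (this is exactly what the identity $\cC_{M+1}=\cA$ delivers) and that the constructed family $(\cT_i)$ is genuinely distinct from a global rigid transform rather than accidentally coinciding with one. The only borderline situation is when the sole patch outside the chosen component is $\cC_{M+1}$ itself, i.e.\ no sensor patch touches any anchor; there one finishes either by splitting the remaining sensor patches (which are still disconnected from $\cC_{M+1}$, so if there are two or more components among them one transforms them independently) or by invoking the non-degeneracy assumption to exhibit the internal flexibility of the single remaining component. Everything else is routine verification of \eqref{prob7}.
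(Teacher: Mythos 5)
Your proof is correct and follows essentially the same route as the paper's: assuming the patch vertices of $\Gamma$ are disconnected, you apply a nontrivial rigid transform to the patches of one component and the identity to the rest, producing a second solution of Problem \ref{prob4} and thereby contradicting uniqueness. The only real difference is cosmetic — the paper uses the reflection $(-\bI_d,\mathbf{0})$ on one side whereas you use a translation on an anchor-free component, which if anything treats the anchor constraints and the ``up to a rigid transform'' issue more carefully than the paper's own brief argument.
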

\begin{proof}
Indeed, suppose that there exist non-empty subsets $S$ and $T$ of $\cV_2(\Gamma)$ such that there is no path between any $i \in S$ and $j \in T$. Define
\begin{equation*}
A = \bigcup_{\alpha \in S} \cC_{\alpha} \quad \text{ and } \quad B= \bigcup_{\beta \in T} \cC_{\beta}.
\end{equation*}
Clearly, $A \cap B$ must be empty. Else, we can find a path between some $i \in S$ and $j \in T$, which would violate our assumption. However, on setting $\cT_i=(\bI_d,\mathbf{0})$ for $i \in S$, and $\cT_j=(-\bI_d, \mathbf{0})$ for $j \in T$, we obtain a solution to Problem \ref{prob4} which is different from the trivial solution. Hence, our assumption about the existence of $S$ and $T$ must be wrong.
\end{proof}
In fact, we can make the stronger claim that $\Gamma$ is quasi $k$-connected, where $k \geq d+1$. To establish the claim, we show that the rigidity assumption is violated if $k \leq d$. 

First, we introduce few notations about paths. Suppose that there are one or more paths between two vertices of $\cV_2(\Gamma)$. We denote the $j$-th vertex on the $i$-th path using  $\sigma^j_i$. In particular, $\sigma_i^1$ and $\sigma_i^{p_i}$ are the initial and final vertices, where $p_i$ is the number of vertices on the path. Since $\Gamma$ is bipartite, $p_i$ must be odd, and 
\[ \sigma_i^j \in \begin{cases} 
      \cV_1 \left(\Gamma\right) & \text{for} \ j = 2,4,\dots,p_i-1, \\
      \cV_2 \left(\Gamma\right) & \text{for} \ j = 1,3,\dots,p_i. \\
   \end{cases}
\]
For $1 \leq j \leq (p_i-1)/2$, consider the vertices  
\begin{equation*}
\sigma_i^{2j},\  \sigma_i^{2j-1}, \text{ and } \sigma_i^{2j+1}.
\end{equation*}
The first vertex represents a node, while the latter two represent patches. Moreover, the node belongs to both the patches. Therefore, for $1 \leq i \leq k$ and $1 \leq j \leq (p_i-1)/2$,
\begin{equation}
\label{Theo_prof_1}
\bO_{\sigma_i^{2j-1}}\bar{\x}_{\sigma_i^{2j}}+\t_{\sigma_i^{2j-1}} = \bO_{\sigma_i^{2j+1}}\bar{\x}_{\sigma_i^{2j}}+\t_{\sigma_i^{2j+1}}.
\end{equation}
To arrive at a contradiction, we show that if $k \leq d$, then there exists at least some $\cT_i=(\bO_i,\t_i),1\leq i \leq M,$ different\footnote{Without loss of generality, we omit the global rigid transform $\cR$.} from $(\bI_d,\bm{0})$ for which the system of equations in \eqref{prob7} hold. To do so, we divide $\cV_2(\Gamma)$ into two disjoint sets. Note that, from Proposition \ref{Menger}, we can identify disjoint subsets $E_1,E_2 \subset \cE(\Gamma)$ such that the edges from $E_1$ and that from $E_2$ are not incident on any common vertex of $\cV_2(\Gamma)$. In particular, define $S \subset \cV_2(\Gamma)$ to  be the vertices on which  the edges of $E_1$ are incident. Similarly, let $T \subset \cV_2(\Gamma)$ be the vertices on which  the edges of $E_2$ are incident. Then, $S$ and $T$ are non-empty and disjoint. Without loss of generality, we assume that the vertex corresponding to the anchor patch belongs to $S$. Since $\Gamma$ is quasi $k$-connected, we can find a distinct vertex $t \in T$ which is connected with the anchor patch vertex by paths $\sigma_1,\ldots,\sigma_k$ that are $\cV_1(\Gamma)$-disjoint. 

Note that, since the anchor patch is fixed, $\bO_{M+1}=\bI_d$ and $\t_{M+1}=\bm{0}$. Therefore, we set
\[ \cT_i= (\bO_i,\t_i) = \begin{cases} 
   (\bO,\t) , & \text{ if } \ i \in T, \\
   (\bI_d,\mathbf{0}) & \text{ if } \ i \in S,
	\end{cases}
\]
and show that if $k \leq d$, then we can find $(\bO,\t) \neq (\bI_d,\bm{0})$ such that \eqref{prob7} holds. Note that Proposition \ref{Menger} also tells us that the edges from $E_1$ and that from $E_2$ are incident on exactly $k$ common vertices from $\cV_1(\Gamma)$; we denoted these vertices using $\Omega$. It is also be reasoned that each path contains exactly one vertex from $\Omega$. Assume that $\sigma_i^{2q_i} \in \Omega$ be the vertex on the path $\sigma_i$. Therefore,
\begin{equation}
\label{Theo_prof_2}
\bar{\x}_{\sigma_i^{2q_i}} = \bO \bar{\x}_{\sigma_i^{2q_i}}+\t.
\end{equation}
If $k=1$, then $\bO=-\bI_d$ and $\t= 2\bar{\x}_{\sigma_i^{2q_i}}$ satisfy \eqref{Theo_prof_2}, and hence the equations in \eqref{prob7}. On the other hand, if $2\leq k \leq d$, then we have $k$ equations similar to \eqref{Theo_prof_2}, one for each path. We eliminate $\t$ by subtracting the equations corresponding to  $2 \leq i \leq k$ from the equation corresponding to $i=1$. This gives us
\begin{equation*}
	\label{Theo_prof_3}
	\bO(\bar{\x}_{\sigma_i^{2q_i}} - \bar{\x}_{\sigma_1^{2q_1}}) = \bar{\x}_{\sigma_i^{2q_i}}-\bar{\x}_{\sigma_1^{2q_1}} \qquad (2 \leq i \leq k).
\end{equation*}
We collect this into the fixed-point equation $\bO \bX = \bX$, where  
\begin{equation*}
	\bX = \left[  \bar{\x}_{\sigma_2^{2q_2}} - \bar{\x}_{\sigma_1^{2q_1}} \ \cdots \ \bar{\x}_{\sigma_k^{2q_k}} - \bar{\x}_{\sigma_1^{2q_1}}\right] \in \mathbb{R}^{d \times (k-1)}.
\end{equation*}
Now, if we assume that $k \leq d$, then we can find $\bO \neq \bI_d$ such that $\bO \bX = \bX$. In particular, we can find $\bO$ that acts as an identity transform on the space spanned by the columns of $\bX$, and as a non-trivial rotation on the orthogonal complement of this space. We set $\t$ using \eqref{Theo_prof_2} for this choice of $\bO$. One can  verify that the above choice of $(\bO,\t) \neq (\bI_d,\bm{0})$ satisfies the equations in \eqref{prob7}. This concludes the proof of Theorem \ref{THEOREM}.

\subsection{Experiments}

In this section, we report some additional numerical results to demonstrate the performance of the proposed algorithm.

\noindent \textbf{\underline{Experiment 6}:} To study the effect of the number of anchors on the performance, we consider a random geometric graph (RGG) on $[-0.5,0.5]^2$ consisting of $500$ sensors. The sensing radius $r$ is set as $0.17$. We plot the ANE as a function of the number of anchors $K$ for different noise levels $\eta = 0.01, 0.05$ and $0.1$. The ANE is averaged  over $100$ noise realizations. The results are reported in Figure \ref{KvsANE}. We notice that the ANE  falls off with increase in $K$, and saturates beyond a certain $K$.\newline

\begin{figure}[!htb]
	\centering
	\includegraphics[width= 0.7\linewidth]{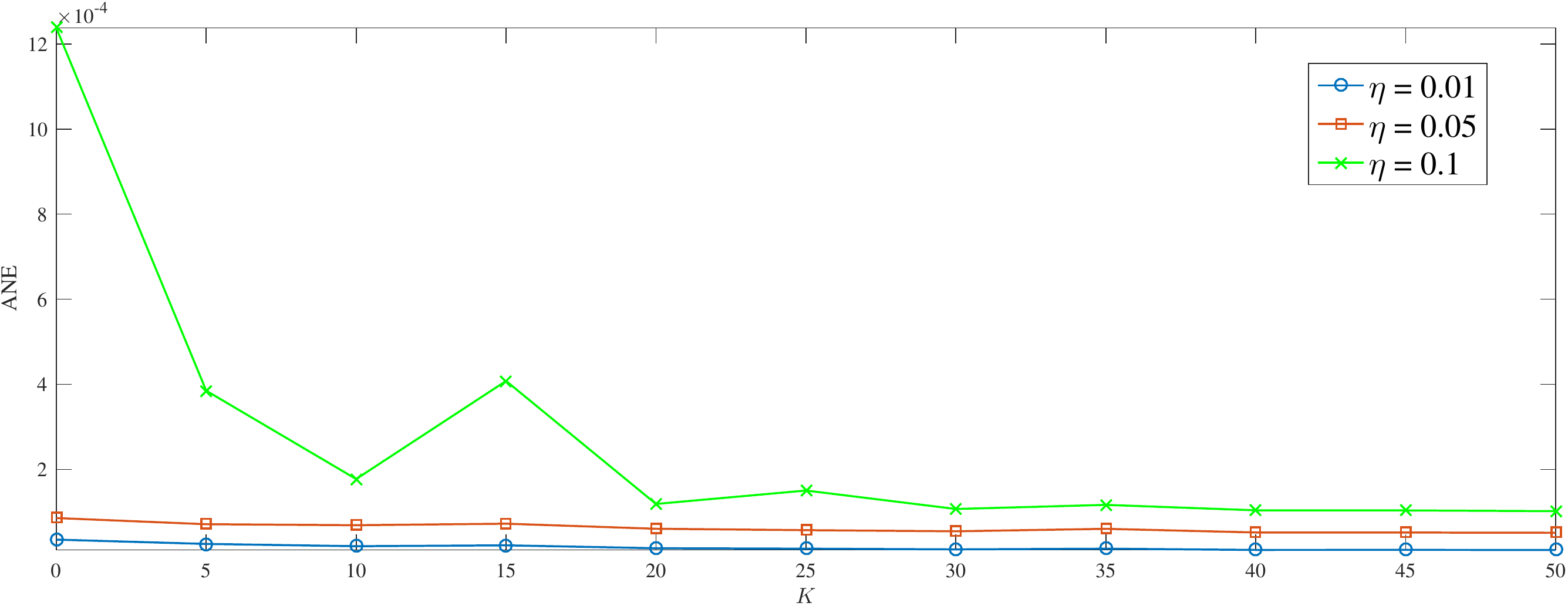}
	\caption{For a RGG with $N = 500$ and $r = 0.17$, the ANE is plotted as a function of $K$ at different noise levels $\eta = 0.01,0.05,0.1$.}
	\label{KvsANE}
\end{figure}

\noindent \textbf{\underline{Experiment 7}:} We compare the localization accuracy of the proposed method with \texttt{PLACEMENT} \cite{ADPV2012} on RGGs. The results are reported in Table \ref{Compare3}. We notice that for both clean and noisy measurements, the proposed method performs better than \texttt{PLACEMENT}.\newline

\begin{table}[!htb]
\caption{Comparison of the  proposed method with \texttt{PLACEMENT} \cite{ADPV2012} on RGGs.}
\label{Compare3}
\centering
{
\begin{tabular}{cccc|c|c|}
\cline{5-6}   &                                             &                                              &          & \multicolumn{2}{c|}{Accuracy (ANE)}                                                 \\ \hline
\multicolumn{1}{|c|}{$N$}                   & \multicolumn{1}{c|}{$K$}                  & \multicolumn{1}{c|}{$r$}                     & $\eta$ & Proposed & \texttt{PLACEMENT} \cite{ADPV2012} \\ \hline
\multicolumn{1}{|c|}{\multirow{2}{*}{$1000$}}   & \multicolumn{1}{c|}{\multirow{2}{*}{$20$}}   & \multicolumn{1}{c|}{\multirow{2}{*}{$0.12$}} & $0$      & $2.7\mbox{e-}12$ & $5.5\mbox{e-}6$  \\ \cline{4-6} 
\multicolumn{1}{|c|}{}                        & \multicolumn{1}{c|}{}                       & \multicolumn{1}{c|}{}                        & $0.01$    & $2.7\mbox{e-}3$  & $1.4$ \\ \hline
\multicolumn{1}{|c|}{\multirow{2}{*}{$2000$}}  & \multicolumn{1}{c|}{\multirow{2}{*}{$20$}}  & \multicolumn{1}{c|}{\multirow{2}{*}{$0.09$}} & $0$      & $1.5\mbox{e-}12$    & $3.7\mbox{e-}6$ \\ \cline{4-6} 
\multicolumn{1}{|c|}{}                        & \multicolumn{1}{c|}{}                       & \multicolumn{1}{c|}{}                        & $0.01$    & $8.7\mbox{e-}4$   & $1.4$ \\ \hline
\multicolumn{1}{|c|}{\multirow{2}{*}{$4000$}}   & \multicolumn{1}{c|}{\multirow{2}{*}{$20$}}   & \multicolumn{1}{c|}{\multirow{2}{*}{$0.06$}} & $0$      & $1.2\mbox{e-}10$  & $3.1\mbox{e-}2$   \\ \cline{4-6} 
\multicolumn{1}{|c|}{}                        & \multicolumn{1}{c|}{}                       & \multicolumn{1}{c|}{}                        & $0.01$    & $3.6\mbox{e-}3$   & $1.4$ \\ \hline
\multicolumn{1}{|c|}{\multirow{2}{*}{$10000$}}   & \multicolumn{1}{c|}{\multirow{2}{*}{$20$}}   & \multicolumn{1}{c|}{\multirow{2}{*}{$0.04$}} & $0$      & $6.7\mbox{e-}11$  & $1\mbox{e-}2$ \\ \cline{4-6} 
\multicolumn{1}{|c|}{}                        & \multicolumn{1}{c|}{}                       & \multicolumn{1}{c|}{}                        & $0.01$    & $1.4\mbox{e-}3$     & $1.4$ \\ \hline
\multicolumn{1}{|c|}{\multirow{2}{*}{$15000$}}  & \multicolumn{1}{c|}{\multirow{2}{*}{$20$}}  & \multicolumn{1}{c|}{\multirow{2}{*}{$0.03$}} & $0$       & $6.7\mbox{e-}10$    & $1.4$ \\ \cline{4-6} 
\multicolumn{1}{|c|}{}                        & \multicolumn{1}{c|}{}                       & \multicolumn{1}{c|}{}                        & $0.01$    & $1.9\mbox{e-}1$    & $1.4$ \\ \hline
\end{tabular}
}
\end{table}

\noindent \textbf{\underline{Experiment 8}:} We consider a RGG with $200$ sensors. For a fair comparison with \texttt{SNLDR} \cite{SXG2015}, we placed 4 anchors at $(\pm 0.5, \pm 0.5)$ (so that the sensors are guaranteed to be in the convex hull of the anchors). We set $r=0.28$ and $\eta = 0.1$. The reconstructions are compared in Figure \ref{ANE} along with the corresponding ANEs.\newline

\begin{figure}[!htb]
	\centering
	\subfloat[Proposed.]{\includegraphics[width= 0.24 \linewidth]{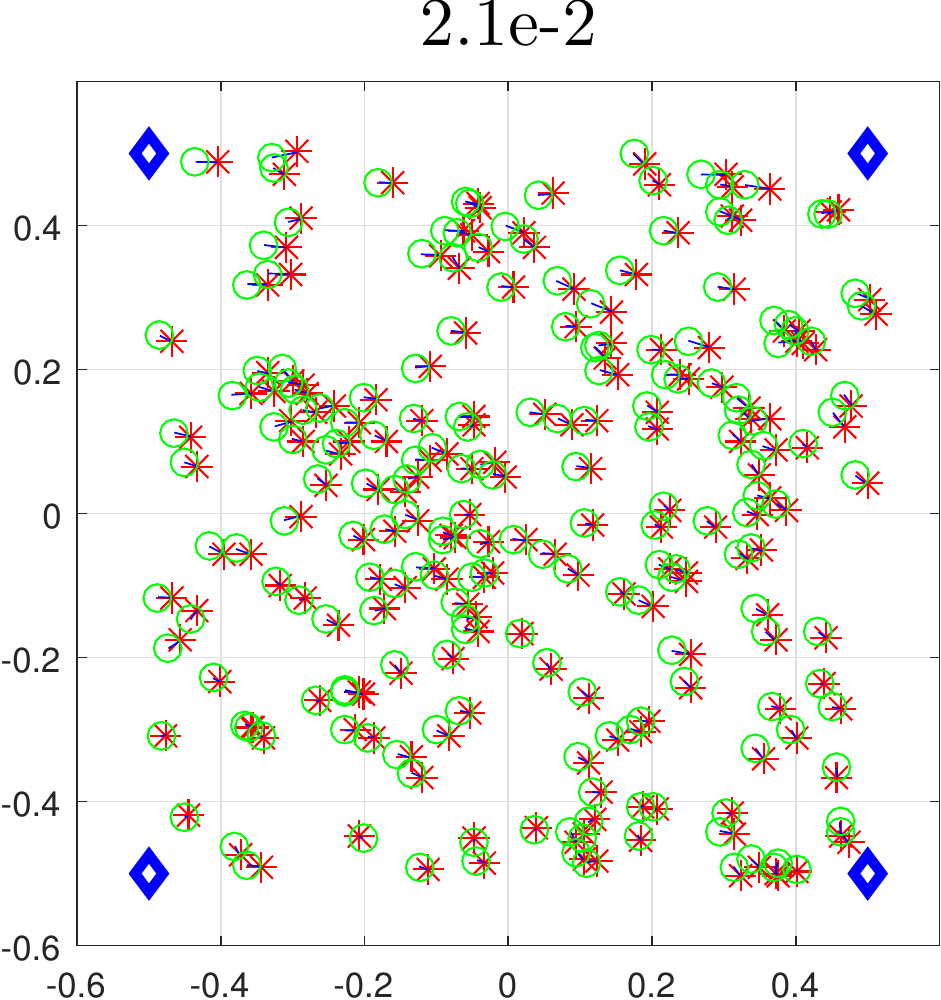}} \hspace{-0.5mm}
	\subfloat[SNLSDP.]{\includegraphics[width= 0.24 \linewidth]{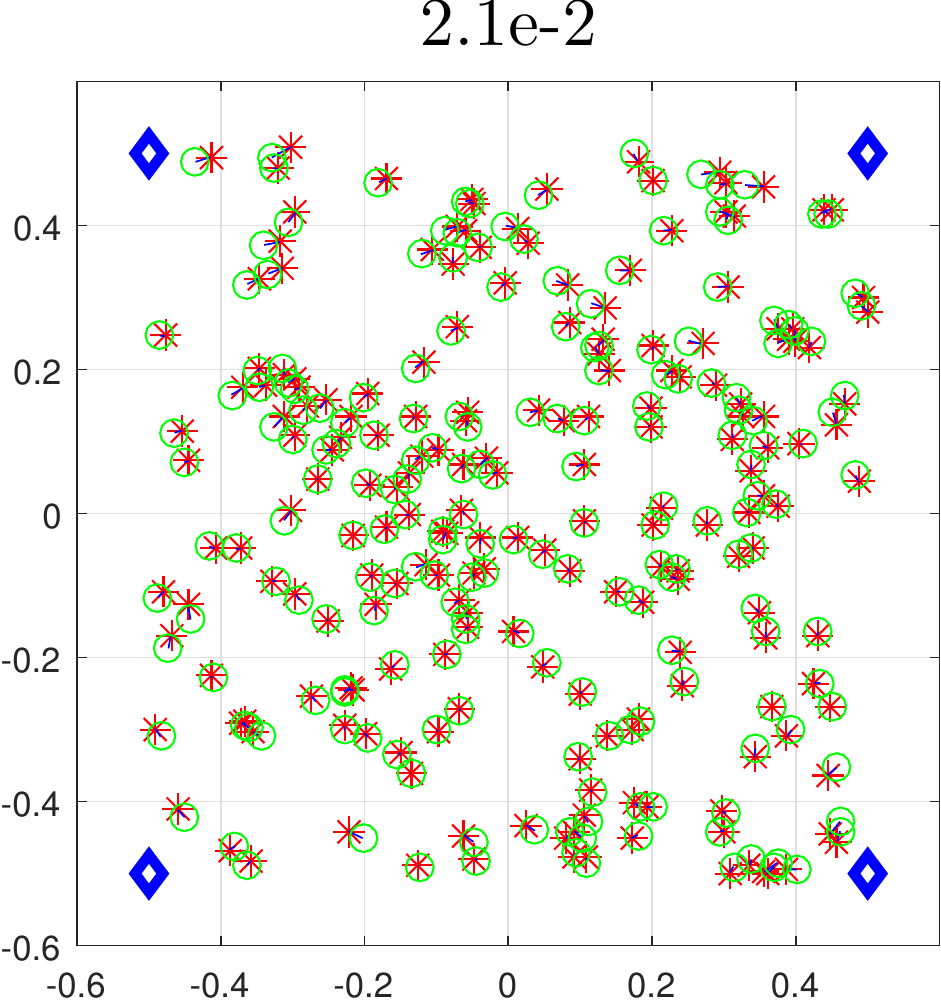}} \hspace{-0.5mm}
	\subfloat[ESDP.]{\includegraphics[width= 0.24 \linewidth]{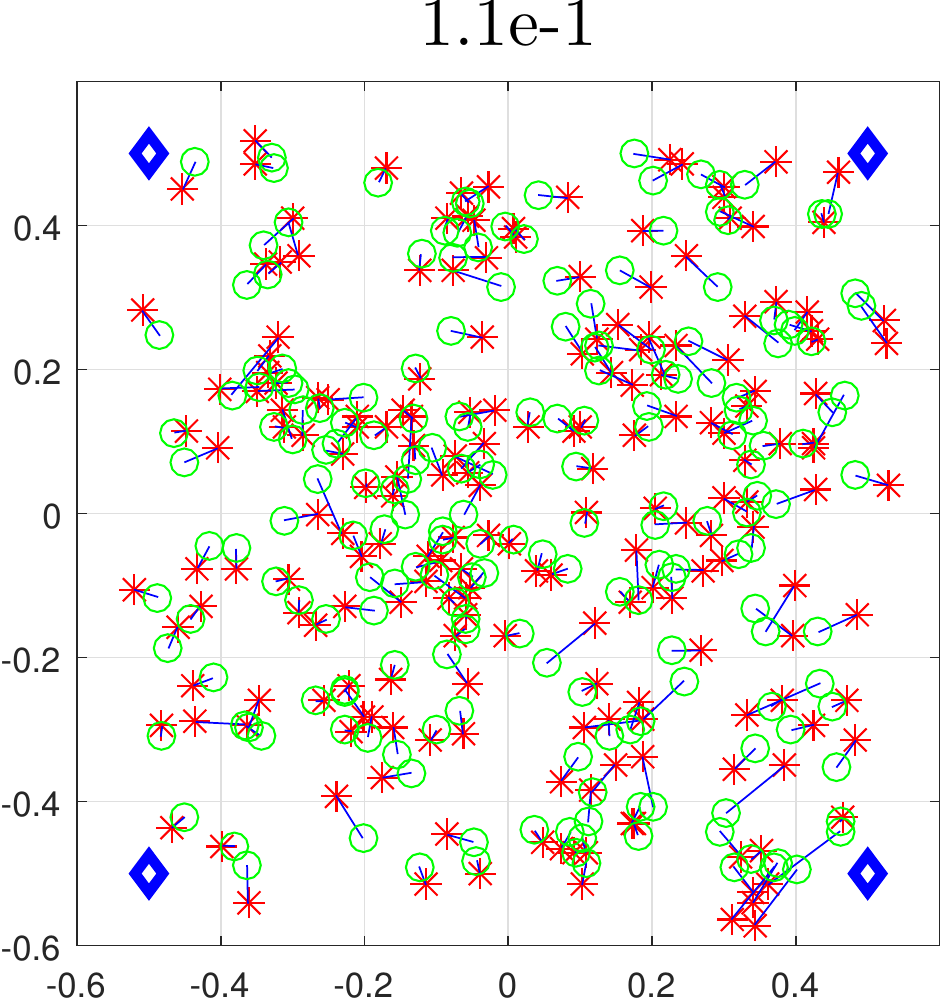}} \hspace{0.2mm}
	\subfloat[SNLDR.]{\includegraphics[width= 0.24 \linewidth]{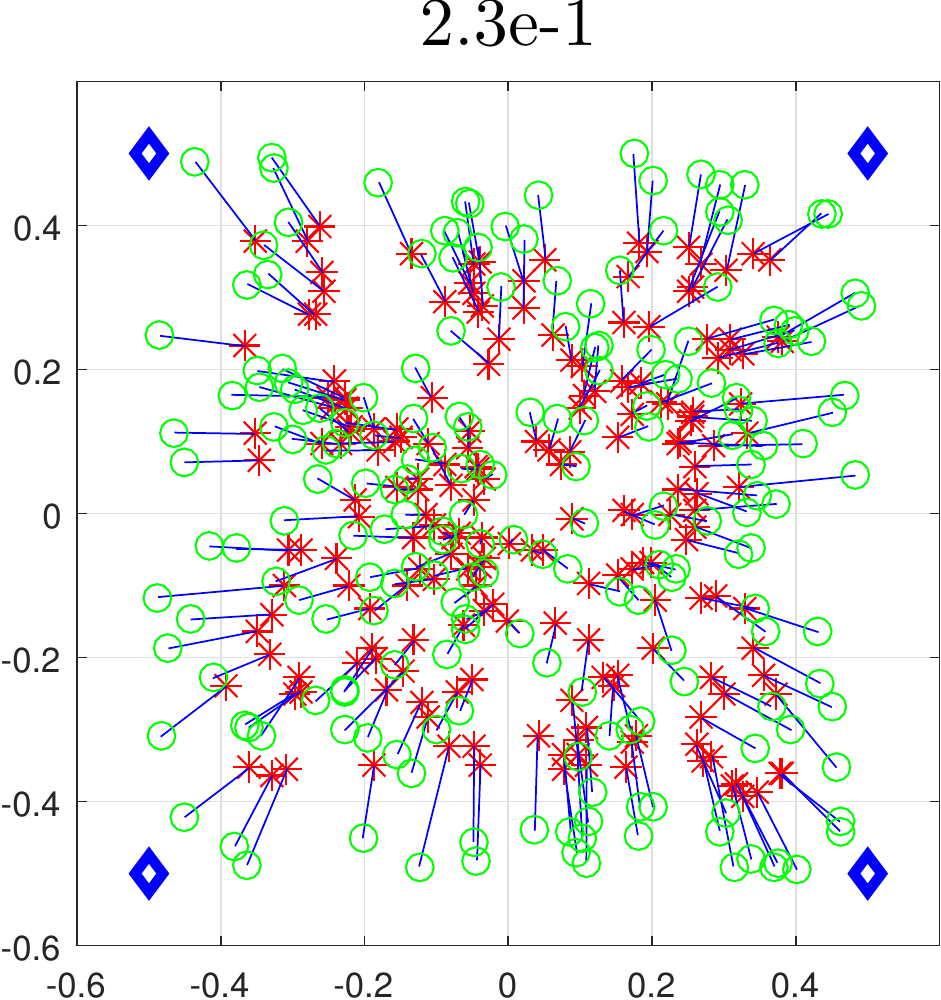}}
	\caption{Comparison of the proposed algorithm with  \cite{BLTYW2006,WZYB2008,SXG2015}. We placed $4$ anchors  at $(\pm 0.5, \pm 0.5)$. The parameters for the RGG are $N = 200, r=0.28$ and noise level was set as $\eta =0.1$. The ANE for each algorithm is mentioned at the top of the plot. Green circles (\textcolor{green}{$\circ$}) denote original sensor locations, red stars (\textcolor{red}{$\star$}) denote estimated locations, and blue diamonds (\textcolor{blue}{$\Diamond$}) denote anchor locations.}
	\label{ANE}
\end{figure}

\noindent \textbf{\underline{Experiment 9}:} We repeat Experiment 3 with $500$ sensors and $10$ anchors (with $4$ of them placed at  $(\pm 0.5, \pm 0.5)$). The localizations obtained using \texttt{ESDP} \cite{WZYB2008}, \texttt{SNLDR} \cite{SXG2015}, \texttt{SNLSDP} \cite{BLTYW2006} and the proposed method are shown in Figure \ref{ANE_deg}. For this instance of RGG, the average node degree is $13.2$ and the minimum node degree is $3$. We note that for both Experiments 3 and 4, the ANE for the proposed method is the least.

\begin{figure}[!htb]
	\centering
	\subfloat[Proposed.]{\includegraphics[width= 0.24 \linewidth]{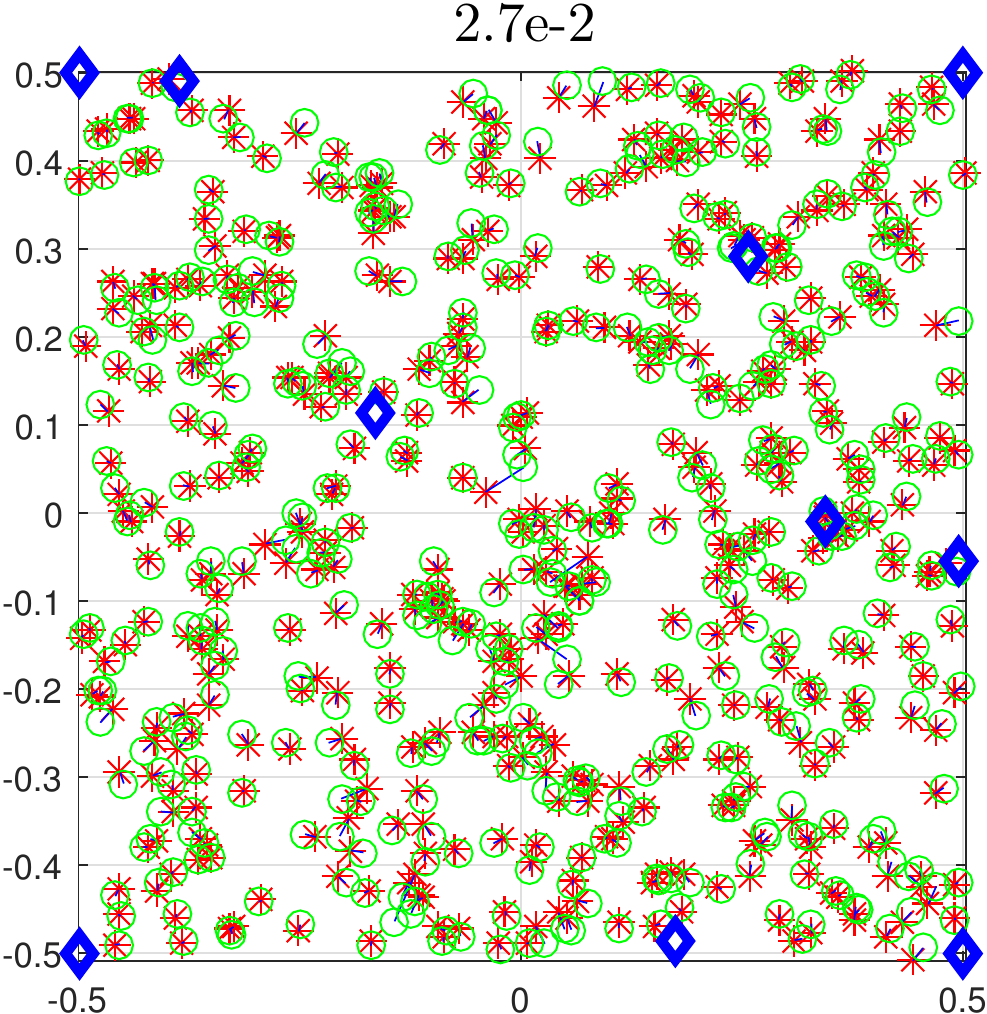}} \hspace{-0.5mm}
	\subfloat[SNLSDP.]{\includegraphics[width= 0.24 \linewidth]{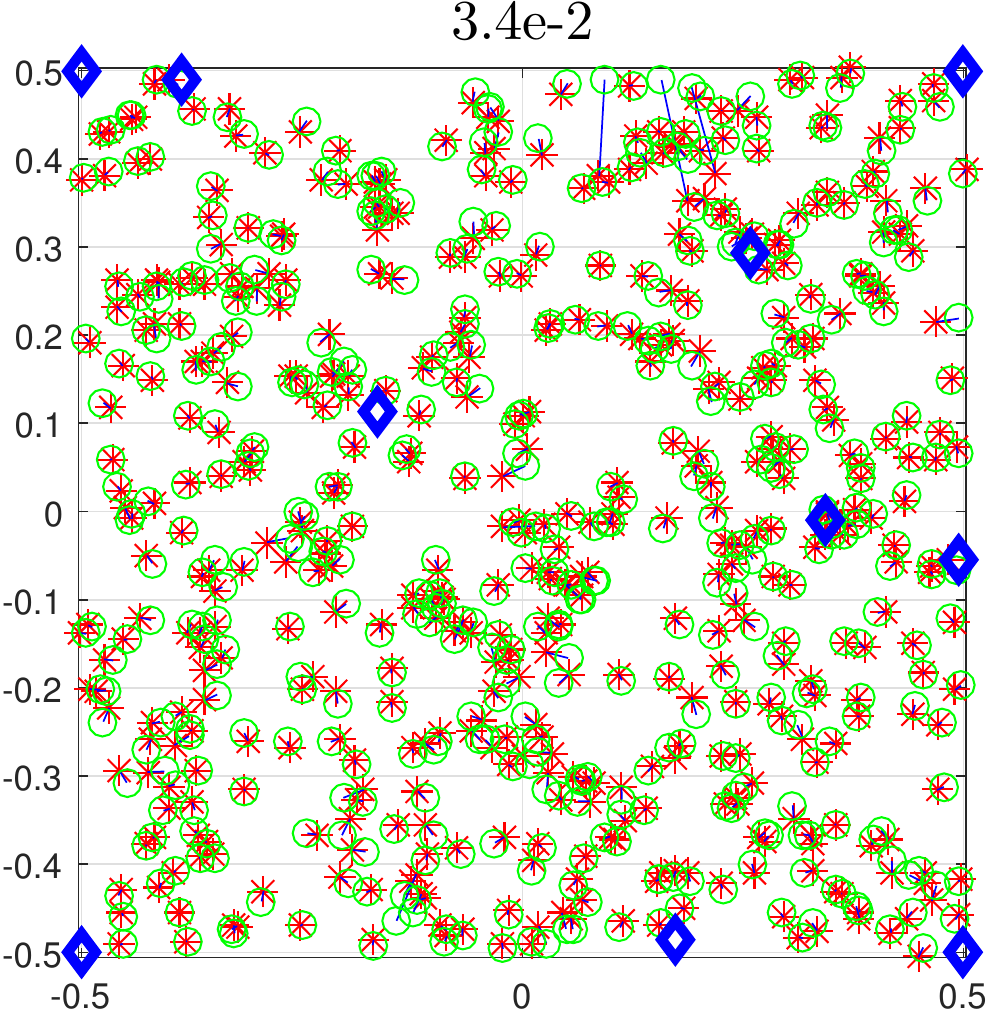}} \hspace{-0.5mm}
	\subfloat[ESDP.]{\includegraphics[width= 0.24 \linewidth]{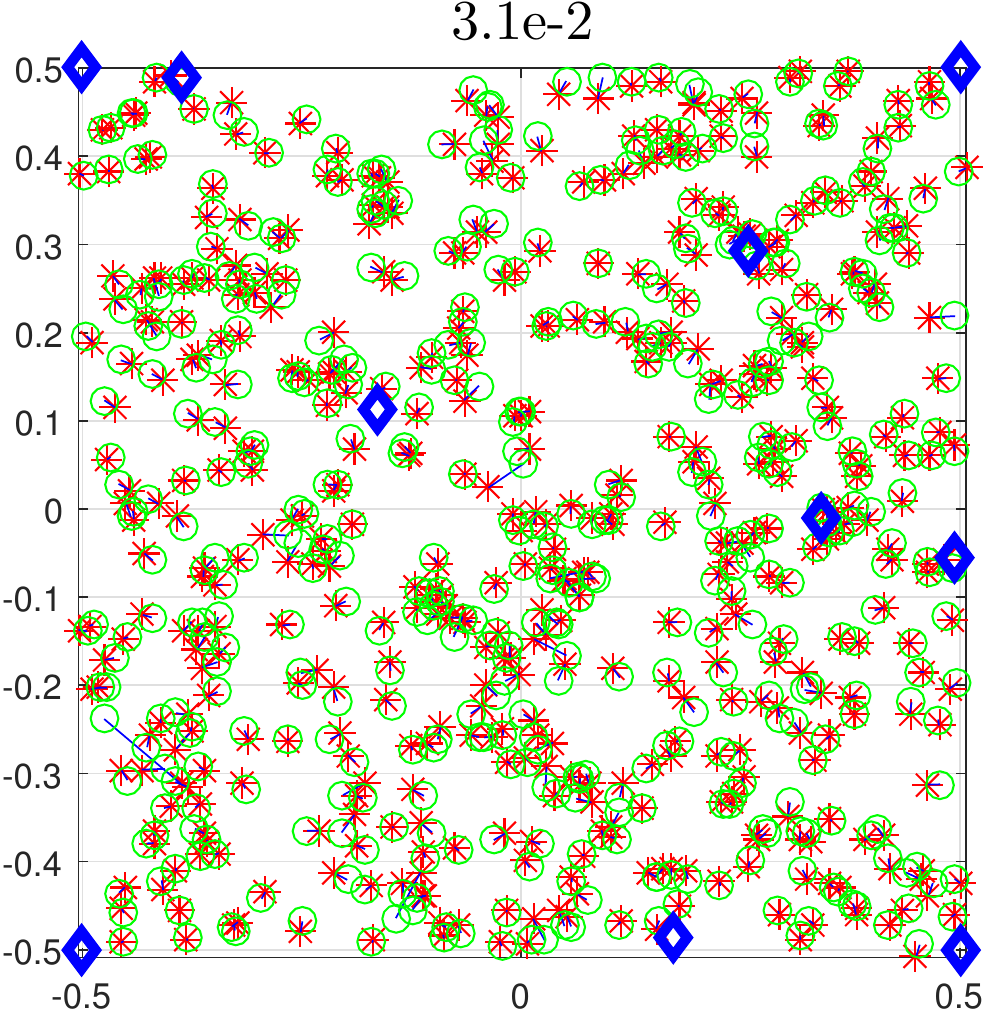}} \hspace{0.2mm}
	\subfloat[SNLDR.]{\includegraphics[width= 0.24 \linewidth]{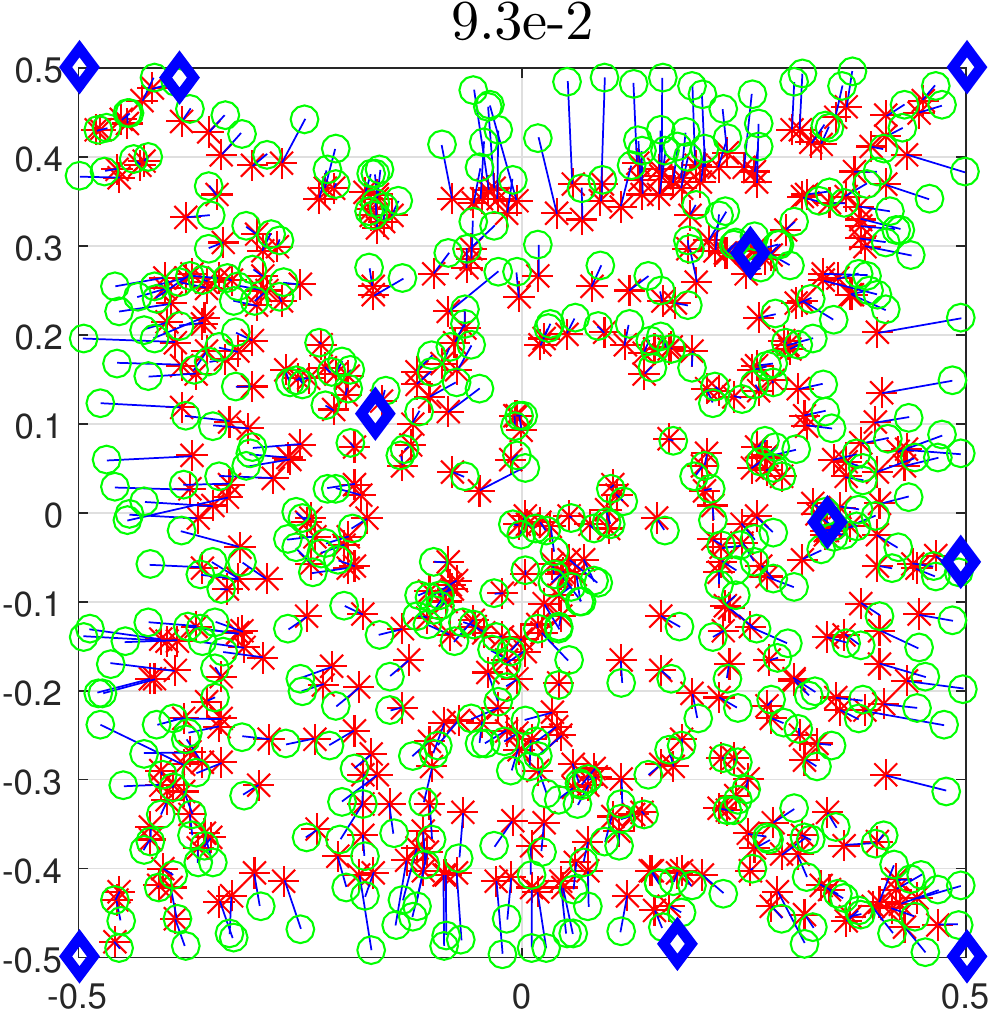}}
	\caption{Comparison of the proposed algorithm with \cite{BLTYW2006,WZYB2008,SXG2015}. We have used $10$ anchors of which $4$ of them are placed at $(\pm 0.5, \pm 0.5)$. The parameters used are $N = 500$ and $\eta =0.1$, and the average node degree is $13.2$.}
		\label{ANE_deg}
\end{figure}

\section*{Acknowledgements}
The authors wish to thank the editor and the anonymous reviewers for their thoughtful comments and suggestions. The authors also wish to thank Nicolas Gillis, Andrea Simonetto, Claudia Soares, and Arvind Agarwal for useful discussions and for providing the MATLAB code of their algorithms. 
\bibliographystyle{IEEEtran}
\bibliography{citations}
\end{document}